\newtheorem{prop}{Proposition}[section]
\newtheorem{lemma}[prop]{Lemma}
\newtheorem{thm}[prop]{Theorem}
\newtheorem{cor}[prop]{Corollary}
\theoremstyle{definition}
\newtheorem{defn}[prop]{Definition}
\newtheorem{rmk}[prop]{Remark}
\renewcommand{\tilde}{\widetilde}     
\newcommand{\ra}{\rightarrow}
\DeclareMathOperator{\Jac}{Jac}
\DeclareMathOperator{\Pic}{Pic}
\DeclareMathOperator{\Sym}{Sym}
\DeclareMathOperator{\Hom}{Hom}
\DeclareMathOperator{\Aut}{Aut}
\def\DM{\mathrm{DM}}
\def\CH{\mathrm{CH}} 
\def\CHM{\mathrm{CHM}}
\DeclareMathOperator{\eff}{eff}
\DeclareMathOperator{\SmProj}{SmProj}
\DeclareMathOperator{\codim}{codim}
\def\num{{\mathrm{num}}}
\def\cL{\mathcal L}
\def\cM{\mathcal M}\def\cN{\mathcal N}\def\cO{\mathcal O}
\def\cP{\mathcal P}
\def\CC{\mathbb C}
\def\GG{\mathbb G}
\def\LL{\mathbb L}
\def\NN{\mathbb N}
\def\PP{\mathbb P}\def\QQ{\mathbb Q}
\def\ZZ{\mathbb Z}
\def\fh{\mathfrak h}
\def\k{\mathbf k}
\newcommand{\flip}[2][]{\ext@arrow 3359\leftrightarrowfill@@{#1}{#2}}
\def\rightarrowfill@@{\arrowfill@@\relax\relbar\rightarrow}
\def\leftarrowfill@@{\arrowfill@@\leftarrow\relbar\relax}
\def\leftrightarrowfill@@{\arrowfill@@\leftarrow\relbar\rightarrow}
\def\arrowfill@@#1#2#3#4{%
  $\m@th\thickmuskip0mu\medmuskip\thickmuskip\thinmuskip\thickmuskip
   \relax#4#1
   \xleaders\hbox{$#4#2$}\hfill
   #3$%
}
\title[Motives of moduli spaces of rank 3 bundles]{Motives of moduli spaces of rank $3$ vector bundles and Higgs bundles on a curve}
\author{Lie Fu, Victoria Hoskins and Simon Pepin Lehalleur}
\thanks{\textit{2020 Mathematics Subject Classification:}  14H60, 14D20, 14C15, 14E05}
\thanks{\textit{Key words and phrases:} moduli spaces, vector bundles on curves, Higgs bundles, Chow motives, wall-crossing, flips and flops}
\thanks{L. F. is supported by the Radboud Excellence Initiative programme and the Agence Nationale de la Recherche (ANR), under project numbers ANR-16-CE40-0011 and ANR-20-CE40-0023.. S. P. L. is supported by The Netherlands Organisation for Scientific Research (NWO), under project number 613.001.752.}
\begin{document}

\maketitle

\begin{abstract}
We prove formulas for the rational Chow motives of moduli spaces of semistable vector bundles and Higgs bundles of rank $3$ and coprime degree on a smooth projective curve. Our approach involves identifying criteria to lift identities in (a completion of) the Grothendieck group of effective Chow motives to isomorphisms in the category of Chow motives. For the Higgs moduli space, we use motivic Bia{\l}ynicki-Birula decompositions associated with a scaling action, together with the variation of stability and wall-crossing for moduli spaces of rank $2$ pairs, which occur in the fixed locus of this action.
\end{abstract}

\tableofcontents

\section{Introduction}

Let $C$ be a smooth projective geometrically connected curve of genus $g\geq 1$ over a field $\k$. We assume that $C$ admits a degree $1$ line bundle. Let $\cN = \cN_C(n,d)$ (resp.~$\cM = \cM_C(n,d)$) denote the moduli space of semistable vector bundles (resp.~Higgs bundles) of rank $n$ and degree $d$ on $C$.  Throughout this paper we assume that $n$ and $d$ are coprime, so that semistability and stability coincide. The variety $\cN$ is smooth projective of dimension $n^2(g-1) +1$ and the variety $\cM$ is  smooth quasi-projective of $\dim \cM = 2 \dim \cN$.


The cohomology of both $\cN$ and $\cM$ have been extensively studied; different approaches to describe their various cohomological invariants should be both unified and refined by working with \textit{motivic invariants}, which encode finer invariants, like Hodge structures on cohomology groups and also algebro-geometric invariants such as Chow groups. Let us explicitly mention some motivic descriptions of these moduli spaces. The motivic Poincar\'{e} polynomial of the vector bundle moduli space $\cN$ was computed by del Ba\~no \cite{dB_motive_moduli_vb} using the geometric techniques of \cite{BGL}; the ideas in \cite{BGL} were also used to give formulas for the stack of vector bundles on $C$ in the Grothendieck ring of varieties \cite{BD} and in Voevodsky's triangulated category of motives over $\k$ with rational coefficients \cite{HPL_formula}. 

An algorithm for computing the class of the Higgs moduli space $\cM$ in the Grothendieck ring of varieties was described by Garc\'{i}a-Prada, Heinloth and Schmitt \cite{GPHS} using the Bia{\l}ynicki-Birula decomposition associated to the natural scaling action on $\cM$ considered by Hitchin \cite{Hitchin} in rank 2, Gothen \cite{Gothen} in rank 3, and Simpson \cite{Simpson}, together with variation of stability for chains of vector bundle homomorphisms. This was upgraded in \cite{HPL_Higgs} to a motivic argument in Voevodsky's triangulated category of motives with rational coefficients and, by \cite[Corollary 6.9]{HPL_Higgs}, the motive of $\cM$ is pure and lies in the tensor subcategory generated by the motive of $C$. 

In a recent paper \cite{FHPL-rank2}, we thoroughly studied the rank 2 case and gave formulas for the rational Chow motives of $\cN_C(2,d)$ and $\cM_C(2,d)$, as well as moduli spaces of parabolic bundles and parabolic Higgs bundles by using explicit descriptions of variation of stability as flips. 

In this paper, we proceed to rank 3 and give formulas for the rational Chow motives of $\cN=\cN_C(3,d)$ and $\cM=\cM_C(3,d)$ for $d$ coprime to $3$. We expect that similar techniques to \cite{FHPL-rank2} can be used to give formulas for the motives of moduli spaces of parabolic vector bundles.

\subsection{The motive of the vector bundle moduli space}

For an integer $d$ coprime to 3 and $\cL\in \Pic^d(C)$, we let $\cN_{\cL} = \cN_{C,\cL}(3,d)$ denote the moduli space of semistable vector bundles with determinant isomorphic to $\cL$. The rational Chow motive $\fh(\cN_{\cL})$ is abelian by \cite[Proposition 4.1]{FHPL-rank2}, and $\fh(\cN(3,d))\simeq \fh(\cN_{\cL}(3, d))\otimes \fh(\Jac(C))$ by \cite[Theorem 1.1]{FHPL-rank2}. Hence it suffices to give a formula for the motive $\fh(\cN_{\cL}(3, d))$.

\begin{thm}
	\label{thm:main thm N}
Assume that $C$ has a degree $1$ line bundle and $d$ is coprime to 3. For any $\cL\in \Pic^d(C)$, the rational Chow motive of $ \cN_{C,\cL}(3,d)$ is
\[ \fh(\cN_{C,\cL}(3,d))  \simeq \fh(C^{(g-1)}\times C^{(g-1)})(3g-3) \oplus \bigoplus_{\substack{k_1 + k_2 < 2g-2 \text{ or }\\ k_1+k_2=2g-2 \text{ and } k_1<g-1}} \fh(C^{(k_1)} \times C^{(k_2)}) \otimes L_{k_1,k_2},\]
where $L_{k_1,k_2}$ are sums of Tate twists given by  $L_{k_1,k_2}= \QQ(k_1 + 2k_2) \oplus \QQ(8g-8 -2k_1 -3k_2)$.
\end{thm}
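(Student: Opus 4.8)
The plan is to deduce Theorem~\ref{thm:main thm N} from the known formula for the \emph{class} of $\cN_{C,\cL}(3,d)$ in a completed Grothendieck group of effective Chow motives, and then to upgrade that $K_0$-identity to an isomorphism of Chow motives. First, since $\fh(\cN(3,d)) \simeq \fh(\cN_{C,\cL}(3,d)) \otimes \fh(\Jac(C))$ by \cite[Theorem 1.1]{FHPL-rank2} and $\fh(\Jac(C))$ is a direct sum of exterior powers of $\fh^1(C)$, it suffices to compute $\fh(\cN_\cL)$ for $\cN_\cL := \cN_{C,\cL}(3,d)$. The input for the class is the Harder--Narasimhan/Atiyah--Bott recursion, realized motivically via the Bifet--Ghione--Letizia--type presentation of the stack $\Bun_{3,d}$ and its semistable substacks (the method of del Ba\~no \cite{dB_motive_moduli_vb} using \cite{BGL}), together with the motivic formula for $\fh(\Bun_n)$ in terms of powers of the Lefschetz motive $\LL$ and the motivic zeta function $Z_C(t) = \sum_{k\ge 0} [\fh(C^{(k)})]\, t^k$. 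Feeding in the known rank $1$ and rank $2$ semistable classes and using that the semistable stack is a $\GG_m$-gerbe over the coarse moduli space (so $[\fh(\cN^{\mathrm{ss}}_{n,e})] = (\LL-1)\,[\fh(\Bun^{\mathrm{ss}}_{n,e})]$, using $\gcd(3,d)=1$) together with the fixed-determinant splitting, one obtains $[\fh(\cN_\cL)]$ as an explicit rational function of $\LL$ involving $Z_C(\LL^{-2})$ and $Z_C(\LL^{-3})$ in the completed Grothendieck group of effective rational Chow motives.

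Second, I would put this class into normal form. Using the functional equation of $Z_C$ --- equivalently the Macdonald-type relations, so that $C^{(k)}$ is a $\PP^{k-g}$-bundle over $\Jac(C)$ for $k\ge 2g-1$, with analogous relations for $g\le k\le 2g-2$ --- one rewrites the rational function of $\LL$ as a finite $\ZZ_{\ge0}$-linear combination of motives $\fh(C^{(k_1)}\times C^{(k_2)})\otimes\QQ(j)$ with $(k_1,k_2)$ in the fundamental domain $\{\,k_1+k_2<2g-2\,\}\cup\{\,k_1+k_2=2g-2,\ k_1<g-1\,\}$, plus a single boundary term at the self-dual pair $(g-1,g-1)$. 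Tracking the $\LL$-powers produces the twists $L_{k_1,k_2}=\QQ(k_1+2k_2)\oplus\QQ(8g-8-2k_1-3k_2)$; the involution $j\leftrightarrow 8g-8-j$ reflects Poincar\'e duality on $\cN_\cL$ (note $\dim\cN_\cL=8(g-1)$), the effectivity $8g-8-2k_1-3k_2\ge0$ on the fundamental domain uses $g\ge1$, and the drop in multiplicity at $(g-1,g-1)$ (a single copy of $\fh(C^{(g-1)}\times C^{(g-1)})(3g-3)$, not two) is the behaviour at the self-dual point of the functional equation. This is an elementary but delicate generating-function computation; in substance it reorganizes del Ba\~no's identity so that all coefficients are manifestly non-negative integers and all Tate twists manifestly effective.

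Third, I would lift the equality $[\fh(\cN_\cL)]=[M]$, with $M$ the effective Chow motive on the right-hand side of the theorem, to an isomorphism $\fh(\cN_\cL)\simeq M$. This is not a formal consequence, since $\CHM(\k)_\QQ$ is not semisimple and a class in $K_0$ does not determine an object; instead one applies the general lifting criterion developed in the paper. Its hypotheses hold here because (i) $\fh(\cN_\cL)$ is of abelian type by \cite[Proposition 4.1]{FHPL-rank2}, and so is $M$, since each $\fh(C^{(k)})$ is a direct summand of $\fh(C)^{\otimes k}$; and (ii) the presentation of $M$ is \emph{reduced}: the fundamental domain for $(k_1,k_2)$ together with effectivity of the twists ensures that distinct summands $\fh(C^{(k_1)}\times C^{(k_2)})\otimes\QQ(j)$ admit no common effective submotive, so no cancellation can occur, and the $K_0$-identity transports the data faithfully against the canonical Chow--K\"unneth grading carried by abelian-type motives. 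Under these conditions the criterion promotes the $K_0$-identity to the desired isomorphism.

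I expect the third step to be the main obstacle. One must ensure that the $K_0$-identity is genuinely rigid: that the partial-fraction reduction of the second step leaves no phantom cancellation, that the multiplicities produced are honest non-negative integers, and that the building blocks $\fh(C^{(k_1)}\times C^{(k_2)})\otimes\QQ(j)$ are pairwise orthogonal enough for the lifting criterion to apply on the nose. A secondary, purely technical, point is that the Harder--Narasimhan recursion lives in a \emph{completed} Grothendieck group ($\fh(\Bun_n)$ being an infinite sum), so one must check that the class of the projective variety $\cN_\cL$ lies in a controlled part of the completion and that the algebraic manipulations used (inverting $1-\LL^{-1}$, clearing the zeta factors) are legitimate there.
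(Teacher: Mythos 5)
Your overall strategy is exactly the one the paper follows: obtain the class of $\cN_{\cL}$ in $\widehat{K_0}(\CHM^{\eff}(\k,\QQ))$ in a manifestly positive, effective normal form, and then lift to $\CHM(\k,\QQ)$ using Kimura finite dimensionality (your hypothesis (i) is the right one, via \cite[Proposition 4.1]{FHPL-rank2}, together with \cite[Theorem 1.1]{FHPL-rank2} to reduce from $\cN$ to $\cN_{\cL}$). The one genuine gap is that your second step --- converting the Harder--Narasimhan/del Ba\~no rational function in $\LL$ and $Z_C$ into the stated $\ZZ_{\geq 0}$-linear combination over the fundamental domain, with the correct twists $L_{k_1,k_2}$ and the single self-dual term at $(g-1,g-1)$ --- is asserted but not carried out. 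This is not a routine partial-fraction manipulation: the HN recursion introduces genuine negative signs, and finding a positive rearrangement is precisely the ``difficult combinatorial problem'' that the paper does not solve itself but imports as the main theorem of Gomez--Lee \cite[Theorem 1.3]{GL}. As written, your proof rests entirely on this unproved identity; you should either cite it or actually perform the generating-function computation, which is the substance of an entire separate paper.

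Your third step also mislocates where the difficulty lies and what the lifting criterion requires. No ``reducedness'' of the presentation, pairwise orthogonality of the summands, or absence of common effective submotives is needed. The criterion (Proposition \ref{prop:UpgradeK0toCHM} / Corollary \ref{cor lifting effective identities}) only needs: both sides are honest effective Chow motives (this is where positivity of coefficients and effectivity of the twists enter --- one must have two actual objects, not virtual classes) and both are Kimura finite dimensional. The mechanism is that equality of classes in the $K_0$ of the \emph{semisimple} category of numerical motives forces an isomorphism of numerical motives, and the functor from Kimura finite dimensional Chow motives to numerical motives is full and conservative, so the isomorphism lifts. So your worries about ``phantom cancellation'' and ``faithful transport against a Chow--K\"unneth grading'' are not obstacles to be overcome; once the positive $K_0$-identity of step two is in hand, the lift is immediate.
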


This theorem upgrades a recent computation of the motivic Poincar\'{e} polynomial of $\cN_{C,\cL}(3,1)$ in the completion $\widehat{K_0}(\CHM^{\eff}(\k,\QQ))$ of the Grothendieck group of effective Chow motives over $\k$ along the ideal generated by the Lefschetz motive $\LL=\QQ(1)$ due to Gomez and Lee \cite{GL} to an isomorphism of Chow motives. In particular, Theorem \ref{thm:main thm N} gives information about the Chow groups of $\cN_{C,\cL}(3,d)$ and $\cN_C(3,d)$ (see \S \ref{subsec:Chowgroups} for some examples), which is not captured by the previous results on the motivic Poincar\'{e} polynomial.

In general, it is not known if either the map from isomorphism classes in $\CHM^{\eff}(\k,\QQ)$ to $K_{0}(\CHM^{\eff}(\k,\QQ))$ or the natural ring homomorphism $K_{0}(\CHM^{\eff}(\k,\QQ))\to \widehat{K_0}(\CHM^{\eff}(\k,\QQ))$ are injective. However, in the case of Kimura finite dimensional motives \cite{Kimura-FiniteDimension} (for example, abelian motives), we show that it is possible to lift identities in $\widehat{K_0}(\CHM^{\eff}(\k,\QQ))$ to isomorphisms in $\CHM(\k, \QQ)$ in $\S$\ref{sec lifting identities}. 

This strategy would theoretically enable one to obtain formulas for the Chow motives of $\cN_{C,\cL}(n,d)$ and $\cN_{C}(n,d)$ in higher ranks; however, one can only apply Corollary \ref{cor lifting effective identities} to lift identities in $\widehat{K_0}(\CHM^{\eff}(\k,\QQ))$ with \emph{positive coefficients} to $\CHM(\k,\QQ)$. Since the motivic Poincar\'{e} polynomial of $\cN_{C,\cL}(n,d)$ is computed in \cite{dB_motive_moduli_vb} using a Harder--Narasimhan recursion which involves introducing negative signs, it remains to write these as positive identities in order to obtain corresponding isomorphisms of Chow motives. This appears to be a difficult combinatorial problem in general, which was solved by Gomez and Lee in \cite{GL} in rank 3.

\subsection{The motive of the Higgs moduli space}

To study the Chow motive of the Higgs moduli space $\cM =\cM_C(3,d)$ we use the motivic Bia{\l}ynicki-Birula decomposition associated to the $\GG_m$-action on the Higgs moduli space given by scaling the Higgs field. In rank $3$, this idea was used by Gothen \cite{Gothen} to compute the Poincar\'{e} polynomial of $\cM$ (and of the moduli space of Higgs bundles with fixed determinant). The motive of $\cM$ is expressed in terms of Tate twists of the motives of the fixed components, which come in four types: one fixed components is $\cN$ where the Higgs field is zero, then there are components of Type (1,1,1) where the underlying bundle decomposes as a sum of line bundles $E = L_1 \oplus L_2 \oplus L_3$ each of which is sent to the next via the Higgs field, and finally there are components of Type (1,2) and Type (2,1) where $E$ decomposes as $E = L \oplus F$ and $E = F \oplus L$ respectively for a line bundle $L$ and the Higgs field maps the first factor to the second. 

Gothen showed that the components of Type (1,2) and (2,1) are related to moduli spaces of pairs consisting of a rank $2$ vector bundle and a non-zero section which are semistable with respect to an appropriate stability parameter. The variation of stability for these  moduli spaces  of rank $2$ pairs was studied by Thaddeus \cite{Thaddeus_pairs}, where he explicitly described the birational wall-crossing transformations as standard flips (or flops). From this description, it is straightforward to compute the motivic Poincar\'{e} polynomial of moduli spaces of rank $2$ pairs which are semistable with respect to a generic stability parameter (i.e. where semistability and stability coincide). However, in general, these wall-crossing formulas involve negative signs. At this point, we again employ the strategy of $\S$\ref{sec lifting identities}: we express the motivic Poincar\'{e} polynomial of the  moduli spaces of rank $2$ pairs that we are interested in as a \textit{positive} combination of motives, so that it can be lifted to an isomorphism in $\CHM(\k,\QQ)$. In fact, we give two different formulas for the rational Chow motives of these moduli spaces of rank $2$ degree $e$ pairs $\cP_e^i$ (here $i$ indexes the chamber in which we take our stability parameter). Let us state the more compact geometric formulation here, which expresses the Chow motives of pair moduli spaces in terms of motives of symmetric powers of $C$ and $\Jac(C)$; we refer the reader to Corollary \ref{cor Chow motives pairs} for the alternative formula in terms of $\fh^1(C)$. For a polynomial with positive integral coefficients  $Q(T)=\sum_k a_k T^k\in \NN[T]$, we define the Tate motive $Q(\QQ(1)):=\bigoplus_k \QQ(k)^{\oplus a_k}\in \CHM(\k, \QQ)$. 

\begin{thm}\label{main thm P}
Assume that $2i<e\leq 4g-5$. Then the rational Chow motive of the moduli space $\cP_e^i$ of rank $2$ degree $e$ pairs with stability given by the $i$th chamber is computed as follows:
	\begin{enumerate}[label=\emph{(\roman*)}, leftmargin=0.7cm]
		\item If $3i< e+g$, then \[ \fh(\cP^i_{e}) \simeq \bigoplus_{k=0}^i \fh(\Jac(C)\times C^{(k)}\times \PP^{e+g-3k-2})(k) .\]
		\item If $3i\geq e+g$, then 
			\begin{align*}
		\fh(\cP_e^i)\simeq & \: \fh(\Jac(C)\times \PP^{e-2g+1}\times C^{(g-1)})(g-1)\oplus\bigoplus_{k=0}^{2g-3-i}\fh(\Jac(C)\times C^{(k)}\times \PP^{e+g-3k-2})(k)\\
		&\oplus \bigoplus_{k=2g-2-i}^{g-2}\fh(\Jac(C)\times C^{(k)}\times \PP^{e-2g+1})\otimes(\QQ(3g-3-2k)+\QQ(k))\\
		&\oplus\fh(\Jac(C)^2)\otimes Q_{i,e,g}(\QQ(1)),
		\end{align*}
		where $Q_{i,e,g}(T)\in \NN[T]$ is defined as follows
		\begin{equation*}
		Q_{i, e, g}(T):= \frac{(T^g-T^{e+g-1-2i})(1-T^{i-g+1})(1-T^{i-g+2})}{(1-T)^2(1-T^2)}.
		\end{equation*}
	\end{enumerate}
\end{thm}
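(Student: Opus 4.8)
The plan is to reduce the statement to an identity of classes in the completed Grothendieck group $\widehat{K_0}(\CHM^{\eff}(\k,\QQ))$ and then invoke the lifting results of \S\ref{sec lifting identities}. The space $\cP_e^i$ is a smooth projective variety (the stability parameter being generic) and, by Thaddeus's description of variation of stability recalled below, its motive is assembled from projective bundles over products $C^{(k)}\times\Jac(C)$ by a sequence of standard flips; since $\fh(C^{(k)})$ is a direct summand of $\fh(C)^{\otimes k}$ and $\fh(C)=\QQ\oplus\fh^1(C)\oplus\LL$, the motive $\fh(\cP_e^i)$ is abelian, hence Kimura finite dimensional. The right-hand sides of (i) and (ii) are manifestly effective (symmetric powers, Jacobians, projective spaces and non-negative Tate twists), once one knows $Q_{i,e,g}\in\NN[T]$, which we verify. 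Therefore, by Corollary \ref{cor lifting effective identities}, it suffices to prove that the classes of the two sides agree in $\widehat{K_0}(\CHM^{\eff}(\k,\QQ))$.

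Next I would compute $[\fh(\cP_e^i)]$ by wall-crossing. By Thaddeus \cite{Thaddeus_pairs}, the spaces $\cP_e^j$ for $j\geq 0$ are linked by standard flips, with $\cP_e^0$ a $\PP^{e+g-2}$-bundle over $\Pic^e(C)\cong\Jac(C)$, so $[\fh(\cP_e^0)]=[\fh(\Jac(C))]\,[\PP^{e+g-2}]$; crossing the $j$-th wall replaces a $\PP^{a_j}$-subbundle $Z_j^-$ over $W_j:=C^{(j)}\times\Jac(C)$ by a $\PP^{b_j}$-subbundle $Z_j^+$ over the same base, where Thaddeus's dimension count gives $a_j=j-1$ and $b_j=e+g-2j-2$. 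Applying the standard-flip relation in $K_0$ (see \cite{FHPL-rank2} and \S\ref{sec lifting identities}) together with the projective bundle formula, one obtains
\[ [\fh(\cP_e^i)]=[\fh(\Jac(C))]\,\tfrac{1-\LL^{e+g-1}}{1-\LL}+\sum_{k=1}^{i}[\fh(C^{(k)}\times\Jac(C))]\,\tfrac{\LL^{k}-\LL^{e+g-2k-1}}{1-\LL}. \]
If $3i<e+g$ (case (i)), then $k\leq e+g-2k-1$ for every $1\leq k\leq i$, so each summand equals $[\fh(C^{(k)}\times\Jac(C))]\,[\PP^{e+g-3k-2}]\,\LL^{k}$, and the displayed class is exactly that of the right-hand side of (i) (with $C^{(0)}=\mathrm{pt}$); combined with the first paragraph this proves (i), and also yields the companion $\fh^1(C)$-formula of Corollary \ref{cor Chow motives pairs}.

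For case (ii), $3i\geq e+g$, the summands with $3k>e+g-1$ contribute the negative segments $-(\LL^{e+g-2k-1}+\dots+\LL^{k-1})[\fh(W_k)]$, so the class is no longer in manifestly positive form. To rewrite it I would use the motivic structure of the symmetric powers: for $n\geq 2g-1$ one has $[\fh(C^{(n)})]=[\fh(\Jac(C))]\,[\PP^{n-g}]$, and for smaller $n$ the motivic decomposition of $\fh(C^{(n)})$ in terms of the $\Lambda^p\fh^1(C)$ (Macdonald, del Ba\~no), together with Serre duality $\Lambda^{2g-p}\fh^1(C)\simeq\Lambda^{p}\fh^1(C)\otimes\LL^{g-p}$, allows one to trade the negative Lefschetz segments for effective ones, at the cost of producing multiples of $[\fh(\Jac(C))]$ and of $[\fh(\Jac(C))]^{2}$. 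A careful but elementary regrouping, organised around $k=g-1$ and the range $2g-2-i\leq k\leq g-2$, collapses the sum into the four families of terms in (ii), with $Q_{i,e,g}(T)$ being precisely the generating polynomial of the resulting $[\fh(\Jac(C))]^{2}$-part; one then checks that the stated rational expression lies in $\NN[T]$ using $2i<e\leq 4g-5$. This establishes the class identity in $\widehat{K_0}$, and case (ii) follows.

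The main obstacle is this last step. Unlike case (i), there is no canonical positive form for $[\fh(\cP_e^i)]$, and one must discover the correct regrouping and, crucially, verify that \emph{every} coefficient it produces — in particular those packaged by $Q_{i,e,g}$ — is non-negative, since only then does Corollary \ref{cor lifting effective identities} apply; this positivity is exactly ``the strategy of \S\ref{sec lifting identities}'' carried out for rank-$2$ pairs, and it is where the hypotheses $2i<e$ and $e\leq 4g-5$ enter essentially. By contrast, the standard-flip relation, the projective bundle formula, and Kimura finiteness are used as black boxes.
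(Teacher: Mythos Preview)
Your overall framework matches the paper: compute $\chi(\cP_e^i)$ via Thaddeus's wall-crossing (this is Lemma \ref{lemma:MotivePairK0}), observe that $\fh(\cP_e^i)$ is abelian hence Kimura finite dimensional, and then lift a positive $K_0$-identity via Corollary \ref{cor lifting effective identities}. Case (i) is exactly the paper's Corollary \ref{cor:MotivePairSmalli}.

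For case (ii), however, the mechanism you sketch does not lead to the formula you are asked to prove. You propose to expand each $\fh(C^{(k)})$ in terms of the pieces $\Sym^p\fh^1(C)$ and use K\"unnemann's duality $\Sym^{2g-p}\fh^1(C)\simeq\Sym^{p}\fh^1(C)(g-p)$. That is precisely the route of the paper's \S4.3, and it produces the \emph{other} formula, Corollary \ref{cor Chow motives pairs}, written in terms of $\Sym^b\fh^1(C)$ (with positivity controlled by the separate inequality of Lemma \ref{lemma:PositiveCoeff}). It does not naturally collapse back into the $C^{(k)}$-and-$\Jac(C)$ shape of Theorem \ref{main thm P}(ii). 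Note also that your appeal to $[\fh(C^{(n)})]=[\fh(\Jac(C))]\,[\PP^{n-g}]$ for $n\geq 2g-1$ is never relevant here: under $e\leq 4g-5$ one has $i\leq 2g-3$, so every index $k$ in the wall-crossing sum satisfies $k\leq 2g-3$.

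The paper's actual proof of (ii) hinges on a single identity you do not mention, Lemma \ref{lemma:ReduceSymmetricPowers} (from Gomez--Lee):
\[
\chi(C^{(j)})=\chi(C^{(2g-2-j)})\,\LL^{j+1-g}+\chi(\Jac(C))\,\chi(\PP^{j-g})\qquad (g\leq j\leq 2g-2).
\]
Since $i\leq 2g-3$, the only ``bad'' summands in the wall-crossing sum are those with $g\leq k\leq i$, all in the range of this identity. Applying it to those terms reflects $C^{(k)}$ into $C^{(2g-2-k)}$ with $2g-2-k\in[2g-2-i,\,g-2]$ and spins off a $\Jac(C)^{2}$-contribution; a direct regrouping (Proposition \ref{prop:MotivePairK0ter}) then yields exactly the four blocks in the statement, with $Q_{i,e,g}$ arising as the $\Jac(C)^{2}$-coefficient and its positivity immediate from $2i<e$. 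So the missing ingredient in your sketch is this reflection identity for symmetric powers in the intermediate range $g\leq j\leq 2g-2$; once you have it, the ``careful regrouping'' you allude to becomes a short computation rather than a discovery.
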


Finally, here is the formula we then obtain for the motive of moduli spaces of  rank 3 Higgs bundles. Recall that the Voevodsky motive of this quasi-projective moduli space turns out to be pure (\cite[Corollary 6.9]{HPL_Higgs}), thus it makes sense to speak of its Chow motive.

\begin{thm}
	\label{main thm H}
Assume that $C$ has a degree $1$ line bundle and $d$ is coprime to 3. The rational Chow motive of the Higgs moduli space $\cM_C(3, d)$ is given by the following expression.
		\begin{align*}
	\fh(\cM_C(3,d))\simeq & \: \fh(\cN_C(3,d)) \oplus \bigoplus_{l=0}^{g-2}\fh(\Jac(C))\otimes \left[\fh(\cP_{4g-3l-6}^{2g-2l-4})(2g+3l) \oplus \fh(\cP_{4g-3l-5}^{2g-2l-3})(2g+3l-1) \right] \\
	&\oplus \bigoplus_{\substack{(m_1,m_2) \in \NN^2\\2m_1+m_2<6g-6\\2m_2+m_1<6g-6\\m_2\equiv m_1+1 \operatorname{mod} 3}}\fh(\Jac(C))\otimes\fh(C^{(m_1)}\times C^{(m_2)})(8g-8-m_1-m_2).
	\end{align*}
where $\fh(\cN_C(3,d)) \simeq \fh(\cN_{C,\cL}(3,d)) \otimes \fh(\Jac(C))$ and the Chow motives of the moduli space $\cN_{C,\cL}(3,d)$ of vector bundles with fixed determinant and the pair moduli spaces $\cP^i_e$ appearing here are calculated by Theorem \ref{thm:main thm N} and Theorem \ref{main thm P} (or Corollary \ref{cor Chow motives pairs}) respectively.
\end{thm}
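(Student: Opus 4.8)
The plan is to apply the motivic Białynicki--Birula (BB) decomposition to the scaling action on $\cM=\cM_C(3,d)$ and to identify the motives of the fixed components using Theorems~\ref{thm:main thm N} and~\ref{main thm P}. The group $\GG_m$ acts on $\cM$ by $t\cdot(E,\phi)=(E,t\phi)$; this action is semiprojective, since $\lim_{t\to 0}(E,t\phi)$ exists in $\cM$ for every point and the fixed locus is projective (cf.~\cite{Simpson}). Hence $\cM$ is the disjoint union of the attracting sets $X_F^+=\{x:\lim_{t\to 0}t\cdot x\in F\}$ of the connected components $F$ of the fixed locus, each $X_F^+\to F$ being an affine bundle. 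Following \cite{HPL_Higgs}, this produces a motivic BB decomposition, and — since, as will be seen, every fixed component has abelian (in particular Kimura finite) Chow motive — the associated triangles split, yielding an isomorphism of Chow motives
\[ \fh(\cM)\;\simeq\;\bigoplus_F \fh(F)(c_F),\qquad c_F:=\codim_\cM X_F^+=\rk\big(N_{F/\cM}^{<0}\big), \]
where $N_{F/\cM}^{<0}$ is the subbundle of the normal bundle on which $\GG_m$ acts with negative weight. (That $\fh(\cM)$ is well defined as a Chow motive is the purity statement \cite[Corollary~6.9]{HPL_Higgs}.) It then remains to list the fixed components, compute the twists $c_F$, and compute $\fh(F)$.

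For the first two tasks I would appeal to Gothen's analysis \cite{Gothen}. A $\GG_m$-fixed Higgs bundle is a system of Hodge bundles $E=\bigoplus_j E_j$ with $\phi(E_j)\subseteq E_{j+1}\otimes K$, $K=\omega_C$; in rank $3$ the components are of four types:
\begin{enumerate}[label=\emph{(\alph*)}]
\item $\phi=0$, so $F=\cN_C(3,d)$; here the attracting set is the open dense locus of semistable bundles, so $c_F=0$;
\item Type $(1,1,1)$: $E=L_1\oplus L_2\oplus L_3$ with $\phi$ given by nonzero $\phi_1\colon L_1\to L_2\otimes K$ and $\phi_2\colon L_2\to L_3\otimes K$;
\item Type $(1,2)$: $E=L\oplus F_0$ with $L$ a line bundle, $F_0$ of rank $2$, and $\phi$ a nonzero map $L\to F_0\otimes K$;
\item Type $(2,1)$: $E=F_0\oplus L$ with $F_0$ of rank $2$, $L$ a line bundle, and $\phi$ a nonzero map $F_0\to L\otimes K$.
\end{enumerate}
The twists $c_F$ are the dimensions of the negative-weight parts of the first hypercohomology of the deformation complex $\End(E)\xrightarrow{[\phi,\,\cdot\,]}\End(E)\otimes K$; these are computed by Gothen and come out as $c_F=8g-8-m_1-m_2$ in case (b) (notation below) and $c_F=2g+3l$, resp.~$2g+3l-1$, for the type $(1,2)$, resp.~$(2,1)$, component indexed by $l\in\{0,\dots,g-2\}$, as in the statement.

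Next I would compute the motive of each component. For (a) one uses Theorem~\ref{thm:main thm N} together with $\fh(\cN_C(3,d))\simeq\fh(\cN_{C,\cL}(3,d))\otimes\fh(\Jac(C))$ \cite[Theorem~1.1]{FHPL-rank2}. For (b), writing $d_i=\deg L_i$, $D_1,D_2$ for the zero divisors of $\phi_1,\phi_2$, and $m_1=\deg D_1=d_2+2g-2-d_1$, $m_2=\deg D_2=d_3+2g-2-d_2$, the chain is recovered up to isomorphism from the triple $(L_2,D_1,D_2)$ via $L_1=L_2\otimes K(-D_1)$, $L_3=L_2\otimes K^{-1}(D_2)$, so $F\simeq\Jac(C)\times C^{(m_1)}\times C^{(m_2)}$; the constraints $m_i\geq 0$ and the semistability inequalities $2m_1+m_2<6g-6$, $m_1+2m_2<6g-6$ (which are exactly $\mu(L_2\oplus L_3)<\mu(E)$ and $\mu(L_3)<\mu(E)$), together with $m_2-m_1=d-3d_2\equiv d\pmod 3$ — and the reduction to $d\equiv 1\pmod 3$ using $\cM_C(3,d)\simeq\cM_C(3,d+3)\simeq\cM_C(3,-d)$ — give precisely the index set in the theorem. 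For (c), by \cite{Gothen} the type $(1,2)$, resp.~$(2,1)$, component with $\deg L$ fixed is isomorphic to $\Jac(C)\times\cP^i_e$: the factor $\Jac(C)$ records $L$, while the remaining rank-$2$ datum — a rank $2$ bundle together with the nonzero section determined by $\phi$ — is parametrised by the moduli space $\cP^i_e$ of rank $2$ degree $e$ pairs semistable for Thaddeus' $i$th chamber, with $(e,i)=(4g-3l-6,\,2g-2l-4)$ in the type $(1,2)$ case and $(e,i)=(4g-3l-5,\,2g-2l-3)$ in the type $(2,1)$ case; one checks $2i<e\leq 4g-5$ so that Theorem~\ref{main thm P} (or Corollary~\ref{cor Chow motives pairs}) applies, and $0\leq l\leq g-2$ is exactly the range of $\deg L$ for which such a fixed component is nonempty. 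Substituting all of this into the displayed decomposition, and checking compatibility of the dimensions with the twists, yields the claimed formula.

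The assembly is routine; the hard part will be the bookkeeping in the previous two paragraphs. Gothen's description is recorded there only at the level of Betti numbers, and the bulk of the argument is to make it precise and motivic: to match each fixed component with the correct $\cN_{C,\cL}(3,d)$, symmetric product of $C$, or chamber moduli space $\cP^i_e$; to pin down the exact Tate twists $2g+3l$, $2g+3l-1$, $8g-8-m_1-m_2$ together with all index ranges (in particular checking the hypothesis $2i<e\leq 4g-5$ of Theorem~\ref{main thm P} for every $(e,i)$ that occurs); and to verify the semistability conditions for the holomorphic chains in type $(1,1,1)$ and for the rank-$2$ pairs in types $(1,2)$ and $(2,1)$, matching Higgs stability with the appropriate chamber. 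The only non-formal point in the BB step itself is the splitting of the BB triangles in $\CHM(\k,\QQ)$, which is guaranteed because every fixed-component motive — being built from $\fh^1(C)$, Jacobians, symmetric powers of $C$, and projective spaces — is abelian, hence Kimura finite.
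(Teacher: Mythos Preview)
Your proposal is correct and follows essentially the same approach as the paper: reduce to $d=1$ via dualising and tensoring, apply the motivic Bia{\l}ynicki--Birula decomposition for the semiprojective scaling action (as in \cite[Theorem~A.4(i)]{HPL_Higgs}), and then identify the fixed components and their Tate twists exactly as Gothen does---the paper records this in Corollaries~\ref{cor:FixLoci(1,1,1)}, \ref{cor:FixLoci(1,2)}, \ref{cor:FixLoci(2,1)} and Equations~\eqref{eq motive (1,2)}, \eqref{eq motive (2,1)}, which match your $(e,i)$ and twists on the nose. The only remark is that your justification for the splitting of the BB decomposition via Kimura finiteness is not needed: the cited result already gives a direct-sum isomorphism of motives for semiprojective actions (using the Lagrangian/half-dimensional structure of the downward flow to get $c_F=\tfrac{1}{2}\dim\cM-\dim F$), so no extra splitting argument is required.
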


By plugging in the formulas from Theorem \ref{thm:main thm N} and Theorem \ref{main thm P}, we see that the motive of $\cM_C(3,d)$ is expressed in terms of motives of $\Jac(C)$ and symmetric powers of $C$.

Theorem \ref{main thm H} can be viewed as a lifting of the formula of Garc\'ia-Prada--Heinloth--Schmitt \cite[\S 8]{GPHS} for the class of $\cM$ in a certain completion of the Grothendieck ring of varieties. Actually, it takes some straightforward but tedious computation to see that our isomorphism indeed recovers the formula in \cite{GPHS}. 

We note that the relationship between the Higgs moduli space $\cM$ and the Higgs moduli space $\cM_{\cL}$ with fixed determinant is not as simple as the case for $\cN$ and $\cN_{\cL}$ given by \cite[Theorem 1.1]{FHPL-rank2}; this was already observed on the level of cohomology in rank $n =2$ by Hitchin \cite{Hitchin} and in rank 3 by Gothen \cite{Gothen}. In fact, in Proposition \ref{prop:MotiveHiggsFixDetNotGeneratedByC}, we show that for a general smooth projective complex curve $C$, the rational Chow motive of $\cM_{\cL,C}(n,d)$ for any $n$ and any $d$ coprime to $n$ is \emph{not} contained in the tensor subcategory generated by the motive of $C$.

Finally, we give some explicit formulas in low genus and applications to Chow groups in $\S$\ref{sec ex app}.

\subsection*{Notation} 
Throughout $C$ denotes a smooth projective geometrically connected curve over a field $\k$ which we assume admits a degree $1$ line bundle. We let $C^{(m)}$ denote the $m$-fold symmetric power of $C$ and let $\Jac(C)$ denote the Jacobian of $C$. 

We write $\CHM(\k,\QQ)$ for the category of Chow motives over $\k$ with coefficients in $\QQ$ and we follow a homological convention for morphisms. With this convention, we have a covariant functor $\fh:\SmProj_{\k}\to \CHM(\k,\QQ)$ associating to a smooth projective $\k$-variety $X$ its rational Chow motive $\fh(X)$. In particular, for a smooth $\k$-variety $X$ whose motive is pure, the rational Chow groups of $X$ can be computed as homomorphism groups in $\CHM(\k,\QQ)$ as follows
\begin{equation}\label{eq Chow gps as Hom gps}
\CH^i(X)_{\QQ}=\Hom_{\CHM(\k,\QQ)}(\fh(X), \QQ(i)).
\end{equation} 

Since the Higgs moduli space $\cM$ is only quasi-projective, the natural way to associate a motive to $\cM$ is via the triangulated category $\DM(\k,\QQ)$ of Voevodsky motives over $\k$ with rational coefficients. However, by \cite[Corollary 6.9]{HPL_Higgs}, this motive is pure and thus we can view it as a Chow motive by identifying $\CHM(\k,\QQ)$ with a full subcategory of $\DM(\k,\QQ)$ via the fundamental embedding theorem of Voevodsky \cite{VoevodskyBookChapter} (see \cite[$\S$2.2]{FHPL-rank2}).

\section{Lifting identities from the Grothendieck group to Chow motives}\label{sec lifting identities}

\begin{defn}
The \textit{motivic Poincar\'{e} polynomial} $\chi(X)$ of a smooth projective $\k$-variety $X$ is the image of the rational Chow motive $\fh(X)$ in the completion $\widehat{K_0}(\CHM^{\eff}(\k,\QQ))$ of the Grothendieck ring of effective rational Chow motives over $\k$ along the ideal generated by the Lefschetz motive $\LL=[\QQ(1)]$.
\end{defn}

This notion was introduced in \cite{dB_motive_moduli_vb}, because power series in $\LL$ occur naturally and unavoidably in his computations. In general, it is not known if either the natural  ring homomorphism $K_{0}(\CHM^{\eff}(\k,\QQ))\to \widehat{K_0}(\CHM^{\eff}(\k,\QQ))$ or the map from isomorphism classes in $\CHM^{\eff}(\k,\QQ)$ to $K_{0}(\CHM^{\eff}(\k,\QQ))$ are injective. However, with Kimura's assumption on finite dimensionality \cite{Kimura-FiniteDimension}, we have the following result (already appeared in \cite[Lemme 13.2.1.1]{AndreBook} and exploited in \cite[Theorem 4.3]{FHPL-rank2}).

\begin{prop}
	\label{prop:UpgradeK0toCHM}
Given effective Chow motives $N, M \in \CHM^{\eff}(\k,\QQ)$ which are Kimura finite dimensional and whose classes in $\widehat{K_0}(\CHM^{\eff}(\k,\QQ))$ coincide, there is an isomorphism $N \simeq M$ in $\CHM^{\eff}(\k,\QQ)$.
\end{prop}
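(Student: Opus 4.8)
The plan is to use the classical dévissage of the Grothendieck group together with Kimura's nilpotence theorem. First I would recall that for Kimura finite dimensional motives the key input is the following: if $f \colon M \to M$ is an endomorphism whose class in the numerical category (equivalently, whose trace data) is zero, then $f$ is nilpotent; more generally, any idempotent in $\End(M)$ lifts along the projection to the numerical quotient, and two idempotents that become conjugate numerically are already conjugate. Concretely, writing $A = \End_{\CHM^{\eff}(\k,\QQ)}(N \oplus M)$, the radical $\fr$ of $A$ (kernel of the map to the semisimple numerical algebra) consists of nilpotent elements because $N$ and $M$ are Kimura finite dimensional, so $A$ is a semiperfect ring and idempotents lift modulo $\fr$.

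Next I would reduce the statement about $\widehat{K_0}$ to a statement that can be attacked motive-by-motive. The hypothesis $[N] = [M]$ in $\widehat{K_0}(\CHM^{\eff}(\k,\QQ))$ unwinds, after choosing splittings into twists of the generators appearing in $N$ and $M$, to a finite system of equalities of effective motives of the form $[N'] = [M']$ in $K_0$ itself (the completion along $\LL$ only affects the bookkeeping of Tate twists, and effectivity lets one clear denominators and pass to the honest Grothendieck group on the relevant truncation). Then $[N'] = [M']$ in $K_0(\CHM^{\eff}(\k,\QQ))$ means there is an effective motive $P$ with $N' \oplus P \simeq M' \oplus P$: indeed $K_0$ of an additive category is the group completion of the monoid of isomorphism classes, so equality of classes is stable isomorphism.

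The crux is then a cancellation statement: if $N' \oplus P \simeq M' \oplus P$ in $\CHM^{\eff}(\k,\QQ)$ and all three are Kimura finite dimensional, then $N' \simeq M'$. This is exactly where finite dimensionality is essential — cancellation fails in general additive categories, but holds when the endomorphism rings are semiperfect (equivalently, when idempotents lift modulo a nil ideal), by the Krull–Schmidt–type argument: under these hypotheses $\CHM^{\eff}(\k,\QQ)$ restricted to the subcategory of Kimura finite objects has the property that every object decomposes into indecomposables with local endomorphism rings, so isomorphism classes are determined by multiplicities of indecomposables and cancellation is immediate. I expect this cancellation/Krull–Schmidt step to be the main obstacle, or rather the main thing requiring care: one must check that "Kimura finite dimensional" is closed under direct summands (true, by Kimura) so that the indecomposable constituents of $N'$, $M'$, $P$ are again finite dimensional, and one must invoke the nilpotence of the radical to get local endomorphism rings on indecomposables. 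The statement is already in the literature — André's book, Lemme 13.2.1.1, and it was used in the rank $2$ paper as Theorem 4.3 — so in the write-up I would simply cite those and indicate the Krull–Schmidt mechanism as above rather than reproving it.
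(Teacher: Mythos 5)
Your account of the Kimura--theoretic input is accurate: for a finite dimensional motive the kernel of $\End_{\CHM^{\eff}(\k,\QQ)}(N\oplus M)\to\End_{M^{\eff}_{\num}(\k,\QQ)}(N\oplus M)$ is nil, the endomorphism rings are semiperfect, idempotents lift, and Krull--Schmidt holds on the full subcategory of Kimura finite dimensional objects. The genuine gap is earlier, in your reduction from $\widehat{K_0}(\CHM^{\eff}(\k,\QQ))$ to $K_0(\CHM^{\eff}(\k,\QQ))$. Equality of $[N]$ and $[M]$ in the completion only says that $[N]-[M]$ lies in $(\LL)^n$ for every $n$; to descend to an equality in $K_0$ itself you need $\bigcap_n(\LL)^n=0$, i.e.\ injectivity of $K_0(\CHM^{\eff}(\k,\QQ))\to\widehat{K_0}(\CHM^{\eff}(\k,\QQ))$ --- and this is precisely one of the two things stated in \S\ref{sec lifting identities} to be unknown. (For Kimura finite dimensional classes this injectivity is essentially equivalent to the proposition you are proving, so assuming it is circular.) The phrase ``the completion only affects the bookkeeping of Tate twists'' has no content here: an element of $\widehat{K_0}$ does not have well-defined coefficients of an $\LL$-adic expansion, because the quotients $K_0/(\LL)^n$ may identify classes of non-isomorphic effective motives.

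The repair --- and this is the route the paper takes --- is to apply the ring homomorphism $\widehat{K_0}(\CHM^{\eff}(\k,\QQ))\to\widehat{K_0}(M^{\eff}_{\num}(\k,\QQ))$ \emph{first}. The numerical category is semisimple abelian by Jannsen, so its $K_0$ is free on the simple objects, one checks $\bigcap_n(\LL)^n=0$ there, and equality of classes of actual objects forces an isomorphism of numerical motives with no cancellation argument at all (multiplicities of simples are read off from $K_0$). One then lifts this isomorphism back to $\CHM(\k,\QQ)$ using that the projection functor is full and conservative on Kimura finite dimensional objects --- which is exactly where the nilpotence you invoke enters. A secondary problem with your route, even granting a reduction to $K_0$: the stable isomorphism $N'\oplus P\simeq M'\oplus P$ supplied by the group-completion description of $K_0$ gives no control over $P$, which need not be Kimura finite dimensional, so the Krull--Schmidt cancellation does not apply to it; you would have to argue separately that $P$ can be chosen finite dimensional, or again cancel after passing to the semisimple numerical category.
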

\begin{proof}
The equality in $\widehat{K_0}(\CHM^{\eff}(\k,\QQ))$ also determines a corresponding isomorphism in the category $M^{\eff}_{\num}(\k,\QQ)$ of effective numerical motives (because $M^{\eff}_{\num}(\k,\QQ)$ is semisimple \cite{Jannsen}, see the proof of \cite[Theorem 4.3]{FHPL-rank2}). In particular, the equality holds in $M_{\num}(\k,\QQ)$. Since the restriction of the functor from Chow motives to numerical motives to the subcategory of Kimura finite dimensional Chow motives is full and conservative (see \cite{AndreBourbaki}), we deduce that the isomorphism also holds in $\CHM(\k,\QQ)$. 
\end{proof}

We note that the above isomorphism is not explicit. More precisely, we can lift \emph{positive} identities in $\widehat{K_0}(\CHM^{\eff}(\k,\QQ))$ to isomorphisms in $\CHM(\k, \QQ)$, when the terms appearing are Kimura finite dimensional, as in the following corollary.

\begin{cor}\label{cor lifting effective identities}
Let $X$ be a smooth projective variety, whose Chow motive $\fh(X)$ is Kimura finite dimensional (for example, $\fh(X)$ is an abelian motive). Given any effective identity in $\widehat{K_0}(\CHM^{\eff}(\k,\QQ))$ expressing the motivic Poincar\'{e} polynomial $\chi(X)$ as a polynomial in $\LL$
\[ \chi(X) = \sum_{i = 0}^N \chi(X_i) \LL^{i} \] 
whose coefficients are motivic Poincar\'{e} polynomials of smooth projective varieties $X_i$ with $\fh(X_i)$ being Kimura finite dimensional, there is a corresponding isomorphism in $\CHM(\k,\QQ)$; that is,
\[ \fh(X) \simeq \bigoplus_{i=0}^N \fh(X_i)(i). \]
\end{cor}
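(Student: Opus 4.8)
The plan is to deduce Corollary \ref{cor lifting effective identities} directly from Proposition \ref{prop:UpgradeK0toCHM}. The hypothesis gives an identity of classes
\[ [\fh(X)] = \sum_{i=0}^N [\fh(X_i)] \cdot \LL^i \]
in $\widehat{K_0}(\CHM^{\eff}(\k,\QQ))$. I would first observe that the right-hand side is visibly the class of the effective Chow motive $M := \bigoplus_{i=0}^N \fh(X_i)(i)$, using that the class map $\CHM^{\eff}(\k,\QQ) \to K_0(\CHM^{\eff}(\k,\QQ))$ is additive on direct sums and multiplicative, so that $[\fh(X_i)(i)] = [\fh(X_i)] \cdot \LL^i$, and that these identities persist in the completion. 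Thus, setting $N' := \fh(X)$, the classes of $N'$ and $M$ coincide in $\widehat{K_0}(\CHM^{\eff}(\k,\QQ))$.

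The second step is to check the Kimura finite dimensionality hypothesis of Proposition \ref{prop:UpgradeK0toCHM} for both motives. For $N' = \fh(X)$ this is assumed. For $M = \bigoplus_{i=0}^N \fh(X_i)(i)$, I would use that the subcategory of Kimura finite dimensional motives is closed under direct sums and under Tate twists (the latter because $\QQ(1)$ is an invertible, hence odd or even — in fact even — finite dimensional object, and tensoring preserves finite dimensionality; see \cite{Kimura-FiniteDimension, AndreBourbaki}). Since each $\fh(X_i)$ is Kimura finite dimensional by hypothesis, so is each Tate twist $\fh(X_i)(i)$, and hence so is their finite direct sum $M$.

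With both hypotheses verified, Proposition \ref{prop:UpgradeK0toCHM} yields an isomorphism $N' \simeq M$ in $\CHM^{\eff}(\k,\QQ)$, that is,
\[ \fh(X) \simeq \bigoplus_{i=0}^N \fh(X_i)(i) \]
in $\CHM(\k,\QQ)$ (viewing $\CHM^{\eff}$ inside $\CHM$), which is the claim. The parenthetical remark that abelian motives are Kimura finite dimensional is standard: the motive of an abelian variety is finite dimensional by work of Kimura and Shermenev, and abelian motives are by definition built from these using sums, summands, tensor products and Tate twists, all of which preserve finite dimensionality.

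There is essentially no obstacle here beyond bookkeeping; the entire content is housed in Proposition \ref{prop:UpgradeK0toCHM}, and the only mild point to be careful about is that the class identity in $\widehat{K_0}$ really does match the class of the proposed direct sum $M$ — in particular that no cancellation or completion subtlety intervenes — which is immediate from additivity and multiplicativity of the class map together with the fact that the identity is given with \emph{positive} (effective) coefficients, so that both sides are genuinely classes of effective motives rather than formal differences.
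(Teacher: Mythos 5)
Your proof is correct and matches the paper's intended argument: the corollary is stated as an immediate consequence of Proposition \ref{prop:UpgradeK0toCHM} (the paper gives no separate proof), obtained exactly as you do by recognizing the right-hand side as the class of the effective motive $\bigoplus_{i=0}^N \fh(X_i)(i)$ and checking Kimura finite dimensionality, which is preserved under direct sums and Tate twists. No gaps.
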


This corollary also holds when $X$ and $X_i$ are varieties (not necessarily smooth or projective) whose motives are pure.

\section{Motives of moduli spaces of vector bundles of rank 3}

For two coprime positive integers $n$ and $d$, let $\cN = \cN_C(n,d)$ denote the moduli space of semistable vector bundles of rank $n$ and degree $d$ on $C$. If $\cL$ is a degree $d$ line bundle on $C$, we can also consider the moduli space $\cN_{\cL} = \cN_{C,\cL}(n,d)$ of rank $n$ semistable vector bundles with determinant $\cL\in \Pic^d(C)$. In \cite{FHPL-rank2}, we computed the rational motives of $\cN$ and $\cN_{\cL}$ for $n = 2$. In this section, we will consider the case of rank $n = 3$.

Our starting point is a formula for the motivic Poincar\'{e} polynomial $\chi(\cN_{\cL})$ recently established by Gomez and Lee \cite[Theorem 1.3]{GL}, which goes back to the work of del Ba\~no \cite{dB_motive_moduli_vb} on the motive of $\cN_{\cL}$.


\begin{thm}[Gomez--Lee] 
	Let $\cL$ be a degree 1 line bundle on $C$.
	The motivic Poincar\'{e} polynomial of $\cN_{\cL}(3,1)$ is as follows:
	\[ \chi(\cN_{\cL}(3,1))  = \chi(C^{(g-1)} \times C^{(g-1)}) \LL^{3g-3} + \sum_{\substack{k_1 + k_2 < 2g-2 \text{ or }\\ k_1+k_2=2g-2 \text{ and } k_1<g-1}} \chi(C^{(k_1)} \times C^{(k_2)}) \LL_{k_1,k_2}\]
	where $\LL_{k_1,k_2}= \LL^{k_1 + 2k_2} + \LL^{8g-8 -2k_1 -3k_2}$.
\end{thm}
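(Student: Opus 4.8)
The plan is to obtain this identity in $\widehat{K_0}(\CHM^{\eff}(\k,\QQ))$ by a purely formal rewriting of del Ba\~no's closed formula for $\chi(\cN_{\cL}(3,d))$; the only geometric input is the Harder--Narasimhan stratification, as used by del Ba\~no \cite{dB_motive_moduli_vb}. I would proceed in three steps.

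\emph{Step 1: from the stratification to a rational function.} Write $\beta_{n,d}$ for the class in $\widehat{K_0}$ of the stack of all rank $n$ degree $d$ bundles on $C$ and $\alpha_{n,d}$ for the class of its semistable open substack. The Harder--Narasimhan stratification gives
\[
\beta_{n,d}=\sum_{\substack{(n_1,d_1)+\cdots+(n_\ell,d_\ell)=(n,d)\\ d_1/n_1>\cdots>d_\ell/n_\ell}}\ \LL^{\sum_{i<j}\left(n_in_j(g-1)+n_id_j-n_jd_i\right)}\ \prod_{j=1}^{\ell}\alpha_{n_j,d_j},
\]
each stratum being an iterated affine bundle over the product of semistable stacks, and $\beta_{n,d}$ itself is explicit (see \cite{BD}) as a product of values of the motivic zeta function $Z(C,t)=\sum_{m\ge0}\chi(C^{(m)})t^m=\frac{P(t)}{(1-t)(1-\LL t)}$, where $P(t)=\sum_{i=0}^{2g}b_it^i$, $b_i=[\Lambda^i\fh^1(C)]$, and $P(1)=\chi(\Jac C)$. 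Specialising to $n=3$, $d=1$ (coprime to $3$, so semistability equals stability), del Ba\~no solves the recursion over the four ordered partitions $3=3,\,2{+}1,\,1{+}2,\,1{+}1{+}1$; after the standard normalisations relating the semistable stack to the moduli space (a $\GG_m$-gerbe) and to the fixed-determinant moduli space one gets $\chi(\cN_{\cL}(3,1))$ as a single rational function of $\LL$ with coefficients polynomial in the $b_i$, but involving many sign changes.

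\emph{Step 2: the structural identities that make it effective.} To convert this alternating expression into the asserted positive sum I would use: (a) the functional equation $b_{2g-i}=\LL^{g-i}b_i$ of the curve, equivalently $Z(C,\tfrac{1}{\LL t})=\LL^{1-g}t^{2-2g}Z(C,t)$; (b) the two-variable generating identity $\sum_{k_1,k_2\ge0}\chi(C^{(k_1)}\times C^{(k_2)})\,t_1^{k_1}t_2^{k_2}=Z(C,t_1)Z(C,t_2)$, which repackages double symmetric powers; (c) the reflection formula $\chi(C^{(m)})=\LL^{m-g+1}\chi(C^{(2g-2-m)})+\chi(\Jac C)\tfrac{\LL^{m-g+1}-1}{\LL-1}$ for $m\ge g-1$ (a formal consequence of (a)), which pushes symmetric powers of degree $\ge g$ back into the effective range $\le g-1$; and (d) the self-duality $\fh(\cN_{\cL}(3,1))^{\vee}(8g-8)\simeq\fh(\cN_{\cL}(3,1))$ of the moduli space in its dimension $8g-8$, which forces the Tate summand attached to $C^{(k_1)}\times C^{(k_2)}$ to be self-dual relative to the duality of $\fh(C^{(k_1)}\times C^{(k_2)})$ in dimension $k_1+k_2$ --- and $L_{k_1,k_2}=\QQ(k_1+2k_2)\oplus\QQ(8g-8-2k_1-3k_2)$ is precisely such a pair.

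\emph{Step 3: the combinatorial extraction, and the obstacle.} I would clear denominators and carry out a partial-fraction decomposition in $\LL^{-1},\LL^{-2},\LL^{-3}$ in the style of Zagier's and Laumon--Rapoport's analysis of the Harder--Narasimhan recursion, separating the ``stable'' residue, which produces the bulk of the answer, from the residues attached to the unstable strata; using (a)--(b) each residue is matched against a product $Z(C,t_1)Z(C,t_2)$, all negative contributions are shown to cancel in pairs, and the surviving rational function re-expands into the central term $\chi(C^{(g-1)}\times C^{(g-1)})\LL^{3g-3}$ together with, for each $(k_1,k_2)$ with $k_1+k_2<2g-2$, or with $k_1+k_2=2g-2$ and $k_1<g-1$, the term $\chi(C^{(k_1)}\times C^{(k_2)})(\LL^{k_1+2k_2}+\LL^{8g-8-2k_1-3k_2})$ --- the first twist arising directly from a product $Z(C,t_1)Z(C,t_2)$, the second after applying (c) to one symmetric-power factor of a term lying above the antidiagonal. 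The genuine difficulty --- the ``difficult combinatorial problem'' flagged after Corollary \ref{cor lifting effective identities}, which is the heart of \cite{GL} --- is exactly this extraction: proving that \emph{every} negative term cancels and pinning down the exact summation region, in particular the asymmetric refinement $k_1<g-1$ on the boundary line $k_1+k_2=2g-2$, which only appears once the direct and reflected contributions are reconciled and one insists the answer be effective. By contrast, the restriction to $d=1$ is harmless (any other degree coprime to $3$ yields an isomorphic moduli space after tensoring by a line bundle and/or dualising, which merely relabels the exponents $n_id_j-n_jd_i$), and $g\ge1$ is needed only so that $\cN_{\cL}$ is non-empty and the displayed ranges are non-degenerate. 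Finally, once established in $\widehat{K_0}(\CHM^{\eff}(\k,\QQ))$ the identity has exactly the positive shape to which Corollary \ref{cor lifting effective identities} applies --- symmetric powers of a curve have abelian, hence Kimura finite-dimensional, motives --- so it upgrades to the Chow-motive isomorphism of Theorem \ref{thm:main thm N}.
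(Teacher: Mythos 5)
This theorem is not proved in the paper at all: it is imported verbatim as \cite[Theorem 1.3]{GL}, and the paper's own text explicitly defers to Gomez--Lee for it, so there is no internal argument to compare yours against. That said, your sketch does correctly reconstruct the known route: del Ba\~no's Harder--Narasimhan recursion for the stack of bundles, the motivic zeta function $Z(C,t)$ and its functional equation, the reflection formula for $\chi(C^{(m)})$ with $m\geq g$ (which is exactly Lemma \ref{lemma:ReduceSymmetricPowers}), and the observation that Poincar\'e duality in dimension $8g-8$ forces the Tate twists to come in the self-dual pairs $\QQ(k_1+2k_2)\oplus\QQ(8g-8-2k_1-3k_2)$. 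These structural points are all sound (modulo a sign in your codimension exponent for the HN strata, which should read $\sum_{i<j}\left(n_jd_i-n_id_j+n_in_j(g-1)\right)$ so as to be positive when the slopes decrease).

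The genuine gap is that your Step 3 is a statement of intent, not a proof. The entire mathematical content of the theorem is precisely the claim that, after clearing denominators and taking residues, every negative term cancels and the survivors are indexed by exactly the region $\{k_1+k_2<2g-2\}\cup\{k_1+k_2=2g-2,\ k_1<g-1\}$ together with the single central term at $(g-1,g-1)$. You assert that ``all negative contributions are shown to cancel in pairs'' without exhibiting the pairing or verifying the boundary of the summation region, and you yourself flag this as ``the genuine difficulty'' and ``the heart of \cite{GL}''; the paper concurs, calling the positivization of del Ba\~no's alternating expression ``a difficult combinatorial problem in general, which was solved by Gomez and Lee in \cite{GL} in rank 3''. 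As written, your argument therefore reduces the theorem to the very computation that constitutes it. To make it a proof you would have to carry out the rank-$3$ residue calculation and cancellation explicitly, or else do what the paper does and simply cite \cite{GL}.
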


\begin{rmk}[Independency on the degree $d$ in rank 3]
	\label{rmk:IndependentOfDegree}
	Note that the operation of taking dual bundles $E\mapsto E^\vee$ preserves stability, hence gives rise to an isomorphism of moduli spaces $\cN_{\cL}(n, d)\simeq \cN_{\cL^\vee}(n, -d)$ and $\cN(n, d)\simeq \cN(n, -d)$. Now specializing to rank $n=3$ and  assuming the existence of a degree 1 line bundle on $C$, we have $\cN(3, 1)\simeq\cN(3, -1)$
	and $\cN(3, d)\simeq \cN(3, d+3)$.
	 Hence when $\Pic^1(C)\neq \emptyset$,  the isomorphism class of $\cN(3, d)$ is independent of $d$, provided that $(3, d)=1$. 
\end{rmk}

We use a similar trick to \cite[Theorem 4.3]{FHPL-rank2} as described in $\S$\ref{sec lifting identities} above to upgrade Gomez--Lee's identity in $\widehat{K_0}(\CHM^{\eff}(\k,\QQ))$ to an isomorphism in the category $\CHM(\k,\QQ)$.

\begin{thm}
	\label{thm:Motive VB rank 3}
	Let $C$ be a smooth projective curve defined over $\k$ admitting a degree 1 line bundle. 
	For any $d\in \ZZ$ which is coprime to 3  and  for any $\cL\in \Pic^d(C)$, the rational Chow motive of $\cN_{C,\cL}(3,d)$ is
\[ \fh(\cN_{C,\cL}(3,d))  \simeq \fh(C^{(g-1)}\times C^{(g-1)})(3g-3) \oplus \bigoplus_{\substack{k_1 + k_2 < 2g-2 \text{ or }\\ k_1+k_2=2g-2 \text{ and } k_1<g-1}} \fh(C^{(k_1)} \times C^{(k_2)}) \otimes L_{k_1,k_2},\]
where $L_{k_1,k_2}$ are sums of Tate twists given by  $L_{k_1,k_2}= \QQ(k_1 + 2k_2) \oplus \QQ(8g-8 -2k_1 -3k_2)$.\\
 The rational Chow motive of $\cN=\cN_C(3, d)$ is 
 \[\fh(\cN(3, d))\simeq \fh(\cN_{\cL}(3, d))\otimes \fh(\Jac(C)).\]
\end{thm}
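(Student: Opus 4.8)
The plan is to combine the Gomez--Lee formula for $\chi(\cN_{\cL}(3,1))$ with Corollary \ref{cor lifting effective identities} to produce the fixed-determinant statement, and then derive the statement for $\cN$ via an \'etale Galois quotient argument. First I would observe that, by Remark \ref{rmk:IndependentOfDegree}, the isomorphism class of both $\cN_{\cL}(3,d)$ and $\cN(3,d)$ depends only on $d$ modulo $3$, and using the degree $1$ line bundle together with the isomorphism $\cN_{\cL}(3,d)\simeq\cN_{\cL^{\vee}}(3,-d)$ we reduce to the case $d=1$; here I would also need to note that $\fh(\cN_{\cL}(3,1))$ is independent of the choice of $\cL\in\Pic^1(C)$, which follows because any two degree $1$ line bundles differ by tensoring with a degree $0$ line bundle, inducing an isomorphism of the moduli spaces (tensoring a rank $3$ bundle by a line bundle $M$ with $M^{\otimes 3}\cong\cO_C$ identifies $\cN_{\cL}$ with $\cN_{\cL\otimes M^{\otimes 3}}$, but more simply $\Pic^1(C)$ is a torsor under $\Jac(C)=\Pic^0(C)$ and translation by $N\in\Pic^0(C)$ need not preserve the determinant; the cleanest argument is that $\cN_{\cL}(3,1)$ for varying $\cL$ form the fibers of a fiber bundle which is locally trivial, or one invokes \cite[Theorem 1.1]{FHPL-rank2} directly, which already gives $\fh(\cN(3,d))\simeq\fh(\cN_{\cL}(3,d))\otimes\fh(\Jac(C))$ with $\fh(\cN_{\cL})$ well-defined).

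Next I would apply Corollary \ref{cor lifting effective identities}: the Gomez--Lee theorem expresses $\chi(\cN_{\cL}(3,1))$ as a \emph{positive} $\ZZ_{\geq 0}[\LL]$-linear combination of motivic Poincar\'e polynomials of the smooth projective varieties $C^{(g-1)}\times C^{(g-1)}$ and $C^{(k_1)}\times C^{(k_2)}$. Since symmetric powers of a curve are abelian varieties' images (more precisely $C^{(k)}$ has abelian, hence Kimura finite dimensional, Chow motive --- it is dominated by $C^{k}$ and products and summands of finite dimensional motives are finite dimensional), each coefficient variety has Kimura finite dimensional motive; and $\fh(\cN_{\cL}(3,1))$ is abelian by \cite[Proposition 4.1]{FHPL-rank2}, hence Kimura finite dimensional. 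Therefore the hypotheses of Corollary \ref{cor lifting effective identities} are met, and the identity in $\widehat{K_0}(\CHM^{\eff}(\k,\QQ))$ lifts to the claimed isomorphism $\fh(\cN_{C,\cL}(3,d))\simeq\fh(C^{(g-1)}\times C^{(g-1)})(3g-3)\oplus\bigoplus\fh(C^{(k_1)}\times C^{(k_2)})\otimes L_{k_1,k_2}$ in $\CHM(\k,\QQ)$. One bookkeeping point: the theorem as stated takes coefficients $L_{k_1,k_2}=\QQ(k_1+2k_2)\oplus\QQ(8g-8-2k_1-3k_2)$, matching $\LL_{k_1,k_2}=\LL^{k_1+2k_2}+\LL^{8g-8-2k_1-3k_2}$, so one just needs to check both exponents are nonnegative over the index set, which is immediate since $k_1+2k_2\geq 0$ and, using $k_1+k_2\leq 2g-2$, $8g-8-2k_1-3k_2=8g-8-2(k_1+k_2)-k_2\geq 8g-8-2(2g-2)-k_2=4g-4-k_2\geq 0$.

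For the final displayed formula $\fh(\cN(3,d))\simeq\fh(\cN_{\cL}(3,d))\otimes\fh(\Jac(C))$, I would simply cite \cite[Theorem 1.1]{FHPL-rank2}, which establishes exactly this tensor decomposition for $\cN_C(n,d)$ for all $n$ coprime to $d$ under the standing hypothesis that $C$ admits a degree $1$ line bundle. The main obstacle --- the one genuinely doing work --- is the applicability of Corollary \ref{cor lifting effective identities}, which in turn rests on the fact that the Gomez--Lee formula has \emph{positive} coefficients; this positivity is precisely the nontrivial combinatorial content proved in \cite{GL}, reorganizing del Ba\~no's Harder--Narasimhan recursion (which a priori involves signs) into a manifestly effective sum. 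Everything else is formal: the reduction to $d=1$, the finite dimensionality of the motives involved, and the abelianness of $\fh(\cN_{\cL})$ are all either elementary or quoted from \cite{FHPL-rank2}.
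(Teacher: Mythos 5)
Your proposal is correct and follows essentially the same route as the paper: reduce to $d=1$ via Remark \ref{rmk:IndependentOfDegree}, apply Corollary \ref{cor lifting effective identities} using the positivity of the Gomez--Lee formula together with the abelianness (hence Kimura finite dimensionality) of $\fh(\cN_{\cL}(3,d))$ and of the symmetric-power motives, and cite \cite[Theorem 1.1]{FHPL-rank2} for the final tensor decomposition. The additional checks you include (nonnegativity of the Tate-twist exponents, independence of the choice of $\cL$) are harmless elaborations of the same argument.
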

\begin{proof}
In view of Remark \ref{rmk:IndependentOfDegree}, we can assume without loss of generality that $d=1$.

Since the formula of Gomez and Lee in $\widehat{K_0}(\CHM^{\eff}(\k,\QQ))$ is an equality between virtual motives with positive coefficients and the Chow motive $\fh(\cN_{\cL}(3,d))$ is abelian by \cite[Theorem 4.1]{FHPL-rank2} (and thus Kimura finite dimensional \cite[Th\'eor\`eme 2.8]{AndreBourbaki}), we can apply Corollary \ref{cor lifting effective identities} to deduce the claimed isomorphism in $\CHM(\k,\QQ)$. 

The final isomorphism follows from \cite[Theorem 1.1]{FHPL-rank2}.
\end{proof}

\section{Motives of moduli spaces of rank 2 pairs}

\subsection{Moduli spaces of rank two pairs}

Moduli of pairs $(V,\phi)$ consisting of a vector bundle $V$ on $C$ and a non-zero section $\phi \in H^0(C,V)$  have been studied by Bradlow \cite{Bradlow} and Thaddeus \cite{Thaddeus_pairs}, who gave a GIT construction of pair moduli spaces depending on a stability parameter $\sigma \in \QQ_{>0}$. In this section, we focus on moduli spaces of rank $2$ pairs, as in our later application to Higgs bundles, rank $2$ pairs are related to rank $3$ Higgs bundles via a Bia{\l}ynicki-Birula decomposition studied by Gothen \cite{Gothen} (see $\S$\ref{sec Type 12} and $\S$\ref{sec Type 21} below). 

\begin{defn}
A rank 2 pair $(V,\phi)$ is $\sigma$-\textit{semistable} if for all line subbundles $M \subset V$, we have 
\[ \mu(M) \leq \mu(V) - \epsilon(M,\phi) \sigma \quad \text{where } \quad \epsilon(M,\phi) := \left\{ \begin{array}{cl} 1 & \text{if } \phi \in H^0(C,M) \\ -1 & \mathrm{else.}
\end{array} \right. \] 
If this equality is strict for all $M$, we say that $(V,\phi)$ is $\sigma$-stable.
\end{defn}

Fix a stability parameter $\sigma$ and a degree $e>0$ (resp. a degree $e$ line bundle $\cL$ on $C$); then there is a projective moduli space $\cP^{\sigma-ss} = \cP^{\sigma-ss}_C(2,e)$ (resp. $\cP^{\sigma-ss}_{\cL}$) of $\sigma$-semistable pairs on $C$ of rank $2$ and degree $e$ (resp. with determinant isomorphic to $\cL$) constructed as a GIT quotient \cite{Thaddeus_pairs}.  For generic $\sigma$ (where semistability and stability coincide), the pair moduli spaces are smooth and have dimension
\[ \dim (\cP^{\sigma-ss}_C(2,e)) = e + 2g -2.\]

\subsection{Motives of moduli spaces of pairs via variation of stability and flips}
The motives of moduli spaces of pairs will naturally appear in $\S$\ref{sec Type 12} and $\S$\ref{sec Type 21} below.  To study their motives, we use Thaddeus' description \cite{Thaddeus_pairs} of the birational transformations between these moduli spaces as the stability parameter $\sigma$ varies, as explicit standard flips. This immediately gives rise to formulas for the motivic Poincar\'{e} polynomials of moduli spaces of pairs in the Grothendieck ring of Chow motives and then we apply Proposition \ref{prop:UpgradeK0toCHM} by finding a positive expression for the motivic Poincar\'{e} polynomials of moduli spaces of pairs.

Let us first recall the notion of standard flips/flops.

\begin{defn}[Standard flip]
	\label{def flip}
	Let $S$ be a smooth projective variety and $\phi \colon X \dashrightarrow X'$ be a birational transformation between smooth projective varieties. We say that $\phi$ is a \textit{standard flip of type $(m,l)$ with centre $S$} if there are closed smooth subvarieties $Z \hookrightarrow X$ and $Z' \hookrightarrow X'$ which are projective bundles over $S$ of relative dimensions $m$ and $l$ respectively such that the blow-up $\tilde{X}$ of  $X$ along $Z$ coincides with the blow-up of $X'$ along $Z'$ with common exceptional divisor $E$. This is summarised by the following diagram
	\[ \xymatrixcolsep{1cm} \xymatrixrowsep{0.5cm} \xymatrix{
		& & E  \ar[lldd] \ar@{^{(}->}[d] \ar[rrdd] & & \\
		& & \tilde{X} \ar[ld] \ar[rd] & & \\
		Z \: \ar[rrdd]_{\PP^m} \ar@{^{(}->}[r] & X \ar@{-->}[rr]^{\text{type } (m,l)}_{\text{centre } S} & & X' & \: Z' \ar@{_{(}->}[l] \ar[lldd]^{\PP^l} \\ & & & & \\
		& & S & &
	}\]  
	where the top two squares are blow-up squares and the outer square is cartesian. When $m = l$, we call this a standard flop.
\end{defn}

Thaddeus studied variation of stability for rank $2$ degree $e$ pairs $(V,\phi)$ in \cite{Thaddeus_pairs}. The space of stability parameters $\QQ_{>0}$ admits a wall and chamber decomposition given by considering how the notion of (semi)stability changes as $\sigma$ varies. The moduli space $\cP^\sigma(2, e)$ is non-empty if and only if $\sigma \leq e/2$ by \cite[(1.3)]{Thaddeus_pairs}, and so we can restrict our attention to the interval $(0,e/2]$. There is a finite set $W \subset (0,e/2]$ of walls; that is, critical values of $\sigma$ for which semistability and stability do not coincide (i.e. there is a line subbundle of degree $e'$ with $e' = e/2  \pm \sigma$). The walls are given by $\sigma_0> \dots >\sigma_{m}$ where $m = \lfloor (e-1)/2 \rfloor$ and $\sigma_i :=e/2 - i$ by purely numerical considerations. The connected components of $(0,e/2] \setminus W$ are called chambers which we also label from right to left as $C_i = (\sigma_{i+1},\sigma_i)$, where for notational convenience, we set $\sigma_{m+1}:=0$ (which is not a stability parameter). In each chamber $C_i$, semistability and stability coincide and the corresponding moduli space of pairs is smooth and only depends on the chamber; thus we write  $\cP^i_e:=\cP^{\sigma-ss}(2,e)$ (resp. $\cP^i_{\cL} :=\cP_{\cL}^{\sigma-ss}(2,e)$) for the moduli space of pairs (resp. with determinant $\cL$) which are stable with respect to $\sigma \in C_i$. 

\begin{thm}[Thaddeus \cite{Thaddeus_pairs}]\label{thm pairs VGIT}
	For degree $e \geq 3$, let  $m = \lfloor (e-1)/2 \rfloor$ and let  $\cP^0_{\cL}, \dots , \cP^m_{\cL}$ be the moduli spaces of stable pairs of rank $2$ vector bundles with determinant a degree $e$ line bundle $\cL$ and for stability parameters appearing in each of the $C_0,\dots,C_m$ chambers introduced above.
	\begin{enumerate}[label=\emph{(\roman*)}]
		\item \label{pairs large stab} The extremal moduli space $\cP^0_{\cL}$ is the moduli space of non-split extensions of $\cL$ by $\cO_C$  and consequently we have $\cP^0_{\cL}\cong \PP(H^1(C,\cL^{-1})) \cong \PP^{e+g-2}$.
		\item \label{pairs small stab} The opposite extremal moduli space $\cP^m_{\cL}$ admits a natural map $\pi : \cP^m_{\cL} \ra \cN_{\cL}=\cN_{\cL}(2,e)$ given by forgetting the section. If $e > 2g-2$, then $\pi$ is surjective with fibre $\PP(H^0(C,E))$ over $E \in \cN_{\cL}$. If $e > 4g-4$, then $\pi$ is a $\PP^{e-2g+1}$-bundle.
		\item \label{pairs flips} There is a standard flip $\cP^{i-1}_{\cL} \dashrightarrow \cP^i_{\cL}$ of type $(i-1,e+g-2(i+1))$ with centre $C^{(i)}$.
	\end{enumerate}
\end{thm}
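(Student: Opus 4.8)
The plan is to follow Thaddeus's variation-of-GIT analysis of rank $2$ pairs from \cite{Thaddeus_pairs}, carried out directly for the fixed-determinant moduli spaces. I take for granted the wall-and-chamber description recorded above: the walls in $(0,e/2]$ are $\sigma_0>\dots>\sigma_m$ with $\sigma_i=e/2-i$, $m=\lfloor(e-1)/2\rfloor$, and on each chamber $C_i$ the space $\cP^i_\cL$ is smooth projective of dimension $e+g-2$.

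For \emph{(i)} I would analyse the chamber $C_0=(e/2-1,e/2)$, where $\sigma$ is close to $e/2$. There $\sigma$-stability forces the section $\cO_C\xrightarrow{\phi}V$ to be a subbundle: its saturation $M$ contains $\phi$, so stability requires $\deg M<e/2-\sigma<1$, hence $\deg M=0$ and $M=\cO_C$. Thus $V$ is an extension $0\to\cO_C\to V\to\cL\to 0$, which must be non-split, since for the split bundle the line subbundle $\cL$ (with $\phi\notin H^0(\cL)$ and $\deg\cL=e>e/2+\sigma$) destabilises it; conversely a short check on line subbundles shows that every non-split such extension is $\sigma$-stable for $\sigma\in C_0$. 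Hence $\cP^0_\cL\cong\PP\Ext^1(\cL,\cO_C)=\PP H^1(C,\cL^{-1})$, and since $\deg\cL^{-1}<0$ forces $h^0(\cL^{-1})=0$, Riemann--Roch gives $h^1(C,\cL^{-1})=e+g-1$, so $\cP^0_\cL\cong\PP^{e+g-2}$. For \emph{(ii)} I would analyse $C_m$, where $\sigma$ is arbitrarily small, so $\sigma$-stability approximates semistability of the underlying bundle and $(V,\phi)\mapsto V$ defines the forgetful morphism $\pi\colon\cP^m_\cL\to\cN_\cL(2,e)$, whose fibre over a stable $E$ is the linear system $\PP H^0(C,E)$. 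When $e>2g-2$ one has $\chi(E)=e-2g+2>0$, so $h^0(E)>0$ and $\pi$ is surjective; when $e>4g-4$, for any stable $E$ the bundle $K_C\otimes E^\vee$ is stable of negative slope $(2g-2)-e/2$, hence has no global sections, so $h^1(E)=0$ and $h^0(E)=e-2g+2$ is constant. By cohomology and base change the pushforward of a suitably twisted universal bundle is then locally free of rank $e-2g+2$, exhibiting $\pi$ as a $\PP^{e-2g+1}$-bundle.

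The heart of the argument is \emph{(iii)}. Crossing the wall $\sigma_i$ from $C_{i-1}$ down to $C_i$, the only pairs whose (semi)stability changes are those admitting a sub-line-bundle of the critical degrees $i$ or $e-i$. On the $C_i$-side the newly stable pairs are the non-split extensions $0\to M\to V\to N\to 0$ with $\phi\in H^0(M)$ and $\deg M=i$; writing $D=\operatorname{div}(\phi)\in C^{(i)}$ we get $M=\cO_C(D)$, $N=\cL(-D)$, and the extension class lies in $\PP\Ext^1(N,M)=\PP H^1(C,\cL^{-1}(2D))$, which has dimension $e+g-2(i+1)$ by Riemann--Roch since $\deg\cL^{-1}(2D)=2i-e<0$; so this locus $Z'\subset\cP^i_\cL$ is a $\PP^{e+g-2(i+1)}$-bundle over $C^{(i)}$. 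Dually, on the $C_{i-1}$-side the pairs that are lost are the non-split extensions \emph{of pairs} $0\to(M',0)\to(V,\phi)\to(N',\bar\phi)\to 0$ with $\deg N'=i$, $\bar\phi\neq 0$; setting $D'=\operatorname{div}(\bar\phi)\in C^{(i)}$ and comparing $H^\bullet(M'N'^{-1})$ with $H^\bullet(M')$ via the divisor sequence for $D'$ one finds $\dim\Ext^1_{\mathrm{pairs}}((N',\bar\phi),(M',0))=i$ independently of the choices, so the locus $Z\subset\cP^{i-1}_\cL$ is a $\PP^{i-1}$-bundle over $C^{(i)}$. It then remains to prove that $\cP^{i-1}_\cL\dashrightarrow\cP^i_\cL$ is exactly the standard flip of Definition \ref{def flip} with centre $C^{(i)}$ and these bundle data. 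For this I would use Thaddeus's master space: the GIT quotient of the pair parameter scheme linearised at the critical value $\sigma_i$ dominates both $\cP^{i-1}_\cL$ and $\cP^i_\cL$, and a local study at the strictly $\sigma_i$-semistable locus — whose image is precisely $Z$ and $Z'$ — identifies these two contractions as the blow-ups of $\cP^{i-1}_\cL$ along $Z$ and of $\cP^i_\cL$ along $Z'$, with common exceptional divisor the $(\PP^{i-1}\times\PP^{e+g-2(i+1)})$-bundle over $C^{(i)}$.

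\textbf{The main obstacle} is this last step: one must compute the normal bundles of $Z$ and $Z'$ from the deformation theory of pairs (the Kodaira--Spencer maps at the wall) and check that they have the $\cO(-1)$-twisted-pullback shape required in Definition \ref{def flip}, so that the two blow-ups literally coincide. The dimension bookkeeping above is what forces the numerology $(i-1,\,e+g-2(i+1))$ with centre $C^{(i)}$, but the scheme-theoretic identification of the common resolution is the technical core of \cite[\S 3]{Thaddeus_pairs}.
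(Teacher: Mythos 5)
The paper offers no proof of this statement: it is quoted verbatim from Thaddeus \cite{Thaddeus_pairs}, so the only ``paper's proof'' is the citation. Your sketch is a faithful reconstruction of Thaddeus's actual argument in \cite[\S 3]{Thaddeus_pairs} -- the identification of $\cP^0_{\cL}$ with non-split extensions, the forgetful map for small $\sigma$, and the two wall-crossing loci with the correct Riemann--Roch/pair-$\Ext$ dimension counts ($\PP^{e+g-2(i+1)}$- and $\PP^{i-1}$-bundles over $C^{(i)}$, consistent with $\dim Z+\dim Z'=\dim\cP+\dim C^{(i)}-1$) -- and the one step you leave unproved, the scheme-theoretic identification of the common blow-up via the master space, is exactly the technical core that the paper itself delegates entirely to Thaddeus.
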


If one does not fix the determinant $\cL$, then there is a similar picture where (i) $\cP^0_e \ra \Pic^e(C)$ is a $\PP^{e+g-2}$-fibration, (ii) there is a forgetful map $\pi: \cP^m_e \ra \cN$ which is a $\PP^{e-2g+1}$-fibration for $e > 4g-4$ and (iii) one replaces the centres of the flips $C^{(i)}$ with $\Pic^{e-i}(C) \times C^{(i)}$ (see \cite[$\S$8]{Thaddeus_VGIT}). More precisely, the relationship between these moduli spaces of stable pairs is illustrated by the following diagram
\[   \xymatrix{ & \tilde{\cP^1} \ar[dl] \ar[dr] & & \tilde{\cP^2} \ar[dl] \ar[dr] & & \cdots \ar[dl] \ar[dr] & & \tilde{\cP^m} \ar[dl] \ar[dr] \\
	\cP^0_{e} \ar[d]_{\begin{smallmatrix} \PP^{e+g-2} \\ \text{bundle} \end{smallmatrix}} \ar@{-->}[rr]^{\begin{smallmatrix} \text{type} \\ (0,e+g -4) \end{smallmatrix}}_{\begin{smallmatrix} \text{centre }  \\ \Pic^{e-1}(C) \times C  \end{smallmatrix}} &  & \cP^1_{e}  \ar@{-->}[rr]^{\begin{smallmatrix} \text{type} \\ (1,e+g -6) \end{smallmatrix}}_{\begin{smallmatrix} \text{centre }  \\ \Pic^{e-2}(C) \times C^{(2)}  \end{smallmatrix}} & & \cP^2_{e} & \cdots & \cP^{m-1}_{e}\ar@{-->}[rr]^{\begin{smallmatrix} \text{type} \\(m-1,e+g-2(m+1)) \end{smallmatrix}}_{\begin{smallmatrix} \text{centre }   \\ \Pic^{e-m}(C) \times C^{(m)}  \end{smallmatrix}} & & \cP^m_{e} \ar[d]^{\pi} \\ \Pic^e(C) & & &&&& & & \cN. }\]

We can easily calculate the motivic Poincar\'{e} polynomial of $\cP^i_{e}$.

\begin{lemma}
	\label{lemma:MotivePairK0}
	We have
	\[ \chi(\cP^i_{e}) = \sum_{j=0}^i \chi(\Pic^{e-j}(C) \times C^{(j)}) \frac{\LL^{e+g - 2j -1} - \LL^j}{\LL - 1}. \]
\end{lemma}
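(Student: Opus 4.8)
The plan is to compute $\chi(\cP^i_e)$ by induction on the chamber index $i$, using the wall-crossing diagram that precedes the statement. The base case $i=0$ is immediate from Theorem \ref{thm pairs VGIT}\ref{pairs large stab} (in the non-fixed-determinant version described just afterwards): since $\cP^0_e \to \Pic^e(C)$ is a Zariski-locally-trivial $\PP^{e+g-2}$-fibration, one has $\chi(\cP^0_e) = \chi(\Pic^e(C)) \cdot (1 + \LL + \dots + \LL^{e+g-2}) = \chi(\Pic^e(C)) \cdot \frac{\LL^{e+g-1}-1}{\LL-1}$, which is exactly the $j=0$ term of the claimed formula and the only term present when $i=0$.

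For the inductive step, I would use the standard flip $\cP^{i-1}_e \dashrightarrow \cP^i_e$ of type $(i-1, e+g-2(i+1))$ with centre $S = \Pic^{e-i}(C)\times C^{(i)}$. The key input is the effect of a standard flip of type $(m,l)$ with centre $S$ on classes in $K_0$ of Chow motives: blowing up a smooth subvariety $Z \subset X$ which is a $\PP^m$-bundle over $S$ contributes $\chi(S)(\LL + \dots + \LL^m) = \chi(S)\frac{\LL^{m+1}-\LL}{\LL-1}$ to the motive of the blow-up $\tilde X$ beyond $\chi(X)$ (by the blow-up formula for motives), and similarly $\chi(\tilde X) = \chi(X') + \chi(S)\frac{\LL^{l+1}-\LL}{\LL-1}$. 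Equating gives
\[ \chi(\cP^i_e) = \chi(\cP^{i-1}_e) + \chi(S)\,\frac{(\LL^{l+1}-\LL) - (\LL^{m+1}-\LL)}{\LL-1} = \chi(\cP^{i-1}_e) + \chi\!\left(\Pic^{e-i}(C)\times C^{(i)}\right)\frac{\LL^{e+g-2i-1}-\LL^i}{\LL-1}, \]
using $m = i-1$ so $\LL^{m+1} = \LL^i$ and $l = e+g-2(i+1)$ so $\LL^{l+1} = \LL^{e+g-2i-1}$. Adding this new term to the inductive hypothesis for $\chi(\cP^{i-1}_e)$ yields precisely $\sum_{j=0}^i \chi(\Pic^{e-j}(C)\times C^{(j)})\frac{\LL^{e+g-2j-1}-\LL^j}{\LL-1}$, completing the induction.

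The main point requiring care — though it is standard — is justifying the passage from the birational transformation to the identity in $K_0(\CHM^{\eff}(\k,\QQ))$ (or its completion): one needs that $\cP^{i-1}_e$ and $\cP^i_e$ are smooth projective, that the centres and the loci $Z, Z'$ are smooth with the stated projective bundle structure over $S = \Pic^{e-i}(C)\times C^{(i)}$, and that the two blow-ups agree — all of which is exactly the content of Thaddeus' Theorem \ref{thm pairs VGIT} together with its non-fixed-determinant variant recalled in the text. Given this, the motivic blow-up formula (additivity of the blow-up of a smooth centre, in the form $\fh(\tilde X) \simeq \fh(X)\oplus\bigoplus_{j=1}^{c-1}\fh(Z)(j)$ for a blow-up of codimension $c$) applied to both towers of the diagram gives the displayed relation at the level of Chow motives, hence a fortiori in $\widehat{K_0}(\CHM^{\eff}(\k,\QQ))$. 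I do not expect any real obstacle here; the only mild subtlety is bookkeeping of the exponents, ensuring that the type $(m,l) = (i-1, e+g-2(i+1))$ and the chamber convention line up so that the telescoping sum produces the stated closed form.
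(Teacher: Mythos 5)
Your overall strategy is exactly the paper's: the proof there is a one-line appeal to Theorem \ref{thm pairs VGIT} and the wall-crossing diagram, and you have simply spelled out the telescoping induction. The base case and the final closed form are correct. However, the intermediate step where you invoke the blow-up formula is misstated. For the blow-up $\tilde{X}\to X$ along $Z$, where $Z$ is a $\PP^m$-bundle over $S$ of codimension $l+1$ in $X$ (not $m+1$ --- the exceptional divisor is a $\PP^l$-bundle over $Z$), the correct contribution is
\[ \chi(\tilde{X})-\chi(X)=\chi(Z)\,(\LL+\dots+\LL^{l})=\chi(S)\,\frac{(\LL^{m+1}-1)(\LL^{l+1}-\LL)}{(\LL-1)^2},\qquad \chi(\tilde{X})-\chi(X')=\chi(S)\,\frac{(\LL^{l+1}-1)(\LL^{m+1}-\LL)}{(\LL-1)^2}, \]
neither of which equals the $\chi(S)\frac{\LL^{m+1}-\LL}{\LL-1}$ and $\chi(S)\frac{\LL^{l+1}-\LL}{\LL-1}$ that you wrote (you dropped the $\chi(\PP^m)$, resp.\ $\chi(\PP^l)$, factor coming from $\chi(Z)$, resp.\ $\chi(Z')$); moreover, taking your two identities at face value would give the \emph{negative} of the difference you then display. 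The error cancels: subtracting the two correct expressions yields $\chi(X')-\chi(X)=\chi(S)\,\frac{\LL^{l+1}-\LL^{m+1}}{\LL-1}$, which is exactly your displayed increment $\chi(S)\frac{\LL^{e+g-2i-1}-\LL^{i}}{\LL-1}$, so the induction goes through once this bookkeeping is repaired. (Equivalently, you could quote directly that a standard flip of type $(m,l)$ changes the class by $\chi(S)(\LL^{m+1}+\dots+\LL^{l})$, which is the $K_0$-shadow of Jiang's theorem cited later in the paper.)
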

\begin{proof}
	This follows directly from Theorem \ref{thm pairs VGIT} (see the above diagram).
\end{proof}

\begin{cor}
	\label{cor:MotivePairSmalli}
	When $3i\leq e+g-1$, we have an isomorphism in $\CHM^{\eff}(\k, \QQ)$:
	\[ \fh(\cP^i_{e}) \simeq \fh(\Jac(C))\otimes \left(\bigoplus_{j=0}^i \bigoplus_{k=j}^{e+g-2j-2}\fh( C^{(j)})(k) \right).\]
\end{cor}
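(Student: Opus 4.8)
The plan is to lift the identity of Lemma \ref{lemma:MotivePairK0} from $\widehat{K_0}(\CHM^{\eff}(\k,\QQ))$ to an isomorphism in $\CHM^{\eff}(\k,\QQ)$ by applying Corollary \ref{cor lifting effective identities}, so the real work is purely formal manipulation to present $\chi(\cP^i_e)$ as an \emph{effective} (positive) polynomial in $\LL$ with coefficients that are motivic Poincar\'e polynomials of smooth projective varieties with abelian (hence Kimura finite dimensional) motives. First I would rewrite the factor $\frac{\LL^{e+g-2j-1}-\LL^j}{\LL-1}$: since we are in the regime $3i\leq e+g-1$, for each $j$ in the range $0\leq j\leq i$ we have $j\leq e+g-2j-2$ (this is exactly the inequality $3j\leq e+g-2$, which follows from $3i\leq e+g-1$ and $j\le i$, with the half-integer parity handled by noting $e+g-1-3j\ge 0$ forces $e+g-2j-2\ge j-1$, and one checks the endpoint separately), so the geometric-series quotient expands as the genuinely \emph{positive} sum $\sum_{k=j}^{e+g-2j-2}\LL^k$ with no cancellation. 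This is the one place where the hypothesis $3i\le e+g-1$ is used, and it is the only subtlety; everything else is bookkeeping.

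Next I would use $\Pic^{e-j}(C)\cong \Jac(C)$ (which holds since $C$ admits a degree $1$ line bundle, so translation gives an isomorphism of varieties) together with multiplicativity of $\chi$ on products to write $\chi(\Pic^{e-j}(C)\times C^{(j)}) = \chi(\Jac(C))\cdot\chi(C^{(j)})$. Substituting the expansion above into Lemma \ref{lemma:MotivePairK0} yields
\[ \chi(\cP^i_e) = \chi(\Jac(C))\cdot\sum_{j=0}^i\sum_{k=j}^{e+g-2j-2}\chi(C^{(j)})\,\LL^k = \sum_{j=0}^i\sum_{k=j}^{e+g-2j-2}\chi\!\left(\Jac(C)\times C^{(j)}\right)\LL^k, \]
which is manifestly an effective identity: each coefficient is $\chi$ of the smooth projective variety $\Jac(C)\times C^{(j)}$, and the motive $\fh(\Jac(C)\times C^{(j)})$ is abelian (a product of an abelian variety and a symmetric power of a curve), hence Kimura finite dimensional by \cite[Th\'eor\`eme 2.8]{AndreBourbaki}. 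The motive $\fh(\cP^i_e)$ is itself abelian: each $\cP^i_e$ is obtained from $\Pic^e(C)$ by a sequence of projective bundle constructions and standard flips with centres of the form $\Pic^{e-j}(C)\times C^{(j)}$ along smooth projective varieties with abelian motives, and abelian motives are stable under the blow-up and projective bundle formulas; therefore Corollary \ref{cor lifting effective identities} applies directly.

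Finally I would invoke Corollary \ref{cor lifting effective identities} with $X = \cP^i_e$ and the $X_i$ being the varieties $\Jac(C)\times C^{(j)}$ appearing above (with the appropriate multiplicities indexed by $(j,k)$), concluding
\[ \fh(\cP^i_e)\simeq \bigoplus_{j=0}^i\bigoplus_{k=j}^{e+g-2j-2}\fh\!\left(\Jac(C)\times C^{(j)}\right)(k) \simeq \fh(\Jac(C))\otimes\left(\bigoplus_{j=0}^i\bigoplus_{k=j}^{e+g-2j-2}\fh(C^{(j)})(k)\right), \]
where the last step just pulls the constant factor $\fh(\Jac(C))$ out of the direct sum using the K\"unneth isomorphism $\fh(\Jac(C)\times C^{(j)})\simeq \fh(\Jac(C))\otimes\fh(C^{(j)})$. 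The only genuine obstacle is the positivity check in the first step — verifying that the range $3i\leq e+g-1$ guarantees $j\leq e+g-2j-2$ for all $0\le j\le i$ so that the quotient $(\LL^{e+g-2j-1}-\LL^j)/(\LL-1)$ has no negative contributions — and this is an elementary inequality; the rest of the argument is an immediate application of the lifting machinery of \S\ref{sec lifting identities}.
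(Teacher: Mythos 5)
Your proposal is correct and follows essentially the same route as the paper's own (second, self-contained) proof: expand the geometric-series quotient in Lemma \ref{lemma:MotivePairK0} as a genuinely positive sum under the hypothesis $3i\leq e+g-1$, check Kimura finite dimensionality of both sides, apply Corollary \ref{cor lifting effective identities}, and identify $\Pic^{e-j}(C)$ with $\Jac(C)$. The paper additionally sketches an alternative derivation via Jiang's result on Chow motives under standard flips (which even gives the integral statement), but your argument matches the proof the authors actually spell out.
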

\begin{proof}
The change in integral Chow motives of smooth projective varieties under a standard flip is described by a recent result of Jiang \cite{Jiang19}.	Combined with Theorem \ref{thm pairs VGIT} (note that the canonical class is always increasing along the flips, under the numerical hypothesis), we get the formula in the statement, even in $\CHM(\k, \ZZ)$.

We give here a quick proof without using the somewhat difficult result of Jiang. Note that the motives in the formula in Lemma \ref{lemma:MotivePairK0} are all Kimura finite dimensional and all coefficients are positive, when $3i\leq e+g-1$. Hence we can apply Corollary \ref{cor lifting effective identities} to deduce that 
	\[\fh(\cP^i_{e}) \simeq \bigoplus_{j=0}^i \fh\left(\Pic^{e-j}(C) \times C^{(j)}\right)\otimes \left(\bigoplus_{k=j}^{e+g-2j-2}\QQ(k)\right).\]
In order to get the desired form, it suffices to identify $\Pic^{e-j}(C)$ with $\Jac(C)$.
\end{proof}

Observe that the formula in Lemma \ref{lemma:MotivePairK0} contains terms with negative coefficients, when $3i>e+g-1$. 
The main goal of the next two sections is to deal with this case. In fact, we will give two formulas for the rational Chow motive of $\cP_e^i$. Just as in the second proof of Corollary \ref{cor:MotivePairSmalli}, our strategy is to work in the Grothendieck ring of Chow motives and establish an expression of the motivic Poincar\'{e} polynomial of these pairs moduli spaces in terms of Kimura finite dimensional motives with \textit{positive} coefficients in order to apply Proposition \ref{prop:UpgradeK0toCHM} or Corollary \ref{cor lifting effective identities}.

\subsection{A formula in terms of the Jacobian motive of the curve}
Choosing a zero-cycle $z$  of degree 1 on $C$ (its existence is guaranteed by the hypothesis that $\Pic^1(C)\neq \emptyset$), we have \[\fh(C)\simeq \QQ\oplus \fh^1(C)\oplus \QQ(1),\] in the category $\CHM^{\eff}(\k, \QQ)$,
where $\fh^1(C):=(C, \Delta_C-z\times C-C\times z)$, which can be appropriately called the \textit{Jacobian motive} of $C$. In some sense, $\fh^1(C)$ is the most fundamental  indecomposable building block for objects in the tensor subcategory of $\CHM(\k, \QQ)$ generated by $\fh(C)$. The purpose of this section is to give a formula of the rational Chow motive of the pair moduli space $\cP_e^i$ in terms of $\fh^1(C)$.

The following facts will be the main ingredient:
\begin{lemma}[K\"unnemann \cite{Kunnemann}]
	\label{lemma:Kunnemann}
Let $g$ be the genus of $C$ and $b$ a positive integer. 
 \begin{enumerate}[label=\emph{(\roman*)}, leftmargin=0.7cm]
 	\item $\Sym^b(\fh^1(C))=0$ if $b>2g$.
 	\item For any $g \leq b\leq 2g$, we have $\Sym^b(\fh^1(C))\simeq \Sym^{2g-b}\fh^1(C)\otimes \QQ(b-g)$ in $\CHM^{\eff}(\k, \QQ)$. Hence 
 	$[\Sym^b\fh^1(C)]=[\Sym^{2g-b}\fh^1(C)]\cdot\LL^{b-g}$ in $K_0(\CHM^{\eff}(\k, \QQ))$.
 \end{enumerate}
\end{lemma}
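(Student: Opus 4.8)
The plan is to deduce both parts from the structure of the Chow motive of the Jacobian, which is exactly what K\"unnemann's paper provides. First I would recall the classical (canonical, base-point independent) isomorphism $\fh^1(C)\simeq \fh^1(\Jac(C))$ coming from the Abel--Jacobi map, together with K\"unnemann's multiplicative Chow--K\"unneth decomposition
\[ \fh(\Jac(C))\simeq \bigoplus_{b=0}^{2g}\Sym^b\fh^1(C), \qquad \fh^b(\Jac(C))\simeq \Sym^b\fh^1(C), \]
where the symmetric powers are taken in the graded-commutative tensor category $\CHM(\k,\QQ)$, so that, $\fh^1(C)$ being an odd (oddly finite dimensional) object of dimension $2g$, $\Sym^b$ really plays the role of an exterior power and the sum above is finite. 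Part (i) is then immediate: K\"unnemann shows directly that $\Sym^b\fh^1(C)=\fh^b(\Jac(C))=0$ for $b>2g$. (It is worth stressing that this genuinely uses his construction: conservativity of realizations is not available for Chow motives, so it does not formally follow from $\wedge^{>2g}H^1(C)=0$.)

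For part (ii) I would combine two instances of Poincar\'e duality. On the curve, $\fh(C)^\vee\simeq \fh(C)(-1)$ restricts on the middle Chow--K\"unneth piece to $\fh^1(C)^\vee\simeq \fh^1(C)(-1)$. Since in a $\QQ$-linear rigid symmetric monoidal category $\Sym^b$ commutes with duality and with twisting by an invertible (even) object, this gives
\[ (\Sym^b\fh^1(C))^\vee\simeq \Sym^b(\fh^1(C)(-1))\simeq (\Sym^b\fh^1(C))(-b). \]
On the Jacobian, the exterior-algebra multiplication $\Sym^b\fh^1(C)\otimes \Sym^{2g-b}\fh^1(C)\to \Sym^{2g}\fh^1(C)$ into the invertible object $\Sym^{2g}\fh^1(C)\simeq \QQ(g)$ is a perfect pairing (this is K\"unnemann's Poincar\'e duality for $\fh(\Jac(C))$, or, formally, the determinant pairing of an oddly finite dimensional object of dimension $2g$), whence
\[ (\Sym^b\fh^1(C))^\vee\simeq (\Sym^{2g-b}\fh^1(C))(-g). \]
Comparing the two formulas for $(\Sym^b\fh^1(C))^\vee$ and twisting by $\QQ(b)$ yields $\Sym^b\fh^1(C)\simeq (\Sym^{2g-b}\fh^1(C))(b-g)$. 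For $g\leq b\leq 2g$ both $\Sym^{2g-b}\fh^1(C)$ (a direct summand of $\fh(\Jac(C))$) and $\QQ(b-g)$ are effective, so this isomorphism lives in $\CHM^{\eff}(\k,\QQ)$; passing to classes and using $[\QQ(1)]=\LL$ gives $[\Sym^b\fh^1(C)]=[\Sym^{2g-b}\fh^1(C)]\cdot\LL^{b-g}$ in $K_0(\CHM^{\eff}(\k,\QQ))$.

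The hard part is not conceptual but bookkeeping: one has to pin down the Tate twists in $\fh^{2g}(\Jac(C))\simeq\QQ(g)$ and in $\fh^1(C)^\vee\simeq\fh^1(C)(-1)$ consistently with the covariant/homological convention of the paper, since an error of sign in the exponent would destroy the effectivity of the final formula. The other point requiring care is that the two genuinely non-elementary inputs — vanishing of $\Sym^{>2g}\fh^1(C)$ and perfectness of the motivic cup-product pairing on $\fh(\Jac(C))$ — must be taken from K\"unnemann, as neither is formal from cohomology. Alternatively, one may replace the perfect-pairing input by K\"unnemann's hard Lefschetz isomorphism for $\fh(\Jac(C))$, which identifies $\Sym^b\fh^1(C)$ with a Tate twist of $\Sym^{2g-b}\fh^1(C)$ directly; the twist computation is then the same.
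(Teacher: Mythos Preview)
The paper does not give its own proof of this lemma: it is stated with attribution to K\"unnemann and used as a black box, with no argument supplied. Your proposal is therefore not so much a different route as the route --- you are unpacking what the citation means. The sketch you give is correct: the vanishing in (i) is K\"unnemann's identification $\fh^b(\Jac(C))\simeq \Sym^b\fh^1(C)$ together with the fact that $\fh^b$ vanishes above $2g$, and your derivation of (ii) from the two Poincar\'e dualities (on $C$ and on $\Jac(C)$) is sound, including the twist bookkeeping in the paper's covariant convention. Your remark that one could alternatively invoke K\"unnemann's motivic hard Lefschetz for abelian varieties is also correct and arguably closer to how one would cite the result directly.
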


\begin{prop}
	\label{prop:MotivePairK0bis}
	The following identity holds in $\widehat{K_0}(\CHM(\k,\QQ))$.
	\[\chi(\cP_e^i)=\chi(\Jac(C))\sum_{b=0}^i[\Sym^b(\fh^1(C))]\cdot \frac{(\LL^b-\LL^{e+g-1-2i})(1-\LL^{i-b+1})(1-\LL^{i-b+2})}{(1-\LL)^2(1-\LL^2)}\]
\end{prop}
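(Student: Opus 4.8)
The starting point is the formula of Lemma \ref{lemma:MotivePairK0}, which already expresses $\chi(\cP^i_e)$ in $\widehat{K_0}(\CHM(\k,\QQ))$ as
\[
\chi(\cP^i_e) = \chi(\Jac(C)) \sum_{j=0}^i \chi(C^{(j)})\,\frac{\LL^{e+g-2j-1}-\LL^{j}}{\LL-1},
\]
using $\Pic^{e-j}(C)\simeq\Jac(C)$. The plan is therefore purely a manipulation of symmetric-power classes in $K_0(\CHM^{\eff}(\k,\QQ))$, combined with the relations of Lemma \ref{lemma:Kunnemann}. First I would recall the standard identity for the class of a symmetric power of a curve: from $\fh(C)\simeq\QQ\oplus\fh^1(C)\oplus\QQ(1)$ and the formula $\chi(C^{(j)})=\sum_{a+b+c=j}\LL^{a}[\Sym^b\fh^1(C)]\LL^{c}$ (i.e. $[\Sym^j\fh(C)]$), one gets
\[
\chi(C^{(j)}) = \sum_{b=0}^{j}[\Sym^b\fh^1(C)]\,\frac{\LL^{j-b+1}-\LL^{?}}{\LL-1}
\]
— more precisely $\chi(C^{(j)})=\sum_{b=0}^j [\Sym^b\fh^1(C)]\sum_{a=0}^{j-b}\LL^a$. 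Substituting this into the displayed formula for $\chi(\cP^i_e)$ and switching the order of summation over $b$ and $j$ gives an expression of the form $\chi(\Jac(C))\sum_{b=0}^i[\Sym^b\fh^1(C)]\cdot P_{b}(\LL)$ where $P_b(\LL)$ is an explicit rational function in $\LL$ obtained by summing a geometric-type series in $j$ from $b$ to $i$.

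The next step is to evaluate $P_b(\LL)$ in closed form. The inner double sum $\sum_{j=b}^i \bigl(\sum_{a=0}^{j-b}\LL^a\bigr)\cdot\frac{\LL^{e+g-2j-1}-\LL^j}{\LL-1}$ is, after reindexing $j=b+t$, a finite sum of geometric series in $t$, and collecting terms should produce exactly
\[
P_b(\LL)=\frac{(\LL^b-\LL^{e+g-1-2i})(1-\LL^{i-b+1})(1-\LL^{i-b+2})}{(1-\LL)^2(1-\LL^2)}.
\]
I expect this to be the computational heart of the argument; it is elementary but one must be careful with the two pieces $\LL^{e+g-2j-1}$ and $\LL^j$ separately, each contributing a product of two geometric sums, and then recognize that the difference telescopes into the stated factored form. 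A sanity check at $i=b$ (where $P_b=\LL^b-\LL^{e+g-1-2b}$ up to the obvious simplification, matching the extremal flip) and comparison with the $3i\le e+g-1$ case of Corollary \ref{cor:MotivePairSmalli} is worth recording.

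Finally, Lemma \ref{lemma:Kunnemann}(i) is needed to truncate the sum: a priori $b$ could run beyond $2g$, but $\Sym^b\fh^1(C)=0$ for $b>2g$, and in the range of interest ($e\le 4g-5$) one checks that the truncation at $i$ is consistent; part (ii) is not strictly needed for this proposition but will be used later to rewrite things in terms of $\Jac(C)^2$ and symmetric powers of $C$ when passing to Theorem \ref{main thm P}. The only subtlety is bookkeeping: all the identities live a priori in $\widehat{K_0}$ because $\chi$ is defined there and because the factor $1/((1-\LL)^2(1-\LL^2))$ is a priori only a formal power series, but since the combination on the right is in fact a polynomial in $\LL$ with the geometric sums being genuine finite sums, the identity already holds in $K_0(\CHM^{\eff}(\k,\QQ))$ (and a fortiori in its completion). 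The main obstacle is thus simply carrying out the geometric-series bookkeeping correctly and recognizing the factored form; there is no conceptual difficulty, and no finite-dimensionality input is needed at this stage — that enters only when one later applies Proposition \ref{prop:UpgradeK0toCHM} or Corollary \ref{cor lifting effective identities} to lift to an isomorphism of Chow motives.
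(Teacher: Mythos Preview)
Your plan is correct and matches the paper's proof essentially step for step: start from Lemma~\ref{lemma:MotivePairK0}, expand $\chi(C^{(j)})$ via the decomposition $\fh(C)\simeq\QQ\oplus\fh^1(C)\oplus\QQ(1)$, swap the sums over $j$ and $b$, and then evaluate the resulting inner sum in $j$ as a closed-form rational function by a direct telescoping computation. One small correction: Lemma~\ref{lemma:Kunnemann} is \emph{not} needed here at all---the bound $b\le i$ is automatic from $b\le j\le i$ in the double sum, so no vanishing of $\Sym^b\fh^1(C)$ is required to truncate; K\"unnemann's relations only enter later, in the proof of Corollary~\ref{cor Chow motives pairs}.
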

\begin{proof}
	Note that in $\CHM(\k, \QQ)$,
	 \[\fh(C^{(j)})\simeq \Sym^j(\fh(C))\simeq\Sym^j(\QQ\oplus \fh^1(C)\oplus \QQ(1))\simeq \bigoplus_{a+b+c=j}\Sym^b\fh^1(C)\otimes \QQ(c).\]
	Taking their classes in $K_0(\CHM(\k, \QQ))$, we obtain that  \[\chi(C^{(j)})=\sum_{a+b+c=j} [\Sym^b\fh^1(C)]\cdot \LL^c.\]
After plugging this into the formula in Lemma \ref{lemma:MotivePairK0} and exchanging the two summations, we obtain that
\[
\chi(\cP_e^i)= \chi(\Jac(C))\sum_{b=0}^i[\Sym^b(\fh^1(C))] \cdot\sum_{j=b}^{i}\frac{(1-\LL^{j-b+1})(\LL^{j}-\LL^{e+g-2j-1})}{(1-\LL)^{2}}.
\]
We then compute
\begin{align*}
(1-\LL^{2}) \sum_{j=b}^{i}(1-\LL^{j-b+1})(\LL^{j}-\LL^{e+g-2j-1})= & \LL^{b}+\LL^{b+1}-\LL^{i+1}-\LL^{i+2}-\LL^{b+1}+\LL^{2i-b+3}\\ 
\vspace{-1cm}
&-\LL^{e+g-2i-1} +\LL^{e+g-2b+2}+\LL^{e+g-b-i}\\ 
& +\LL^{e+g-b-i+1}-\LL^{e+g-2b+2}-\LL^{e+g-2}\\
= &(\LL^{b}-\LL^{e+g-1-2i})(1-\LL^{i-b+1})(1-\LL^{i-b+2})  
\end{align*}  
which proves the claimed formula.
\end{proof}

\begin{defn}
	\label{def:Q}
	Fix $g\geq 2$.
Let $i, e, b$ be positive integers satisfying $b\leq i<e/2$. Define
\begin{equation}
	Q_{i, e, b}(T):= \frac{(T^b-T^{e+g-1-2i})(1-T^{i-b+1})(1-T^{i-b+2})}{(1-T)^2(1-T^2)} \in \ZZ[T],
\end{equation}
which is a polynomial with integral coefficients. The coefficients are positive if $e+g-1-2i> b$ and are negative  if $e+g-1-2i< b$.
\end{defn}

The following elementary result is crucial for our purpose:
\begin{lemma}
	\label{lemma:PositiveCoeff}
Assume that $e+g-1-2i<b\leq i< \lfloor \frac{e}{2}\rfloor \leq 2g-3$, which implies that $b\geq g+1$. The polynomial 
\[R_{i,e,b}(T):=T^{b-g}Q_{i,e,b}(T)+Q_{i,e,2g-b}(T)\]
has positive coefficients.
\end{lemma}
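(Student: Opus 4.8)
The plan is to reduce the claim to a concrete statement about two rational functions and then verify positivity of the sum of their power series expansions term by term. Recall from Definition \ref{def:Q} that under the hypothesis $e+g-1-2i < b$, the polynomial $Q_{i,e,b}(T)$ has \emph{negative} coefficients, while $Q_{i,e,2g-b}(T)$ has \emph{positive} coefficients (since $b > g$ forces $2g-b < b \le i$, hence one checks $e+g-1-2i > 2g-b$ using $b \ge g+1$); so $R_{i,e,b}(T)$ is a sum of one "bad" term and one "good" term, and we must show the good term dominates, even after the shift by $T^{b-g}$.

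First I would write out $Q_{i,e,b}(T)$ explicitly. Since $\frac{(1-T^{i-b+1})(1-T^{i-b+2})}{(1-T)(1-T^2)}$ is the Poincaré series of $\PP^{i-b}\times\PP^{i-b+1}$ (or more precisely a Gaussian binomial coefficient $\binom{i-b+2}{2}_T$, which has nonnegative coefficients symmetric about its midpoint), I can write $Q_{i,e,b}(T) = -(T^{e+g-1-2i}-T^b)\cdot \frac{1}{1-T}\cdot G_b(T)$ where $G_b(T)=\frac{(1-T^{i-b+1})(1-T^{i-b+2})}{(1-T)(1-T^2)}\in\NN[T]$ has degree $2i-2b+1$. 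The factor $\frac{T^{e+g-1-2i}-T^b}{1-T}$, when $e+g-1-2i<b$, equals $-(T^{e+g-1-2i}+T^{e+g-2i}+\dots+T^{b-1})$, a block of consecutive monomials with coefficient $-1$. So $Q_{i,e,b}(T) = \big(T^{e+g-1-2i}+\dots+T^{b-1}\big)G_b(T)$: it is \emph{minus} a nonnegative polynomial. Thus $R_{i,e,b}(T) = Q_{i,e,2g-b}(T) - T^{b-g}\big(T^{e+g-1-2i}+\dots+T^{b-1}\big)G_b(T)$, and (using $Q_{i,e,2g-b}\ge 0$ with the analogous block description) the task becomes: the nonnegative polynomial $\big(T^{2g-b}+\dots+T^{e+g-2i-2}\big)G_{2g-b}(T)$ dominates coefficientwise the nonnegative polynomial $\big(T^{b+e-g-1-2i}+\dots+T^{2b-g-1}\big)G_b(T)$. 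Note the two "blocks" have the \emph{same length} $b-(e+g-1-2i) =: \ell$, and $G_{2g-b}$ has degree $2i-2(2g-b)+1 = 2i+2b-4g+1$ which exceeds $\deg G_b = 2i-2b+1$ by $4(b-g)\ge 4$, so the positive term also has a longer "tail" — this is the structural reason the inequality should hold.

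The main step, and the main obstacle, is the coefficientwise comparison of the two products of a block of consecutive $1$'s with a Gaussian binomial. I would make this precise by computing, for each degree $n$, the coefficient of $T^n$ in each product as $\sum_{j} [\text{coeff of } T^{n-j-(\text{block start})}]$ in $G_{\bullet}$ over the $\ell$ consecutive values of $j$ in the block; i.e. each coefficient is a \emph{sum of $\ell$ consecutive coefficients} of the relevant Gaussian binomial, read off starting from an appropriate offset determined by $e,g,i,b$. So everything reduces to: windows of width $\ell$ of consecutive coefficients of $G_{2g-b}$ dominate the correspondingly-positioned windows of $G_b$. Here I would exploit the two standard facts about Gaussian binomials $\binom{m+2}{2}_T$: the coefficients are symmetric and \emph{unimodal}, and $\binom{(2i-2b+4)+\text{shift}}{2}_T$ dominates $\binom{2i-2b+4}{2}_T$ coefficientwise in the sense that the larger one, being unimodal with a higher and wider plateau, has every length-$\ell$ window sum at least that of the smaller (after aligning them compatibly with how the blocks are positioned — the arithmetic relating the block endpoints to $g-b$ is exactly what makes the alignment work out). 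I expect the bookkeeping of offsets — translating the four parameters $i,e,g,b$ into the precise positions of the windows and checking that the symmetry/unimodality comparison applies on the nose rather than off by one — to be the delicate part; a clean way to organize it is to induct on $b-g\ge 1$ (equivalently, relate $R_{i,e,b}$ to $R_{i,e,b-1}$ via the three-term recursion $G_{b-1}(T) = G_b(T) + (\text{monomials})$ coming from $(1-T^{i-b+2})=(1-T^{i-b+1})+(T^{i-b+1}-T^{i-b+2})$), reducing to the base case $b=g+1$ where the bad block has length $\ell=b-(e+g-1-2i)$ with $b-g=1$ and the estimate is essentially immediate.
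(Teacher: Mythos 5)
Your reduction of $Q_{i,e,b}$ to (minus) a product of a block of consecutive monomials with the Gaussian binomial $G_b(T)=\frac{(1-T^{i-b+1})(1-T^{i-b+2})}{(1-T)(1-T^2)}$ is correct and does explain structurally why the lemma should hold (the ``good'' term has both a wider block and a taller, wider binomial). But as a proof it has two problems. First, a concrete error in the setup: the two blocks do \emph{not} have the same length. The bad block $T^{e+g-1-2i}+\dots+T^{b-1}$ has $b-(e+g-1-2i)$ terms, while the good block coming from $Q_{i,e,2g-b}$ is $T^{2g-b}+\dots+T^{e+g-2i-2}$, with $(e+g-1-2i)-(2g-b)=b+e-g-2i-1$ terms; the difference is $2(e-2i-1)\geq 0$, so they agree only in the boundary case $e=2i+1$. (Also $\deg G_b=2(i-b)$, not $2i-2b+1$.) This breaks the ``compare windows of the same width $\ell$'' formulation on which your plan rests. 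Second, and more seriously, the step you defer to ``standard facts'' --- that every width-$\ell$ window sum of the coefficients of $G_{2g-b}$ dominates the correspondingly positioned window sum of $G_b$ --- is essentially the content of the lemma and does not follow from symmetry and unimodality alone: whether a window of the larger binomial beats a window of the smaller depends entirely on where the two windows sit relative to the respective peaks, and that offset arithmetic is exactly the difficulty. The proposed induction on $b-g$ is likewise only sketched, and the base case is not immediate once the block lengths are corrected. So the proposal identifies the right objects but does not close the argument.

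For comparison, the paper's proof avoids this combinatorics entirely: it manipulates the closed rational expression for $T^{b-g}Q_{i,e,b}+Q_{i,e,2g-b}$ by two coefficientwise weakenings of the numerator --- replacing $1-T^{i-2g+b+2}$ by $1-T^{i-b+2}$ (using $b>g$) and then $T^{e+g-1-2i}$ by $T^{g}$ (using $e-2i\geq 1$) --- after which the numerator collapses to $T^{2g-b}(1-T^{i-b+2})(1-T^{b-g})(1-T^{2b-2g})$, which is manifestly divisible by $(1-T)^2(1-T^2)$ with nonnegative quotient. To salvage your route you would need to prove the window-domination statement with the corrected, unequal block lengths and explicit offsets, which is likely more work than the paper's direct computation.
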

\begin{proof}
	Define a partial order $\geq $ on $\ZZ[T]$ by  claiming that $P(T)\geq 0$ if and only if all coefficients of $P(T)$ are positive. Then
	\begin{align*}
		&T^{b-g}Q_{i,e,b}(T)+Q_{i,e,2g-b}(T)\\
		&=\frac{(T^{2g-b}-T^{e+g-1-2i})(1-T^{i-2g+b+1})(1-T^{i-2g+b+2})-T^{b-g}(T^{e+g-1-2i}-T^b)(1-T^{i-b+1})(1-T^{i-b+2})}{(1-T)^2(1-T^2)} \\
		&\geq \frac{(T^{2g-b}-T^{e+g-1-2i})(1-T^{i-2g+b+1})(1-T^{i-b+2})-T^{b-g}(T^{e+g-1-2i}-T^b)(1-T^{i-b+1})(1-T^{i-b+2})}{(1-T)^2(1-T^2)} \\
		&= \frac{(1-T^{i-b+2})}{(1-T)^2(1-T^2)}\left[(T^{2g-b}-T^{e+g-1-2i})(1-T^{i-2g+b+1})-T^{b-g}(T^{e+g-1-2i}-T^b)(1-T^{i-b+1})\right] \\
		&\geq \frac{(1-T^{i-b+2})}{(1-T)^2(1-T^2)}\left[(T^{2g-b}-T^{g})(1-T^{i-2g+b+1})-T^{b-g}(T^{g}-T^b)(1-T^{i-b+1})\right] \\
		&= \frac{(1-T^{i-b+2})T^{2g-b}}{(1-T)^2(1-T^2)}\left[(1-T^{b-g})(1-T^{i-2g+b+1})-(T^{2b-2g}-T^{3b-3g})(1-T^{i-b+1})\right] \\
		&= \frac{(1-T^{i-b+2})T^{2g-b}(1-T^{b-g})(1-T^{2b-2g})}{(1-T)^2(1-T^2)} \\
		&\geq 0,
	\end{align*}
	where the first inequality uses $b>g$ and the second inequality uses $e-2i\geq  1$. 
\end{proof}

In the sequel, for a polynomial $P(T)=\sum_{j=0}^n m_jT^j$ with positive integral coefficients, we denote by $P(\QQ(1))$ the effective Tate motive $\bigoplus_{j=0}^n \QQ(j)^{\oplus m_j}$.
\begin{cor}\label{cor Chow motives pairs}
	Assume that $i<\lfloor \frac{e}{2}\rfloor\leq 2g-3$,
	then the rational Chow motive of $\cP_e^i$ is as follows:
	\begin{enumerate}[label=\emph{(\roman*)}, leftmargin=0.7cm]
		\item If $3i\leq e+g-1$, then 
		\begin{equation*}
			\fh(\cP_e^i)\simeq \fh(\Jac(C))\otimes \left(\bigoplus_{b=0}^i \Sym^b(\fh^1(C))\otimes Q_{i,e,b}(\QQ(1))\right).
		\end{equation*}
		\item If $3i> e+g-1$, then $0<2g-i\leq g-e+2i<g<g+e-2i\leq i$, and 
		\begin{equation*}
						\fh(\cP_e^i)\simeq \fh(\Jac(C))\otimes \left(\bigoplus_{\substack{b<2g-i \text{ or}\\ |b-g|<e-2i}} \Sym^b(\fh^1(C))\otimes Q_{i,e,b}(\QQ(1))\oplus \bigoplus_{b=2g-i}^{g-e+2i} \Sym^b(\fh^1(C))\otimes R_{i,e,2g-b}(\QQ(1))\right).
		\end{equation*}
	\end{enumerate}
		Here $Q_{i,e,b}$ is the polynomial in Definition \ref{def:Q} and $R_{i,e,2g-b}$ is the polynomial defined in Lemma~\ref{lemma:PositiveCoeff}.
\end{cor}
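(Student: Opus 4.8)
The plan is to derive this corollary directly from Proposition \ref{prop:MotivePairK0bis} by upgrading the identity in $\widehat{K_0}(\CHM(\k,\QQ))$ to an isomorphism of Chow motives via Proposition \ref{prop:UpgradeK0toCHM} (equivalently Corollary \ref{cor lifting effective identities}), after rewriting the right-hand side as a positive combination of Kimura finite dimensional motives. All motives in sight — $\fh(\Jac(C))$, the symmetric powers $\Sym^b(\fh^1(C))$, and Tate motives — are abelian, hence Kimura finite dimensional by \cite[Th\'eor\`eme 2.8]{AndreBourbaki}, so the only real issue is positivity of the coefficients.

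First I would treat case (i): under the hypothesis $3i \leq e+g-1$ we have $e+g-1-2i \geq b$ for all $b \leq i$, so by Definition \ref{def:Q} each polynomial $Q_{i,e,b}(T)$ has nonnegative integral coefficients. Then the formula of Proposition \ref{prop:MotivePairK0bis} already expresses $\chi(\cP_e^i)$ as $[\,\fh(\Jac(C)) \otimes \bigoplus_{b=0}^i \Sym^b(\fh^1(C)) \otimes Q_{i,e,b}(\QQ(1))\,]$, and Proposition \ref{prop:UpgradeK0toCHM} gives the claimed isomorphism. For case (i) one should also double-check the interval where this is being applied is compatible with the hypothesis $i < \lfloor e/2 \rfloor \leq 2g-3$; it only narrows the range, so no difficulty arises.

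The substance is in case (ii), where $3i > e+g-1$. Here the arithmetic $0 < 2g-i \leq g-e+2i < g < g+e-2i \leq i$ needs to be verified from $e+g-1-2i < i$ and $i < \lfloor e/2 \rfloor \leq 2g-3$; this is a short chain of inequalities. The point is that the coefficient polynomial $Q_{i,e,b}(T)$ in Proposition \ref{prop:MotivePairK0bis} has nonnegative coefficients exactly when $b < e+g-1-2i$, i.e.\ when $b < g + (e-1-2i) \leq g$ or more precisely when $b$ is small, and has \emph{negative} coefficients when $b > e+g-1-2i$. To cure the negativity I would split the sum $\sum_{b=0}^i$ into the ``good'' range (those $b$ with $b < 2g-i$ or $|b-g| < e-2i$, where $Q_{i,e,b}$ is already nonnegative) and the ``bad'' range $2g-i \leq b \leq g-e+2i$. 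On the bad range I pair the term indexed by $b$ with the term indexed by $2g-b$: by Lemma \ref{lemma:Kunnemann}(ii), $[\Sym^b(\fh^1(C))] = [\Sym^{2g-b}(\fh^1(C))] \cdot \LL^{b-g}$ in $K_0$, so the combined contribution of the two indices is $[\Sym^{2g-b}(\fh^1(C))] \cdot (\LL^{b-g} Q_{i,e,b}(\LL) + Q_{i,e,2g-b}(\LL)) = [\Sym^{2g-b}(\fh^1(C))] \cdot R_{i,e,2g-b}(\LL)$, and Lemma \ref{lemma:PositiveCoeff} (whose hypotheses $e+g-1-2i < b \leq i < \lfloor e/2 \rfloor \leq 2g-3$ are met on the bad range) says $R_{i,e,2g-b}$ has positive coefficients. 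Re-indexing $b \mapsto 2g-b$ on the second summand so that $\Sym^b(\fh^1(C))$ appears with $b$ running over $2g-i \leq b \leq g-e+2i$, and using Lemma \ref{lemma:Kunnemann}(i) to discard any $\Sym^b$ with $b > 2g$, assembles exactly the expression in the statement. Then Proposition \ref{prop:UpgradeK0toCHM} lifts it to $\CHM(\k,\QQ)$.

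The main obstacle is purely bookkeeping: making sure the index ranges match up precisely — that the ``good'' set $\{b < 2g-i\} \cup \{|b-g| < e-2i\}$ together with the pairing on the ``bad'' set $\{2g-i \leq b \leq g-e+2i\}$ accounts for every $b \in \{0,1,\dots,i\}$ exactly once (noting each bad $b$ is paired with $2g-b$, which lies outside $[0,i]$ and contributes a $\Sym^{2g-b}$ with $2g-b$ small, hence is not double-counted), and that the boundary cases $b = 2g-i$ and $b = g-e+2i$ land on the correct side. There is also a minor point that Proposition \ref{prop:MotivePairK0bis} is stated in $\widehat{K_0}(\CHM(\k,\QQ))$ rather than $\widehat{K_0}(\CHM^{\eff}(\k,\QQ))$, but since every term we end up with is effective, the identity holds in the effective completion and Proposition \ref{prop:UpgradeK0toCHM} applies. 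None of this is deep, but it must be done carefully to land on the stated closed form.
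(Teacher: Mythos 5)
Your proposal is correct and follows essentially the same route as the paper: reduce to an identity in $K_0$ via Proposition \ref{prop:UpgradeK0toCHM}, start from Proposition \ref{prop:MotivePairK0bis}, note positivity of the $Q_{i,e,b}$ in case (i), and in case (ii) absorb each negative term into its partner at $2g-b$ via Lemma \ref{lemma:Kunnemann} and Lemma \ref{lemma:PositiveCoeff}. One small bookkeeping slip: the range you label ``bad'', namely $2g-i\leq b\leq g-e+2i$, is in fact the partner range where $Q_{i,e,b}\geq 0$ (the negativity occurs for $b\in[g+e-2i,\,i]$, where $b>g$ so that $\LL^{b-g}$ is effective), and the combined contribution of a bad $b$ and its partner is $[\Sym^{2g-b}(\fh^1(C))]\cdot R_{i,e,b}(\LL)$ rather than $R_{i,e,2g-b}(\LL)$ — the substitution $b\mapsto 2g-b$ then yields exactly the stated formula, so the argument goes through as you describe.
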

\begin{proof}
	By Proposition \ref{prop:UpgradeK0toCHM}, it suffices to check that both sides agree in $K_0(\CHM(\k, \QQ))$. For case (i), it follows directly from Proposition \ref{prop:MotivePairK0bis} and the definition of $Q_{i,e,b}$ in Definition~\ref{def:Q}. For case (ii), the non-positive part in the expression of $\chi(\cP_e^i)$ in Proposition \ref{prop:MotivePairK0bis} is
	\[\chi(\Jac(C))\sum_{b=g+e-2i}^i[\Sym^b(\fh^1(C))]\cdot Q_{i,e,b}(\LL).\]
	It can be absorbed by the following effective part of $\chi(\cP_e^i)$ with $b$ running in the symmetric range with respect to $g$:
		\[\chi(\Jac(C))\sum_{b=2g-i}^{g-e+2i}[\Sym^b(\fh^1(C))]\cdot Q_{i,e,b}(\LL).\]
		Indeed, for any $b\in [e+g-2i, i]$,  by Lemma \ref{lemma:Kunnemann},
		\begin{align*}
		&[\Sym^b(\fh^1(C))]\cdot Q_{i,e,b}(\LL)+[\Sym^{2g-b}(\fh^1(C))]\cdot Q_{i,e,2g-b}(\LL)\\
		&=[\Sym^{2g-b}(\fh^1(C))]\LL^{b-g}\cdot Q_{i,e,b}(\LL)+[\Sym^{2g-b}(\fh^1(C))]\cdot Q_{i,e,2g-b}(\LL)\\
		&=[\Sym^{2g-b}(\fh^1(C))]\cdot R_{i,e,b}(\LL),
		\end{align*}
		which is a positive integral linear combination of effective motives by Lemma \ref{lemma:PositiveCoeff}.
\end{proof}

\subsection{A formula in terms of symmetric powers of the curve and its Jacobian}
In this section, we compute the rational Chow motive of $\cP_e^i$ again, but in terms of symmetric powers $C^{(k)}$ and $\Jac(C)$. The following basic fact will be our key tool:

\begin{lemma}[{\cite[Proposition 1.6(i)]{GL}}]
	\label{lemma:ReduceSymmetricPowers}
	For any $g\leq j\leq 2g-2$, we have the following identity in $K_0(\CHM(\k, \QQ))$:
	\[\chi(C^{(j)})=\chi(C^{(2g-2-j)})\cdot \LL^{j+1-g}+\chi(\Jac(C))\chi(\PP^{j-g}).\]
\end{lemma}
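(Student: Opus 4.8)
\textbf{Proof plan for Lemma \ref{lemma:ReduceSymmetricPowers}.}
The plan is to reduce this to a statement about the motive $\fh^1(C)$ and then invoke K\"unnemann's relation (Lemma \ref{lemma:Kunnemann}). First I would expand $\fh(C^{(j)})\simeq \Sym^j(\fh(C))$ using the decomposition $\fh(C)\simeq \QQ\oplus \fh^1(C)\oplus\QQ(1)$, exactly as in the proof of Proposition \ref{prop:MotivePairK0bis}, to get
\[
\chi(C^{(j)})=\sum_{a+b+c=j}[\Sym^b(\fh^1(C))]\cdot\LL^c=\sum_{b=0}^{j}[\Sym^b(\fh^1(C))]\cdot\bigl(\LL^0+\LL^1+\cdots+\LL^{j-b}\bigr),
\]
using that the number of pairs $(a,c)$ with $a+c=j-b$, $a,c\geq 0$ contributes the truncated geometric series $\frac{1-\LL^{j-b+1}}{1-\LL}$. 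So it suffices to prove the identity after grouping terms according to the value of $b$.

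Next I would split the sum over $b$ at $b=g$. For $0\leq b\leq g-1$ (equivalently $2g-2-j\leq \ldots$; note $j\leq 2g-2$ forces $j-g\leq g-2$, so when $b\leq j-g$... actually the cleaner cut is $b\le j$), I use Lemma \ref{lemma:Kunnemann}(i),(ii): for $b>2g$ the summand vanishes, and for $g\le b\le 2g$ one has $[\Sym^b\fh^1(C)]=[\Sym^{2g-b}\fh^1(C)]\cdot\LL^{b-g}$. Since $j\leq 2g-2<2g$, every $b$ in the range $0\le b\le j$ satisfies $b\le 2g$, so no vanishing occurs, but the terms with $g\le b\le j$ can be rewritten via (ii). The strategy is then a bookkeeping exercise: on the right-hand side, $\chi(C^{(2g-2-j)})\cdot\LL^{j+1-g}$ contributes $\sum_{b=0}^{2g-2-j}[\Sym^b\fh^1(C)]\cdot\LL^{j+1-g}\cdot\frac{1-\LL^{2g-2-j-b+1}}{1-\LL}$, and $\chi(\Jac(C))\chi(\PP^{j-g})$ contributes $\chi(\Jac(C))\cdot\frac{1-\LL^{j-g+1}}{1-\LL}$. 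One matches the coefficient of each $[\Sym^b\fh^1(C)]$ on both sides: for $0\le b\le 2g-2-j$ the relation $\frac{1-\LL^{j-b+1}}{1-\LL}=\frac{1-\LL^{2g-2-j-b+1}}{1-\LL}\LL^{j+1-g}+(\text{correction})$ must be checked, and the leftover for $2g-1-j\le b\le j$ must be matched against the $[\Sym^0\fh^1(C)]=[\Jac(C)]$-free part --- here one also uses $[\Sym^0\fh^1(C)]=\QQ$ and that $\chi(\Jac(C))=\sum_{b=0}^{2g}\binom{2g}{b}\ldots$ is \emph{not} needed; rather one only needs the single term $b=0$ cross-checked, since $\chi(\Jac(C))$ already packages $\bigoplus_b \Sym^b\fh^1(C)$. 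Concretely: $\chi(\Jac(C))=\sum_{b\geq 0}[\Sym^b\fh^1(C)]$ (from $\fh(\Jac(C))\simeq\bigoplus_b\Sym^b\fh^1(C)$, the standard motivic decomposition of an abelian variety), so the right-hand side is $\sum_{b\ge 0}[\Sym^b\fh^1(C)]\bigl(\LL^{j+1-g}\,[\![0\le b\le 2g-2-j]\!]\,\frac{1-\LL^{2g-1-j-b}}{1-\LL}+\frac{1-\LL^{j-g+1}}{1-\LL}\bigr)$, and one verifies term by term that this equals $\sum_{b\ge 0}[\Sym^b\fh^1(C)]\frac{1-\LL^{j-b+1}}{1-\LL}$, using Lemma \ref{lemma:Kunnemann}(ii) to rewrite $[\Sym^b\fh^1(C)]$ for $b\ge g$ in terms of $[\Sym^{2g-b}\fh^1(C)]$ where the naive formula would otherwise give a discrepancy.

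The main obstacle is purely combinatorial: one must carefully track, for each $b$, which of Lemma \ref{lemma:Kunnemann}(i)/(ii) applies and verify that after the substitution $[\Sym^b\fh^1(C)]\mapsto[\Sym^{2g-b}\fh^1(C)]\LL^{b-g}$ the two finite geometric series on the right reassemble into the single series $\frac{1-\LL^{j-b+1}}{1-\LL}$ governing the left --- in particular the ``reflection'' $b\leftrightarrow 2g-b$ is what forces the appearance of the term $\chi(C^{(2g-2-j)})\LL^{j+1-g}$ (reflecting the range $b\le j$ about $b=g$ lands in the range $2g-b\le 2g-2-j$... up to the off-by constants, which is exactly where the $2g-2-j$ and the twist $\LL^{j+1-g}$ come from) and the term $\chi(\Jac(C))\chi(\PP^{j-g})$ collects the ``central'' contributions near $b=g$ that have no reflection partner within the truncation. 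Since everything lives in $K_0(\CHM(\k,\QQ))$, no motivic subtlety beyond Lemma \ref{lemma:Kunnemann} is needed; alternatively, one could cite \cite{GL} directly, but the above gives a self-contained derivation.
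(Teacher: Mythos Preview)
The paper does not actually prove this lemma; it is stated with attribution to \cite[Proposition 1.6(i)]{GL} and used as a black box. Your plan therefore supplies a self-contained argument where the paper gives none, and it is in the spirit of the paper's other computations (notably the proof of Proposition~\ref{prop:MotivePairK0bis} and the use of Lemma~\ref{lemma:Kunnemann}).

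Your approach is correct. Once you expand both sides in the ``basis'' $[\Sym^c\fh^1(C)]$ for $0\le c\le g$ (using Lemma~\ref{lemma:Kunnemann}(ii) to fold the range $g<b\le 2g$ back, and the decomposition $\chi(\Jac(C))=\sum_{b=0}^{2g}[\Sym^b\fh^1(C)]$), the coefficient-matching is a short computation: for $0\le c\le 2g-2-j$ both sides give $\frac{1-\LL^{j-c+1}}{1-\LL}$; for $2g-1-j\le c\le g-1$ both sides give $\frac{1-\LL^{j-c+1}+\LL^{g-c}-\LL^{j-g+1}}{1-\LL}$; and for $c=g$ both give $\frac{1-\LL^{j-g+1}}{1-\LL}$. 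I would recommend simply writing out these three cases cleanly rather than describing the bookkeeping in prose --- the hedging (``up to the off-by constants'', ``actually the cleaner cut is\dots'') obscures what is in fact a two-line verification per case.

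For context, the argument in \cite{GL} is more geometric: it uses the Abel--Jacobi map $C^{(j)}\to\Pic^j(C)$ together with Serre duality to relate the special divisors in $C^{(j)}$ to $C^{(2g-2-j)}$. Your motivic-algebraic route via K\"unnemann is arguably more uniform with the rest of the paper and avoids any Brill--Noether input.
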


\begin{rmk}
	 Corollary \ref{cor lifting effective identities} allows one to  lift the equality in Lemma \ref{lemma:ReduceSymmetricPowers}  to an isomorphism in the category of  rational Chow motives. Note that the result for integral Chow groups is recent obtained by Jiang \cite[Section 5.1]{Jiang19}.
\end{rmk}

In the sequel, we impose the numerical constraint that 
\[	2i< e\leq 4g-5,\]
which will be sufficient for all the pair moduli spaces appearing in Theorem \ref{main thm H}. In view of Corollary \ref{cor:MotivePairSmalli}, we assume furthermore that 
\[3i>e+g-1,\]
which implies that $i\geq g$ and $e\geq 2g+1$. To summarise,  from now on, we work under the following hypothesis 
\begin{equation}
\label{eq:NumercialConstraints}
3i\geq e+g \quad \text{  and  }\quad 2g\leq 2i<e\leq 4g-5.
\end{equation}

\begin{prop}
	\label{prop:MotivePairK0ter}
	Under the above numerical assumptions, 
	the following identity holds in $\widehat{K_0}(\CHM^{\eff}(\k,\QQ))$ :
	\begin{align*}
	\chi(\cP_e^i)= & \:\chi(\Jac(C))\chi(C^{(g-1)})\frac{\LL^{g-1}-\LL^{e-g+1}}{1-\LL}+\chi(\Jac(C))\sum_{k=0}^{2g-3-i}\chi(C^{(k)})\frac{\LL^k-\LL^{e+g-2k-1}}{1-\LL}\\
	&+\chi(\Jac(C))\sum_{k=2g-2-i}^{g-2}\chi(C^{(k)})(\LL^{3g-3-2k}+\LL^{k})\frac{1-\LL^{e-2g+2}}{1-\LL}\\
	&+\chi(\Jac(C)^2)\frac{\LL^g(1-\LL^{e-2i-1})(1-\LL^{i+1-g})(1-\LL^{i+2-g})}{(1-\LL)^2(1-\LL^2)}.
	\end{align*}
\end{prop}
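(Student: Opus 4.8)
The plan is to start from the formula for $\chi(\cP^i_e)$ given in Lemma \ref{lemma:MotivePairK0}, namely
\[
\chi(\cP^i_{e}) = \sum_{j=0}^i \chi(\Jac(C) \times C^{(j)}) \frac{\LL^{e+g - 2j -1} - \LL^j}{\LL - 1},
\]
and to rewrite it as a manifestly effective (positive-coefficient) expression by using Lemma \ref{lemma:ReduceSymmetricPowers} to reduce those symmetric powers $C^{(j)}$ with $j > 2g-2$, and more importantly those with $g-1 < j \leq 2g-2$ where the naive reduction would produce negative Tate powers. Under the numerical hypothesis \eqref{eq:NumercialConstraints} we have $i \geq g$, so the sum over $j$ runs past the range $[0,g-1]$ where $\chi(C^{(j)})$ is already ``stable''; the terms with $j \geq g$ are exactly the ones that need massaging.

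First I would split the sum $\sum_{j=0}^i$ into the ranges $0 \leq j \leq g-1$, and $g \leq j \leq i$. For each $j$ in the second range, Lemma \ref{lemma:ReduceSymmetricPowers} (applied with the index $j$, noting $g \leq j \leq i < e/2 \leq 2g-2$ under our constraints, so $j$ does lie in $[g, 2g-2]$) gives
\[
\chi(C^{(j)}) = \chi(C^{(2g-2-j)}) \LL^{j+1-g} + \chi(\Jac(C)) \chi(\PP^{j-g}).
\]
Substituting this produces two contributions: a ``$\Jac(C) \times C^{(2g-2-j)}$'' contribution, which after re-indexing $k := 2g-2-j$ runs over $k \in [2g-2-i, g-2]$, and a ``$\Jac(C)^2$'' contribution coming from the $\chi(\PP^{j-g})$ terms. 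I would then collect all three pieces: the already-effective part from $j \in [0,g-1]$ gives the $\sum_{k=0}^{g-1}$ portion — which one separates into $k=g-1$ (the first line of the claimed formula, after rewriting $\LL^{e-g+1}=\LL^{e+g-2(g-1)-1}$) and $k \in [0,g-2]$; the $j$-range $[g,i]$ contributes via the substitution to the $C^{(2g-2-j)}$ terms. Crucially, for those indices $k$ with $2g-2-i \leq k \leq g-2$, the coefficient is \emph{not} simply $(\LL^k - \LL^{e+g-2k-1})/(1-\LL)$ but gets an extra piece from the original $j = 2g-2-k$ term of the Lemma-\ref{lemma:MotivePairK0} sum; combining $\LL^{j+1-g}\cdot\frac{\LL^{e+g-2j-1}-\LL^j}{\LL-1}$ (with $j = 2g-2-k$) with the contribution already present yields the coefficient $(\LL^{3g-3-2k}+\LL^k)\frac{1-\LL^{e-2g+2}}{1-\LL}$ appearing on the third line — this is the key bookkeeping identity and I would verify it by a direct algebraic manipulation, being careful that $3g-3-2k = (e+g-2j-1)+(j+1-g) - (\text{correction})$ works out. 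The $\Jac(C)^2$ terms sum up, over $j \in [g,i]$, to $\chi(\Jac(C)^2)\sum_{j=g}^i \chi(\PP^{j-g})\frac{\LL^{e+g-2j-1}-\LL^j}{\LL-1}$, and a geometric-series computation (entirely analogous to the one in the proof of Proposition \ref{prop:MotivePairK0bis}, clearing $(1-\LL)^2(1-\LL^2)$) should collapse this to $\chi(\Jac(C)^2)\frac{\LL^g(1-\LL^{e-2i-1})(1-\LL^{i+1-g})(1-\LL^{i+2-g})}{(1-\LL)^2(1-\LL^2)}$, the fourth line.

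The main obstacle I anticipate is purely the combinatorial bookkeeping: keeping the boundary indices of the three sums straight (in particular checking that $2g-3-i$, $2g-2-i$, $g-2$, $g-1$ are the correct cutoffs, which relies delicately on the inequalities $2g \leq 2i < e \leq 4g-5$ to guarantee, e.g., $2g-2-i \leq g-2$ and $2g-3-i \geq 0$ or an empty-sum convention when not), and making sure that each term $\chi(C^{(j)})$ for $j$ in the problematic middle range is reduced exactly once and the leftover $\LL$-power coefficients recombine into the stated symmetric form $(\LL^{3g-3-2k}+\LL^k)$. I would organize this by first writing $\chi(\cP^i_e)$ as a double sum, performing the substitution termwise, then regrouping by the symmetric-power index $k$ appearing in each resulting term, and finally matching coefficients line by line against the claimed formula; no deep input beyond Lemmas \ref{lemma:MotivePairK0} and \ref{lemma:ReduceSymmetricPowers} and elementary manipulation of rational functions in $\LL$ is needed, so the proof is a (somewhat lengthy) verification rather than an argument requiring a new idea.
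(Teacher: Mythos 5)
Your proposal is correct and follows essentially the same route as the paper: split the sum in Lemma \ref{lemma:MotivePairK0} at $j=g$, apply Lemma \ref{lemma:ReduceSymmetricPowers} to the terms with $g\leq j\leq i$, re-index by $k=2g-2-j$, and observe that the recombined coefficient is $(\LL^{3g-3-2k}+\LL^{k})\tfrac{1-\LL^{e-2g+2}}{1-\LL}$ while the $\chi(\PP^{j-g})$ contributions collapse via the same geometric-series computation as in Proposition \ref{prop:MotivePairK0bis}. The key bookkeeping identity you single out is exactly the one the paper verifies, and it checks out.
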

\begin{proof}
Note that under these assumptions, we have $g \leq i \leq 2g -2$. Therefore, we apply Lemma \ref{lemma:ReduceSymmetricPowers} to the symmetric powers $C^{(j)}$ in the formula in Lemma~\ref{lemma:MotivePairK0} with $g \leq j \leq i$. It is then a straightforward computation to obtain the following formula
	\begin{align*}
	\chi(\cP_e^i)=& \:\chi(\Jac(C))\sum_{k=0}^{g-1}\chi(C^{(k)})\frac{\LL^k-\LL^{e+g-2k-1}}{1-\LL}\\
	&+\chi(\Jac(C))\sum_{k=2g-2-i}^{g-2}\chi(C^{(k)})\frac{\LL^{3g-3-2k}-\LL^{e-2g+2+k}}{1-\LL}\\
	&+\chi(\Jac(C)^2)\sum_{k=0}^{i-g}\frac{(1-\LL^{k+1})(\LL^{k+g}-\LL^{e-g-1-2k})}{(1-\LL)^2}\\
	=& \:\chi(\Jac(C))\chi(C^{(g-1)})\frac{\LL^{g-1}-\LL^{e-g+1}}{1-\LL}+\chi(\Jac(C))\sum_{k=0}^{2g-3-i}\chi(C^{(k)})\frac{\LL^k-\LL^{e+g-2k-1}}{1-\LL}\\
	&+\chi(\Jac(C))\sum_{k=2g-2-i}^{g-2}\chi(C^{(k)})\left(\frac{\LL^{3g-3-2k}-\LL^{e-2g+2+k}}{1-\LL}+\frac{\LL^k-\LL^{e+g-2k-1}}{1-\LL}\right)\\
	&+\chi(\Jac(C)^2)\sum_{k=0}^{i-g}\frac{(1-\LL^{k+1})(\LL^{k+g}-\LL^{e-g-1-2k})}{(1-\LL)^2}.\\
	\end{align*}
	Note that 
	\[\frac{\LL^{3g-3-2k}-\LL^{e-2g+2+k}}{1-\LL}+\frac{\LL^k-\LL^{e+g-2k-1}}{1-\LL}=(\LL^{3g-3-2k}+\LL^{k})\frac{1-\LL^{e-2g+2}}{1-\LL}.\]
	It remains to check that 
	\[\sum_{k=0}^{i-g}\frac{(1-\LL^{k+1})(\LL^{k+g}-\LL^{e-g-1-2k})}{(1-\LL)^2}=\frac{\LL^g(1-\LL^{e-2i-1})(1-\LL^{i+1-g})(1-\LL^{i+2-g})}{(1-\LL)^2(1-\LL^2)},\]
	which follows as in the computation at the end of Proposition \ref{prop:MotivePairK0bis}.
\end{proof}

We are now able to prove Theorem \ref{main thm P}.

\begin{proof}[Proof of Theorem \ref{main thm P}]
The formula in the first case is just a reformulation of Corollary \ref{cor:MotivePairSmalli}. In the second case, we apply Proposition \ref{prop:UpgradeK0toCHM} to Proposition \ref{prop:MotivePairK0ter}. 
\end{proof}

\section{Motives of moduli spaces of rank 3 Higgs bundles}
\label{sec:MotiveRk3Higgs}

\subsection{Higgs moduli spaces}

We recall that a \emph{Higgs bundle} on $C$ is a pair $(E,\Phi)$ consisting of a vector bundle $E$ and a \emph{Higgs field} $\Phi\colon E\to E\otimes \omega_C$ which is a $\mathcal{O}_C$-linear homomorphism. There is a notion of semistability for Higgs bundles which involves verifying an inequality of slopes for Higgs subbundles (i.e. vector subbundles which are invariant under the Higgs field). This enables the construction of the moduli space $\cM=\cM(n,d)$ of semistable rank $n$ degree $d$ Higgs bundles on $C$ as a quasi-projective variety via Geometric Invariant Theory. In the case where $n$ and $d$ are coprime, semistability and stability coincide and $\cM$ is smooth; moreover $\cM$ is a non-compact hyper-K\"{a}hler manifold over $\k = \CC$ \cite{Hitchin}.

\subsection{The scaling action}

There is a $\GG_m$-action on $\cM$ given by scaling the Higgs field which was used by Hitchin \cite{Hitchin} and Simpson \cite{Simpson} to study the geometry of $\cM$. They established that this $\GG_m$-action is semi-projective in the sense that
\begin{itemize}
\item the fixed locus is proper and 
\item the limit of $t \in \GG_m$ acting on any point in $\cM$ exists as $t$ tends to  zero;
\end{itemize}
for details on the proof, see \cite[Section 9]{HT_MirrorSym}. The flow under this action induces an associated Bia{\l}ynicki-Birula decomposition \cite{BB} and, when $\cM$ is smooth, this is a deformation retract to the fixed locus. Consequently, the Voevodsky motive of $\cM$ is pure \cite[Corollary 6.9]{HPL_Higgs} and we can consider its associated Chow motive (for details, see \cite[Theorem 2.4 and $\S$6.1.1]{FHPL-rank2}). 

For a point $(E,\Phi) \in \cM$ fixed by the $\GG_m$-action, either the Higgs field $\Phi$ is zero (in which case the underlying vector bundle is a semistable vector bundle) or the Higgs field is non-zero and so there is $\GG_m \subset \Aut(E)$ inducing a weight decomposition $E = \oplus_i E_i$ and with respect to this decomposition $\Phi$ is given by non-trivial homomorphisms $E_i \ra E_{i+1} \otimes \omega_C$. Consequently, if $\Phi \neq 0$, we obtain a chain of vector bundle homomorphisms
\[ E_{i_0} \ra E_{i_0+1} \otimes \omega_C \ra E_{i_0+2} \otimes \omega_C^{\otimes 2} \ra \cdots \ra E_{i_0 + m} \otimes \omega_C^{\otimes m}. \]
The fixed components with $\Phi \neq 0$ are then indexed by the discrete invariants of this chain (this is equivalent to fixing the ranks and degrees of the $E_i$). 

Once one fixes discrete invariants for chains (i.e.~a tuple $\underline{n}$ and $\underline{d}$ corresponding to the ranks and degrees of the vector bundles in the chain), one can construct projective moduli spaces of chains which are semistable with respect to a stability parameter $\alpha$ (a tuple of real numbers indexed by the vector bundles in the chain) via Geometric Invariant Theory \cite{schmitt_moduli}. The deformation theory and wall-crossing for chains were studied in \cite{ACGPS}, as well as the relationship between stability for chains and Higgs bundles. In particular, the connected components of the fixed point set of the $\GG_m$-action on $\cM$ are moduli spaces of $\alpha_H$-semistable chains for different discrete invariants, where $\alpha_H$ is a Higgs stability parameter satisfying $\alpha_{H,i} - \alpha_{H,i+1} = 2g-2$ for all $i$ (see \cite{Simpson} and \cite[Corollary 2.6]{HPL_Higgs}). Provided $n$ and $d$ are coprime, the Higgs stability parameter is generic for the discrete invariants for chains appearing as fixed loci components (i.e semistability and stability coincide and these chain moduli spaces are smooth projective varieties). Hence, the fixed locus consists of the moduli space $\cN$ of semistable vector bundles and moduli spaces of $\alpha_H$-semistable chains with various discrete invariants.

For small values of $n$ and for values of $d$ coprime to $n$, this $\GG_m$-action has been used to calculate the Poincar\'{e} polynomial of $\cM$ in rank $n=2$ by Hitchin \cite{Hitchin} and in rank $n=3$ by Gothen \cite{Gothen}; in these low rank computations, the fixed loci are related to symmetric powers of $C$, the Jacobian of $C$ and moduli spaces of pairs (consisting of a vector bundle and a section) studied by Bradlow \cite{Bradlow} and Thaddeus \cite{Thaddeus_pairs}, which depend on a stability parameter.

This scaling action is also used in the study the class of $\cM$ in the Grothendieck ring of varieties in \cite{GPHS} and the Voevodsky motive of $\cM$ in \cite{HPL_Higgs}, where wall-crossing for chains plays an important role. In rank $n = 2$ and odd degree, we obtained a formula for the integral motive of $\cM$ in \cite[Theorem 1.4]{FHPL-rank2} in terms of $\cN$.

\subsection{The fixed loci and their motives in rank 3}
In order to  compute the motive of the Higgs bundle moduli space in rank $3$ and coprime degree $d$, we will use the motivic Bia{\l}ynicki-Birula decomposition associated to the $\GG_m$-action on $\cM$ and Gothen's description of the fixed components \cite{Gothen}. Since the motive of $\cM$ is pure, we can view the motivic Bia{\l}ynicki-Birula decomposition as an isomorphism in $\CHM(\k,\QQ)$. For this calculation, we need to describe the motive of each component in the fixed locus. By specifying the ranks of the vector bundles in the chain, we split the possible fixed components into the following types:\newline

\noindent \textbf{Type (1,1,1):} $(E,\Phi)$ with $E = L_1 \oplus L_2 \oplus L_3$ a sum of three line bundles and $\Phi_i : L_i \ra L_{i+1} \otimes \omega_C$,\newline

\noindent \textbf{Type (1,2):} $(E,\Phi)$ with $E = L \oplus F$ for a line bundle $L$ with $\Phi : L \ra F \otimes \omega_C$,\newline

\noindent \textbf{Type (2,1):} $(E,\Phi)$ with $E = F \oplus L$ for a line bundle $L$ with $\Phi : F \ra L \otimes \omega_C$,\newline

\noindent \textbf{Type (3):} $(E,\Phi)$ with $\Phi = 0$ and $E$ a rank 3 semistable vector bundle.\newline

In the last case, Type (3), there is only one fixed component: the moduli space $\cN = \cN_C(3,d)$ of semistable vector bundles of rank $n=3$ and degree $d$ on $C$. For all the other types, there are different components indexed by the possible degrees of the vector bundles in the chain. We modify Gothen's description of the fixed components of each type for Higgs bundles with fixed determinant to arbitrary determinant in the following sections.

The Chow motive of the Higgs moduli space will be, via the motivic Bia{\l}ynicki-Birula decomposition, a sum of motives corresponding to each type:
\begin{equation}\label{motivic BB}
\fh(\cM_C(3,d)) = \fh(\cN_C(3,d)) \oplus M_{(1,1,1)} \oplus M_{(2,1)} \oplus M_{(1,2)}
\end{equation}
where the Chow motives $M_{(1,1,1)}$,  $M_{(2,1)}$ and $M_{(1,2)}$ will be each computed in turn below. Note that a priori, these motives depend on $d$. Each of these motives is a direct sum of Tate twists of the motives of the fixed components of that type. The Tate twist appearing with a fixed component $F$ is the codimension of the corresponding Bia{\l}ynicki-Birula stratum $F^+$ (which retracts onto $F$ via the downwards flow); since the downward flow is Lagrangian, 
\[ \codim(F^+)= \frac{1}{2} \dim(\cM(3,d)) - \dim(F) = 9(g-1) + 1 - \dim (F) .\]

\subsubsection{Fixed loci of Type (1,1,1)}

Suppose that $(E,\Phi)$ is a stable Higgs bundle fixed by the scaling action of Type (1,1,1); then $E = L_1 \oplus L_2 \oplus L_3$ and
\[ \Phi = \left( \begin{array}{ccc} 0 & 0 & 0 \\ \phi_1 & 0 & 0 \\ 0 & \phi_2 &  0 \end{array} \right) \]
where $\phi_i : L_i \ra L_{i+1} \otimes \omega_{C}$ are non-zero. Let $l_i := \deg (L_i)$; then $d = l_1 + l_2 + l_3$. The non-zero homomorphisms $\phi_i$ correspond to non-zero sections of the line bundle $M_i = L_i^{-1} \otimes L_{i+1} \otimes \omega_C$, which has degree $m_i = l_{i+1} - l_i + 2g-2 \geq 0$. Given $M_1$ and $M_2$ and one of the three line bundles (say $L_2$), we can determine the other two line bundles ($L_1=L_2 \otimes M_1^{-1} \otimes \omega_{C}$ and $L_2 = L_3 \otimes M_2^{-1} \otimes \omega_C$). Hence, the fixed points $(L_1 \oplus L_2 \oplus L_3,\Phi)$ with degrees $(l_1,l_2,l_3)$ are parametrised by
\[ \Pic^{l_1}(C) \times C^{(l_2 - l_1 + 2g-2)} \times C^{(d-2l_2 - l_1 + 2g-2)}.\]
The possible ranges for $(l_1,l_2)$ are determined by $\phi_i$ being non-zero and stability of $(E,\Phi)$: the $\Phi$-invariant subbundles of $E$ are $L_2 \oplus L_3$ and $L_3$ and stability of $(E,\Phi)$ implies
\[  \frac{l_2 + l_3}{2} < \frac{d}{3} \quad \text{and} \quad l_3 < \frac{d}{3}. \]
If we equivalently phrase these inequalities in terms of $m_i$, we get
\[ 2m_1 + m_2 < 6g-6 \quad \text{and} \quad m_1 + 2m_2 < 6g - 6 \]
and we recover the $l_i$ by noting that $3 l_2 = d +m_1 - m_2$. Hence, we need $d \equiv m_2 - m_1 $ modulo $3$; we note that this last constraint was accidentally omitted by Gothen and consequently resulted in an error in the formula for the Poincar\'{e} polynomial stated in \cite{Gothen}; this was later corrected in \cite[$\S$10]{HT_MirrorSym} and in the formula for the virtual motivic class in \cite[Appendix]{GPHS}.

\begin{cor} 
	\label{cor:FixLoci(1,1,1)}
The fixed locus components of Type (1,1,1) are given by 
\[ F_{(1,1,1)}=\bigsqcup_{\begin{smallmatrix} (m_1,m_2) \in \NN^2 \\ \max(2m_1+m_2,m_1 + 2m_2) < 6g-6 \\ d \equiv m_2 - m_1 \mod 3 \end{smallmatrix} } \Pic^{(d+ m_1 -m_2)/3}(C) \times C^{(m_1)}\times C^{(m_2)}. \]
Thus the contribution $M_{(1,1,1)}$ in \eqref{motivic BB} to the Chow motive of the Higgs bundle moduli space is
\[ M_{(1,1,1)} = \bigoplus_{\begin{smallmatrix} (m_1,m_2) \in \NN^2 \\ \max(2m_1+m_2,m_1 + 2m_2) < 6g-6 \\ d \equiv m_2 - m_1 \mod 3 \end{smallmatrix} } \fh(\Pic^{(d+ m_1 -m_2)/3}(C))  \otimes \fh(C^{(m_1)})\otimes \fh(C^{(m_2)})(8g-8 - m_1 -m_2). \]  
 \end{cor}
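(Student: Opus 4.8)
The plan is to assemble the explicit description of the fixed locus obtained in the paragraphs above with the motivic Bia{\l}ynicki--Birula decomposition \eqref{motivic BB}. The substantive point, which I would treat first, is to upgrade the set-theoretic parametrisation of the Type $(1,1,1)$ fixed points to an isomorphism of varieties: the component of the (smooth, proper) fixed locus $\cM^{\GG_m}$ consisting of $(E,\Phi)$ with $E\simeq L_1\oplus L_2\oplus L_3$, $\deg L_i=l_i$, and both $\phi_i\neq 0$, is isomorphic to $\Pic^{l_2}(C)\times C^{(m_1)}\times C^{(m_2)}$ with $m_i=l_{i+1}-l_i+2g-2$. Indeed $L_1=L_2\otimes\cO_C(-D_1)\otimes\omega_C$ and $L_3=L_2\otimes\cO_C(D_2)\otimes\omega_C^{-1}$ where $D_i=\operatorname{div}(\phi_i)$, and, since a stable Higgs bundle has only scalar automorphisms, the remaining ambiguity in $(L_2,\phi_1,\phi_2)$ is exactly the obvious $\GG_m^2$ rescaling the two $\phi_i$, so the data is rigidified by the pair of effective divisors $(D_1,D_2)$. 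This is Gothen's identification of these components \cite{Gothen} in the fixed-determinant case; to pass to arbitrary determinant one uses (as recalled above) that these components are moduli spaces of $\alpha_H$-semistable rank-$(1,1,1)$ chains of fixed degrees and that $\alpha_H$ is generic, so the universal chain produces the product directly, the fixed-determinant condition on $L_2$ being replaced by letting $L_2$ range over $\Pic^{l_2}(C)\cong\Jac(C)$. Finally, from $l_2=l_1+m_1-(2g-2)$ and $l_3=l_2+m_2-(2g-2)$ together with $l_1+l_2+l_3=d$ one gets $3l_2=d+m_1-m_2$, whence $\Pic^{l_2}(C)\cong\Pic^{(d+m_1-m_2)/3}(C)$ and, in particular, $d\equiv m_2-m_1\pmod 3$.

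Next I would pin down the index set. Non-vanishing of $\phi_i$ forces $m_i\geq 0$. For $(E,\Phi)$ of this type with $\phi_1,\phi_2\neq 0$, a short check shows the only proper non-zero $\Phi$-invariant subbundles are $L_3$ and $L_2\oplus L_3$ (a $\Phi$-invariant sub-line-bundle projecting non-trivially to $L_1$ or to $L_2$ would force $\Phi$ to vanish on it, contradicting injectivity of the line-bundle maps $\phi_i$, and a $\Phi$-invariant rank-$2$ subbundle other than $L_2\oplus L_3$ reduces, modulo $L_3$, to the same statement). Hence stability of $(E,\Phi)$ is equivalent to $\mu(L_3)<d/3$ and $\mu(L_2\oplus L_3)<d/3$, which translate via $m_i=l_{i+1}-l_i+2g-2$ into $m_1+2m_2<6g-6$ and $2m_1+m_2<6g-6$, i.e. $\max(2m_1+m_2,\,m_1+2m_2)<6g-6$. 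Conversely, for every $(m_1,m_2)$ in this range with $d\equiv m_2-m_1\pmod 3$ the construction above produces stable Higgs bundles, so each such pair genuinely contributes a component; this yields the stated formula for $F_{(1,1,1)}$.

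For the motive, recall that since $\fh(\cM)$ is pure the motivic Bia{\l}ynicki--Birula decomposition \eqref{motivic BB} is an isomorphism in $\CHM(\k,\QQ)$, and that $M_{(1,1,1)}$ is the direct sum, over the components $F$ above, of $\fh(F)$ twisted by $\codim_{\cM}F^{+}$. By the Lagrangian codimension formula recalled before the statement, $\codim_{\cM}F^{+}=\tfrac12\dim\cM-\dim F=(9(g-1)+1)-(g+m_1+m_2)=8g-8-m_1-m_2$, using $\dim\big(\Pic^{k}(C)\times C^{(m_1)}\times C^{(m_2)}\big)=g+m_1+m_2$; alternatively this codimension can be read off directly from the $\GG_m$-weights on the deformation complex of the chain, which is valid over any field. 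Combining this with the K\"unneth isomorphism $\fh(X\times Y)\simeq\fh(X)\otimes\fh(Y)$ in $\CHM(\k,\QQ)$ gives the asserted formula for $M_{(1,1,1)}$.

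I expect the only genuinely non-formal step to be the first one: the scheme-theoretic identification of the Type $(1,1,1)$ fixed components with the products $\Pic^{(d+m_1-m_2)/3}(C)\times C^{(m_1)}\times C^{(m_2)}$, together with the determination of the precise stability chamber. This rests on the moduli theory of holomorphic chains (representability, and the fact that for rank-one chains $\alpha_H$-(semi)stability is exactly the pair of slope inequalities above); in the fixed-determinant case it is Gothen's computation, and the modification to arbitrary determinant only enlarges the space parametrising the line bundle from a point to $\Jac(C)$. Everything else — the combinatorics of the index set, the codimension count, and the passage through \eqref{motivic BB} — is bookkeeping.
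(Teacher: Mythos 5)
Your proposal is correct and follows essentially the same route as the paper: Gothen's identification of the Type $(1,1,1)$ fixed components (modified to arbitrary determinant), the translation of the stability inequalities for the invariant subbundles $L_3$ and $L_2\oplus L_3$ into $\max(2m_1+m_2,m_1+2m_2)<6g-6$ together with the congruence $d\equiv m_2-m_1\pmod 3$, and the Lagrangian codimension count $8g-8-m_1-m_2$ fed into the motivic Bia{\l}ynicki--Birula decomposition \eqref{motivic BB}. You supply somewhat more detail than the paper (the scheme-theoretic rigidification via the divisors of the $\phi_i$ and the verification that $L_3$, $L_2\oplus L_3$ are the only invariant subbundles), but the argument is the same.
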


 \begin{rmk} 
A priori $F_{(1,1,1)}$ depends on $d$ and so we should really write $F(d)_{(1,1,1)}$. However, assuming that $C$ has a degree $1$ line bundle, the map $(m_1,m_2) \mapsto (m_2,m_1)$ determines an isomorphism
\[ F(d)_{(1,1,1)} \cong F(-d)_{(1,1,1)}.\]
\end{rmk}

\subsubsection{Fixed loci of Type (1,2)}\label{sec Type 12} Suppose that $(E,\Phi)$ is a stable Higgs bundle fixed by the scaling action of Type (1,2); then $E = L \oplus F$ and $\Phi$ is determined by a non-zero homomorphism $L \ra F \otimes \omega_C$. Equivalently, we can think of this non-zero homomorphism as a non-zero section $\phi$ of the vector bundle $V := L^{-1} \otimes F \otimes \omega_C$. Gothen bounded the possible values of $l:= \deg(L)$ as follows
\begin{equation}\label{ineq l} 
\frac{d}{3} < l < \frac{d}{3} + g-1
\end{equation}
using stability of $(E,\Phi)$ and the fact that the homomorphism $L \ra F \otimes \omega_C$ is non-zero. Hence $(E,\Phi)$ determines a line bundle $L$ of degree $l$ and a pair $(V,\phi)$ consisting of a rank $2$ vector bundle of degree $e:=d - 3l +4g-4$ and a non-zero section. Gothen \cite{Gothen} showed that Higgs stability of $(E,\Phi)$ corresponds to pair stability of $(V,\phi)$ for a particular value of the stability parameter $\sigma$. 

\begin{prop}[{\cite[Proposition 2.5]{Gothen}}]
The component of the $\GG_m$-fixed locus consisting of Higgs bundles $(E = L \oplus F, \Phi: L \ra F \otimes \omega_C)$ of Type (1,2) with $\deg(L) = l$ satisfying \eqref{ineq l} is isomorphic to the product $\Pic^l(C) \times \cP^{\sigma_{d,l}-ss}(2,e_{d,l})$ where $e_{d,l}:= d-3l +4g-4$ and $\sigma_{d,l} = \frac{l}{2} - \frac{d}{6}$.
\end{prop}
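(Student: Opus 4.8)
The plan is to realize the Type (1,2) fixed component $F_{(1,2),l}$ --- the moduli space of $\GG_m$-fixed Higgs bundles $(E=L\oplus F,\ \Phi\colon L\to F\otimes\omega_C)$ with $\deg L=l$ satisfying \eqref{ineq l} --- as a $\Pic^l(C)$-twisted family of rank $2$ pairs, via an explicit twisting construction, and then to match Higgs stability with $\sigma_{d,l}$-stability of pairs on the nose. The coprimality hypothesis will be used repeatedly: both to pass between the strict slope inequalities defining Higgs stability and the non-strict ones defining $\sigma$-semistability, and to guarantee that $\sigma_{d,l}$ is a generic pair-stability parameter.

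First I would make the twisting construction precise. A $\GG_m$-fixed Higgs bundle of Type (1,2) carries a canonical weight decomposition $E=E_0\oplus E_1=L\oplus F$ induced by the cocharacter $\psi\colon\GG_m\to\Aut(E)$ with $\psi_t\circ\Phi\circ\psi_t^{-1}=t\Phi$; this cocharacter is unique up to the scalar automorphisms of $(E,\Phi)$, so the grading is canonical and is preserved by every isomorphism of fixed Higgs bundles. With respect to this grading $\Phi$ is a single nonzero off-diagonal block $\tilde\phi\colon L\to F\otimes\omega_C$, hence injective as a sheaf map. Setting $V:=L^{-1}\otimes F\otimes\omega_C$, a rank $2$ bundle of degree $\deg F-2l+2(2g-2)=d-3l+4g-4=e_{d,l}$, turns $\tilde\phi$ into a nonzero section $\phi\in H^0(C,V)$; conversely one recovers $F:=L\otimes V\otimes\omega_C^{-1}$, $E:=L\oplus F$ and the off-diagonal Higgs field from $(L,(V,\phi))$. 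Both operations are functorial in families, and since the Higgs moduli space (as $\gcd(3,d)=1$) and the pair moduli space for generic $\sigma$ carry universal families, applying them to the universal objects yields mutually inverse morphisms between $\Pic^l(C)\times\cP^{\sigma_{d,l}-ss}(2,e_{d,l})$ and $F_{(1,2),l}$ --- \emph{provided} each lands in the stable locus, which is exactly the content of the stability comparison.

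For that comparison I would enumerate the proper $\Phi$-invariant subbundles of $E=L\oplus F$. Since $\Phi$ annihilates $F$, these are: (a) all line subbundles $M\subset F$; and (b) the rank $2$ subbundles, namely $F$ itself together with those $W$ surjecting onto $L$ with $W\cap F$ a line subbundle $M\subset F$ through which $\tilde\phi$ factors (no $\Phi$-invariant line subbundle meets $L$ nontrivially, because $\tilde\phi$ is injective). The subbundle $F$ is never destabilizing precisely because $\mu(F)=(d-l)/2<d/3\iff l>d/3$, the left inequality in \eqref{ineq l}. For the remaining subbundles I would pass to line subbundles of $V$ via the bijection $M'\subset V\ \leftrightarrow\ M:=L\otimes M'\otimes\omega_C^{-1}\subset F$, under which $\phi\in H^0(M')$ iff $\tilde\phi$ factors through $M\otimes\omega_C$ (iff $M'$ underlies a rank $2$ invariant $W$, of degree $\deg M+l$). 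Using $\deg M=\deg M'+l-(2g-2)$ together with the identities $e_{d,l}/2-\sigma_{d,l}=2d/3-2l+2g-2$ and $e_{d,l}/2+\sigma_{d,l}=d/3-l+2g-2$, the Higgs conditions $\mu(W)<d/3$ (for $M'$ containing the section; note this is strictly stronger than $\deg M<d/3$, exactly because $l>d/3$) and $\deg M<d/3$ (for $M'$ not containing the section) become precisely $\deg M'\leq e_{d,l}/2-\sigma_{d,l}$ and $\deg M'\leq e_{d,l}/2+\sigma_{d,l}$ respectively. Here $\gcd(3,d)=1$ forces $2d/3,\ d/3\notin\ZZ$, so the strict Higgs inequalities on integer degrees coincide with the non-strict $\sigma$-semistability inequalities, and moreover $\sigma_{d,l}$ is not a wall, so $\sigma_{d,l}$-semistability equals $\sigma_{d,l}$-stability for pairs. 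Hence $(E,\Phi)$ is semistable --- equivalently, since $\gcd(3,d)=1$, stable --- if and only if $(V,\phi)$ is $\sigma_{d,l}$-stable, which completes the previous step and yields the asserted isomorphism $F_{(1,2),l}\cong\Pic^l(C)\times\cP^{\sigma_{d,l}-ss}(2,e_{d,l})$.

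The main obstacle I anticipate is the bookkeeping in the stability comparison: correctly sorting the $\Phi$-invariant subbundles into rank $1$ and rank $2$ types and matching each to the right class of line subbundle of $V$, checking that the rank $2$ constraint subsumes the rank $1$ constraint whenever $\tilde\phi$ factors through $M$, and tracking the arithmetic so that the two families of inequalities collapse to the single parameter $\sigma_{d,l}$. The remaining delicate point, used silently in the first step, is the canonicity of the weight grading on a fixed point --- this is what upgrades the bijection on points to an isomorphism of moduli spaces, and it rests on the essential uniqueness of the cocharacter $\psi$ realizing the fixed-point condition.
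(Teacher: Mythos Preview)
Your proof is correct and carries out in full the argument that the paper does not give: the paper simply cites this result from Gothen and provides no proof of its own, so there is nothing to compare beyond noting that your approach is exactly the one Gothen uses---associate to $(L\oplus F,\Phi)$ the pair $(V,\phi)=(L^{-1}\otimes F\otimes\omega_C,\tilde\phi)$ and match the $\Phi$-invariant subbundle conditions with the $\sigma_{d,l}$-inequalities for line subbundles of $V$. One minor refinement: when you enumerate rank $2$ $\Phi$-invariant subbundles $W$ as those ``surjecting onto $L$'', in principle $W$ could map onto a proper sub-line-bundle $L'\subsetneq L$; however, enlarging $W$ to the preimage of $L$ only increases the slope, so it suffices to test those $W$ with $W\to L$ surjective, and your reduction is justified.
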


Consequently, we obtain the following description of the fixed locus components of Type (1,2) and the motivic contribution $M_{(1,2)}$ in \eqref{motivic BB}.

\begin{cor} 
	\label{cor:FixLoci(1,2)}
The fixed locus components in $\cM^{\GG_m}$ of Type (1,2) are given by
\[ 
F_{(1,2)}=\bigsqcup_{\frac{d}{3} < l < \frac{d}{3} + g-1} \Pic^l(C)\times \cP^{\sigma_{d,l}-ss}(2,e_{d,l}) \]
 where $e_{d,l}:= d-3l +4g-4$ and $\sigma_{d,l} = \frac{l}{2} - \frac{d}{6}$. Hence, the contribution $M_{(1,2)}$ in \eqref{motivic BB} to the Chow motive of the Higgs bundle moduli space is 
 \[ M_{(1,2)} = \bigoplus_{\frac{d}{3} < l < \frac{d}{3} + g-1} \fh(\Pic^l(C))\otimes \fh(\cP^{\sigma_{d,l}-ss}(2,e_{d,l}))(2g - 2 -  d+ 3l). \]
 \end{cor}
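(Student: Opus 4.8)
The plan is to obtain Corollary~\ref{cor:FixLoci(1,2)} by feeding Gothen's description of the Type~(1,2) fixed components into the motivic Bia{\l}ynicki-Birula decomposition \eqref{motivic BB}, and then carrying out the elementary dimension count that produces the Tate twist. First I would combine the bounds \eqref{ineq l} with \cite[Proposition 2.5]{Gothen}: the locus of $\GG_m$-fixed Higgs bundles of Type~(1,2) is the disjoint union over $d/3 < l < d/3 + g-1$ of the components $\Pic^l(C) \times \cP^{\sigma_{d,l}-ss}(2,e_{d,l})$ with $e_{d,l} = d - 3l + 4g-4$ and $\sigma_{d,l} = (3l-d)/6$, which is the first displayed formula. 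Here I would record the small point that each $\sigma_{d,l}$ is a generic stability parameter: the hypotheses $d/3 < l < d/3+g-1$ give $0 < \sigma_{d,l} < e_{d,l}/2$, while every wall for degree-$e_{d,l}$ pairs has the form $(e_{d,l}-2i)/2 = 3(e_{d,l}-2i)/6$; since $3 \nmid d$ the numerator $3l-d$ is not a multiple of $3$, so $\sigma_{d,l}$ is never a wall. Consequently each Type~(1,2) component is smooth projective and its pair factor has the expected dimension $\dim \cP^{\sigma_{d,l}-ss}(2,e_{d,l}) = e_{d,l} + 2g-2$.

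Next I would apply the motivic Bia{\l}ynicki-Birula decomposition. Since $\cM = \cM_C(3,d)$ is smooth and the scaling action is semi-projective, this decomposition holds as an isomorphism in $\CHM(\k,\QQ)$ and already isolates $M_{(1,2)}$ as the direct sum, over the Type~(1,2) fixed components $F$, of $\fh(F)(\codim F^+)$, where $F^+$ is the attracting stratum retracting onto $F$. Using the K\"unneth isomorphism $\fh(\Pic^l(C) \times \cP^{\sigma_{d,l}-ss}(2,e_{d,l})) \simeq \fh(\Pic^l(C)) \otimes \fh(\cP^{\sigma_{d,l}-ss}(2,e_{d,l}))$, this puts $M_{(1,2)}$ in the stated form up to determining the twists $\codim F^+$.

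It then remains to compute, for $F_l := \Pic^l(C) \times \cP^{\sigma_{d,l}-ss}(2,e_{d,l})$, the codimension of $F_l^+$. The Lagrangian property of the downward flow recalled above gives $\codim F_l^+ = \tfrac12 \dim \cM - \dim F_l = 9(g-1)+1 - \dim F_l$, and $\dim F_l = \dim \Pic^l(C) + \dim \cP^{\sigma_{d,l}-ss}(2,e_{d,l}) = g + (e_{d,l} + 2g-2) = g + (d-3l+4g-4) + 2g-2 = d - 3l + 7g - 6$. Hence
\[ \codim F_l^+ = (9g - 8) - (d - 3l + 7g - 6) = 2g - 2 - d + 3l, \]
which is exactly the twist appearing in the statement.

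This argument is essentially bookkeeping given the earlier results; the only point requiring a little care — and the step I would double-check most carefully — is the verification that the parameters $\sigma_{d,l}$ avoid all walls, since this is what guarantees both that the relevant pair moduli spaces are smooth (so that their Chow motives are defined and the Bia{\l}ynicki-Birula decomposition is available as an isomorphism of Chow motives) and that the dimension formula $e+2g-2$ applies. As noted, this follows cleanly from $\gcd(3,d)=1$.
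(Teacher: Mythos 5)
Your proposal is correct and follows the same route the paper takes: feed Gothen's \cite[Proposition 2.5]{Gothen} together with the bounds \eqref{ineq l} into the motivic Bia{\l}ynicki-Birula decomposition \eqref{motivic BB} and compute the Tate twist as $\codim F^+ = 9(g-1)+1-\dim F = 2g-2-d+3l$ using the Lagrangian property of the downward flow and $\dim\cP^{\sigma-ss}(2,e)=e+2g-2$. Your explicit verification that $\gcd(3,d)=1$ forces $\sigma_{d,l}$ to avoid every wall is a worthwhile addition that the paper only records implicitly via the genericity of the Higgs stability parameter.
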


If we make a change of variables $k = l - \lfloor \frac{d}{3} \rfloor -1$, then $3l - d = 3k + 3 -x$ where $x \in \{1,2\}$ satisfies $x \equiv d$ mod $3$. Hence, we have
 \begin{equation}\label{eq motive (1,2)}
  M_{(1,2)} = \bigoplus_{k=0}^{g-2} \fh(\Pic^{k+1 + \frac{d-x}{3}}(C))\otimes \fh(\cP^{(\frac{k+1}{2} - \frac{x}{6})-ss}(2,4g -3k -7+x))(2g +3k +1 -x).
 \end{equation}
For $0 \leq k \leq g-2$, pairs of degree $e(k,x):=4g -3k -7+x$ have different moduli spaces $\cP^j_{e(k,x)}$ corresponding to the different chambers $C_j$ in which the stability parameter lies (see Theorem \ref{thm pairs VGIT}). The stability parameter $\frac{k+1}{2} - \frac{x}{6}$ lies in the chamber $C_{i_k}$ where $i_{k} = 2g -2k - 5 +x$ and so 
 \[ \cP^{(\frac{k+1}{2} - \frac{x}{6})-ss}(2,4g -3k -7+x)) \cong \cP^{2g -2k - 5 +x}_{4g -3k -7+x} \]
 and the Chow motive of this pairs moduli space is calculated by Corollary \ref{cor Chow motives pairs} above.

\subsubsection{Fixed loci of Type (2,1)}\label{sec Type 21} The dual of a Higgs bundle $(E,\Phi)$ is given by $(E^\vee,\Phi^\vee \otimes \mathrm{Id}_{\omega_C})$ and preserves stability. Furthermore, the dual of a stable Higgs bundle of Type (2,1) is a stable Higgs bundle of Type (1,2). More precisely, if we write the fixed loci as depending on $d$ as $F(d)_{(1,2)}$ and $F(d)_{(2,1)}$, then dualising gives an isomorphism $F(d)_{(2,1)} \cong F(-d)_{(1,2)}$. Therefore, the fixed components of Type (2,1) with degree $d$ are indexed by 
\begin{equation}\label{ineq j} 
\frac{d}{3} + 1-g < j < \frac{d}{3}
\end{equation}

\begin{prop} [{\cite[Proposition 2.9]{Gothen}}]
The component of the $\GG_m$-fixed locus consisting Higgs bundles $(E =  F \oplus L, \Phi: F \ra L \otimes \omega_C)$ of Type (2,1) with $\deg(L) = j$ satisfying \eqref{ineq j} is isomorphic to the product $\Pic^j(C) \times \cP^{\sigma_{d,j}-ss}(2,f_{d,j})$ where $f_{d,j}:=3j-d +4g-4$ and $\sigma_{d,j} :=  \frac{d}{6} - \frac{j}{2}$.
\end{prop}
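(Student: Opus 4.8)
The plan is to deduce this from the companion statement for Type $(1,2)$ fixed loci (the preceding Proposition) via the dualization isomorphism already recorded in the excerpt. Recall that $(E,\Phi)\mapsto(E^\vee,\Phi^\vee\otimes\mathrm{Id}_{\omega_C})$ is an involution preserving semistability, which is $\GG_m$-equivariant for the scaling actions and which exchanges Type $(2,1)$ with Type $(1,2)$: a bundle $(E=F\oplus L,\ \Phi\colon F\to L\otimes\omega_C)$ of Type $(2,1)$ with $\deg L=j$ is carried to the Type $(1,2)$ bundle $(E^\vee=L^\vee\oplus F^\vee,\ \Phi^\vee\otimes\mathrm{Id}\colon L^\vee\to F^\vee\otimes\omega_C)$, whose line-bundle summand $L^\vee$ has degree $-j$. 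First I would check that this induces an isomorphism of $\k$-schemes from the Type $(2,1)$ fixed component of $\cM_C(3,d)$ indexed by $j$ onto the Type $(1,2)$ fixed component of $\cM_C(3,-d)$ indexed by $l:=-j$; since dualization is given by a natural transformation of the moduli functors and is its own inverse, this is formal once one knows (as stated in the excerpt) that it preserves the $\GG_m$-fixed locus and respects the weight-type decomposition.

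Next I would match the numerical data under $d\mapsto -d$, $j\mapsto l=-j$. The constraint \eqref{ineq j}, that is $\tfrac{d}{3}+1-g<j<\tfrac{d}{3}$, is equivalent after negation to $\tfrac{-d}{3}<-j<\tfrac{-d}{3}+g-1$, which is precisely the range \eqref{ineq l} defining the Type $(1,2)$ components of $\cM_C(3,-d)$; so the Type $(1,2)$ Proposition applies to $l=-j$ and gives that the corresponding component is $\Pic^{-j}(C)\times\cP^{\sigma_{-d,-j}-ss}(2,e_{-d,-j})$. One computes $e_{-d,-j}=(-d)-3(-j)+4g-4=3j-d+4g-4=f_{d,j}$ and $\sigma_{-d,-j}=\tfrac{-j}{2}-\tfrac{-d}{6}=\tfrac{d}{6}-\tfrac{j}{2}=\sigma_{d,j}$. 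Composing with the isomorphism of the previous paragraph and identifying $\Pic^{-j}(C)\cong\Pic^j(C)$ via $M\mapsto M^\vee$ on the Picard factor (or by a fixed twist built from a degree $1$ line bundle) yields the asserted product decomposition.

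Because this reduces to an already-available result, I do not expect a genuine obstacle: the only delicate points are the sign bookkeeping above and the routine verification that the dualization construction defines an isomorphism of moduli schemes rather than a mere bijection on closed points, which rests on representability and the smoothness of $\cM$ in the coprime case.

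As an alternative self-contained route mimicking Gothen, one identifies a Type $(2,1)$ fixed bundle $(F\oplus L,\ \Phi\colon F\to L\otimes\omega_C)$ with the datum of $L\in\Pic^j(C)$ together with the pair $(V,\phi)$, where $V:=F^\vee\otimes L\otimes\omega_C$ is the rank $2$ bundle of homomorphisms $F\to L\otimes\omega_C$ (of degree $f_{d,j}$) and $\phi:=\Phi\in H^0(C,V)$ is nonzero, recovering $F=V^\vee\otimes L\otimes\omega_C$. One then checks that the $\Phi$-invariant subbundles of $F\oplus L$ are $L$, the saturated kernel $\overline{\ker\Phi}\subset F$, the rank $2$ subbundles $M'\oplus L$ for line subbundles $M'\subset F$, and subbundles dominated by these, while the line subbundles of $V$ are exactly $N_M:=(F/M)^\vee\otimes L\otimes\omega_C$ for line subbundles $M\subset F$, with $\phi\in H^0(N_M)$ precisely when $M\subseteq\overline{\ker\Phi}$. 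A slope computation then shows that Higgs (semi)stability of $(F\oplus L,\Phi)$ is equivalent to $\sigma_{d,j}$-(semi)stability of $(V,\phi)$, the two boundary inequalities $\deg L<d/3$ and $j>\tfrac{d}{3}+1-g$ of \eqref{ineq j} corresponding respectively to $\sigma_{d,j}>0$ and to the nonemptiness bound $\sigma_{d,j}\le f_{d,j}/2$ from \cite{Thaddeus_pairs}. The only mildly delicate step in this direct approach is pinning down the exact value of $\sigma_{d,j}$ from the slope inequalities — which is precisely what the duality shortcut avoids.
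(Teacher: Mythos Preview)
Your proposal is correct and mirrors the paper's own treatment: the paper does not give an independent proof of this proposition but simply records, just before the statement, that dualization $(E,\Phi)\mapsto(E^\vee,\Phi^\vee\otimes\mathrm{Id}_{\omega_C})$ identifies $F(d)_{(2,1)}\cong F(-d)_{(1,2)}$, and then cites Gothen for the explicit product description. Your numerical checks $e_{-d,-j}=f_{d,j}$ and $\sigma_{-d,-j}=\sigma_{d,j}$ are exactly what is needed to extract the statement from the Type $(1,2)$ proposition, and your alternative direct argument is a faithful sketch of Gothen's original approach.
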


\begin{cor} 
	\label{cor:FixLoci(2,1)}
The fixed locus components of Type (2,1) are 
\[ 
F_{(2,1)}=\bigsqcup_{\frac{d}{3} + 1-g < j < \frac{d}{3} } \Pic^j(C)\times \cP^{\sigma_{d,j}-ss}(2,f_{d,j}) \]
 where $f_{d,j}:= 3j -d +4g-4$ and $\sigma_{d,j} :=  \frac{d}{6} - \frac{j}{2}$. Hence, the contribution $M_{(2,1)}$ in \eqref{motivic BB} to the Chow motive of the Higgs bundle moduli space is 
 \[ M_{(2,1)} = \bigoplus_{\frac{d}{3} + 1-g < j < \frac{d}{3} } \fh(\Pic^j(C))\otimes \fh(\cP^{\sigma_{d,j}-ss}(2,f_{d,j}))(2g - 2 - 3j + d). \]
 \end{cor}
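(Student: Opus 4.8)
The plan is to deduce this corollary from \cite[Proposition 2.9]{Gothen} (stated just above) together with the duality between Type $(2,1)$ and Type $(1,2)$ fixed loci and the general shape of the motivic Bia{\l}ynicki-Birula decomposition \eqref{motivic BB}. There are three things to nail down: the admissible range of the index $j$, the identification of each individual fixed component, and the Tate twist attached to it.

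First I would pin down the index range \eqref{ineq j}. Dualising a Higgs bundle, $(E,\Phi)\mapsto(E^\vee,\Phi^\vee\otimes\id_{\omega_C})$, preserves stability, and since dualising also reverses the sign of the $\GG_m$-weights it interchanges Type $(2,1)$ and Type $(1,2)$: a fixed point $F\oplus L$ of Type $(2,1)$ with $\deg L=j$ in degree $d$ goes to a fixed point $L^\vee\oplus F^\vee$ of Type $(1,2)$ with $\deg(L^\vee)=-j$ in degree $-d$. This gives an isomorphism $F(d)_{(2,1)}\cong F(-d)_{(1,2)}$, and feeding the bound \eqref{ineq l} for Type $(1,2)$ through it (replacing $d$ by $-d$ and $l$ by $-j$) produces exactly \eqref{ineq j}. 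Then \cite[Proposition 2.9]{Gothen} identifies the component with $\deg L=j$ with $\Pic^j(C)\times\cP^{\sigma_{d,j}-ss}(2,f_{d,j})$ where $f_{d,j}=3j-d+4g-4$ and $\sigma_{d,j}=\frac{d}{6}-\frac{j}{2}$, and taking the disjoint union over the admissible $j$ gives the description of $F_{(2,1)}$. One should also record here that $\sigma_{d,j}$ is a generic stability parameter (so that semistability and stability for pairs coincide): this holds because the coprimality of $3$ and $d$ forces the Higgs stability parameter to be generic for all discrete invariants of chains arising as fixed components, so in particular each $\cP^{\sigma_{d,j}-ss}(2,f_{d,j})$ is smooth projective of dimension $f_{d,j}+2g-2$.

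Finally I would read off $M_{(2,1)}$. By \eqref{motivic BB}, and since the motive of $\cM$ is pure, $M_{(2,1)}=\bigoplus_F\fh(F)(\codim F^+)$ over the Type $(2,1)$ components $F$, where $F^+$ is the ascending Bia{\l}ynicki-Birula stratum retracting onto $F$. Because the downward flow of the scaling action is Lagrangian, $\codim(F^+)=\frac{1}{2}\dim\cM_C(3,d)-\dim F=9(g-1)+1-\dim F$. For $F=\Pic^j(C)\times\cP^{\sigma_{d,j}-ss}(2,f_{d,j})$ we have $\dim F=g+(f_{d,j}+2g-2)=7g-6+3j-d$, hence $\codim(F^+)=2g-2-3j+d$, which is precisely the twist in the statement; tensoring through the K\"unneth decomposition $\fh(F)\simeq\fh(\Pic^j(C))\otimes\fh(\cP^{\sigma_{d,j}-ss}(2,f_{d,j}))$ finishes the argument. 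The whole computation runs in exact parallel with Corollary \ref{cor:FixLoci(1,2)}; the only step needing any care---rather than a genuine obstacle---is the verification that $\sigma_{d,j}$ avoids the wall set of Theorem \ref{thm pairs VGIT}, i.e. that the pair moduli spaces occurring here are the smooth ones.
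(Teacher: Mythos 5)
Your proof is correct and follows essentially the same route as the paper: the duality $F(d)_{(2,1)}\cong F(-d)_{(1,2)}$ to transport the index range \eqref{ineq l} to \eqref{ineq j}, Gothen's Proposition 2.9 for the identification of each component, and the Lagrangian-flow formula $\codim(F^+)=9(g-1)+1-\dim F$ for the Tate twist, whose evaluation $2g-2-3j+d$ you carry out correctly. The genericity of $\sigma_{d,j}$ that you flag is indeed covered by the paper's general remark that coprimality of $n$ and $d$ makes the Higgs stability parameter generic for all chain types occurring in the fixed locus.
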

 
If we make a change of variables $k =  \lfloor \frac{d}{3} \rfloor -j$, then $d - 3j = 3k + x$ where $x \in \{1,2\}$ satisfies $x \equiv d$ mod $3$. Then we have
\begin{equation}\label{eq motive (2,1)}
 M_{(2,1)} = \bigoplus_{k=0}^{g-2} \fh(\Pic^{\frac{d-x}{3} - k}(C))\otimes\fh(\cP^{(\frac{k}{2}+\frac{x}{6})-ss}(2,4g-3k -4-x))(2g +3k -2 +x).
\end{equation}
For $0 \leq k \leq g-2$ and degree $f(k,x):=4g-3k-4-x$, the pairs stability parameter $\frac{k}{2}+\frac{x}{6}$ lies in the chamber $C_{i(k)}$ where $i(k) = 2g -2k - 2 -x$ and so 
\[ \cP^{(\frac{k+1}{2} - \frac{x}{6})-ss}(2,4g -3k -7+x)) \cong \cP^{2g -2k - 2 -x}_{4g-4-3k-x}. \]
 

\subsection{The proof of the formula in rank 3}
In this section, by putting the results in the previous sections together, we compute the  Chow motive of $\cM_C(3, d)$, the moduli spaces of rank 3 stable Higgs bundles of degree $d$, which is coprime to 3.


\begin{proof}[Proof of Theorem \ref{main thm H}]
	Analogously to Remark \ref{rmk:IndependentOfDegree}, by taking dual Higgs bundles and tensoring with line bundles, we see that the isomorphism class of the moduli space $\cM(3, d)$ is independent of $d$ as long as $(3, d)=1$ and $\Pic^1(C)(\k)\neq \emptyset$. Therefore, without loss of generality, we can assume that $d=1$, and identify $\Pic^j(C)$ with $\Jac(C)$.
	
	We apply the motivic Bia{\l}ynicki-Birula decomposition in the form of  \cite[Theorem A.4 (i)]{HPL_Higgs}: the statement  follows from Equation \eqref{motivic BB} and the combination of Corollaries \ref{cor:FixLoci(1,1,1)}, \ref{cor:FixLoci(1,2)} and \ref{cor:FixLoci(2,1)} (see also Equations \eqref{eq motive (1,2)} and \eqref{eq motive (2,1)} and note that we now take $d = 1$ so $x = 1$). 
\end{proof}



\subsection{Motives of Higgs moduli spaces with fixed determinant}

The rational Chow motive of $\cM(n,d)$ for any rank $n$ and coprime degree $d$ lies in in the thick tensor subcategory generated by the motive of $C$ \cite{HPL_Higgs}. As observed in the rank 2 case in \cite[Proposition 6.3]{FHPL-rank2}, for a line bundle $\cL$ of odd degree $d$ on a general complex curve $C$, the rational Chow motive of $\cM_{\cL}(2, d)$ is \textit{not} in the thick tensor subcategory generated by the motive of $C$. We have a similar phenomenon in any rank $n$.

\begin{prop}
	\label{prop:MotiveHiggsFixDetNotGeneratedByC}
If $C$ is a general complex curve, then for a line bundle $\cL$ of degree $d$ coprime to $n$, the rational Chow motive of $\cM_{\cL}(n, d)$ is \textit{not} in the thick tensor subcategory generated by the motive of $C$.
\end{prop}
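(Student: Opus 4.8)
The plan is to transfer the rank $2$ argument from \cite[Proposition 6.3]{FHPL-rank2} to arbitrary rank by exhibiting, inside $\cM_{\cL}(n,d)$, a subvariety whose motive forces the appearance of a building block that does not occur in the tensor subcategory generated by $\fh(C)$. Concretely, the Hitchin map $h\colon \cM_{\cL}(n,d)\to \bigoplus_{i=2}^{n} H^{0}(C,\omega_{C}^{\otimes i})$ is proper with Lagrangian generic fibre an abelian variety: over a generic point $b$ of the Hitchin base, the spectral curve $C_{b}$ is a smooth projective curve (of genus $g_{b}$ much larger than $g$) and the fibre $h^{-1}(b)$ is a torsor under $\Prym(C_{b}/C)$, the Prym variety of the spectral cover. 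The first step is therefore to recall this spectral correspondence and to fix a generic $b$ for which $C_{b}$ is smooth and the fibre is an abelian variety isogenous to $\Prym(C_{b}/C)$, which is a nontrivial abelian variety of dimension $(n^{2}-1)(g-1)$.

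The second step is to argue that if $\fh(\cM_{\cL}(n,d))$ lay in the thick tensor subcategory $\langle \fh(C)\rangle^{\otimes}$, then so would $\fh(h^{-1}(b))$, and hence so would $\fh(\Prym(C_{b}/C))$, hence (since the category is closed under direct summands) the primitive piece $\fh^{1}(\Prym(C_{b}/C))$. For the passage from $\cM_{\cL}$ to the fibre one can either use that $h^{-1}(b)$ is, for suitable $b$, realized as a smooth projective subvariety whose structure sheaf splits off motivically, or — cleaner — invoke that $\langle\fh(C)\rangle^{\otimes}$ is a $\otimes$-subcategory closed under summands and that $\fh^{1}$ of an abelian variety appearing as a fibre of a map from a variety with motive in $\langle\fh(C)\rangle^{\otimes}$ must itself be a summand of a tensor expression in $\fh^{1}(C)$; this is exactly the mechanism used in rank $2$ via the Prym of the spectral double cover. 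The key point is that $\fh^{1}$ is an additive, $\otimes$-multiplicative-on-$H^{1}$ functor: $\fh^{1}$ of anything in $\langle\fh(C)\rangle^{\otimes}$ is (isogenous to) a direct summand of a sum of copies of $\fh^{1}(C)$, equivalently the associated abelian variety is isogenous to a power of $\Jac(C)$.

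The third and decisive step is the \emph{generality} input: for a very general complex curve $C$, the Jacobian $\Jac(C)$ is a simple abelian variety with $\End(\Jac(C))=\ZZ$, whereas the Prym variety $\Prym(C_{b}/C)$ of a generic smooth spectral $n$-cover is \emph{not} isogenous to any power of $\Jac(C)$ — indeed it is itself simple (or at any rate has a simple isogeny factor not isogenous to $\Jac(C)$), because its dimension $(n^{2}-1)(g-1)$ is not a multiple of $g$ for general $g$ and, more robustly, because a countability/monodromy argument shows that for $C$ outside a countable union of proper subvarieties of $\cM_{g}$ no such isogeny can exist (the relevant Hodge loci are proper and countable). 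This is the precise analogue of the rank $2$ statement that $\Prym$ of the spectral double cover is not isogenous to a power of $\Jac(C)$ for general $C$. One then concludes: $\fh^{1}(\Prym(C_{b}/C))\notin\langle\fh^{1}(C)\rangle^{\otimes}$, contradicting the supposition, so $\fh(\cM_{\cL}(n,d))\notin\langle\fh(C)\rangle^{\otimes}$.

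The main obstacle I expect is the second step — rigorously extracting $\fh^{1}(\Prym(C_b/C))$ as a motivic summand of $\fh(\cM_{\cL}(n,d))$ from the mere membership $\fh(\cM_{\cL})\in\langle\fh(C)\rangle^{\otimes}$. A fibre of the Hitchin map is not a priori a smooth projective variety with a motivic splitting off $\cM_{\cL}$, so one cannot just restrict motives naively; the cleanest fix is to pick $b$ generic, use that $h$ is smooth near $h^{-1}(b)$ and that $h^{-1}(b)$ is a smooth projective abelian torsor, and deduce from functoriality (Gysin/restriction to the fibre, or a hyperplane-section/Lefschetz argument on the base) that $\fh(h^{-1}(b))$, and in particular $\fh^1$ of its Albanese $\Prym(C_b/C)$, is dominated by $\fh(\cM_{\cL})$ in a way that stays inside $\langle\fh(C)\rangle^{\otimes}$; alternatively one argues on the level of $\ell$-adic or Hodge realizations, where the statement "$H^{1}$ of the Prym is not a sub-Hodge-structure of any tensor power of $H^{1}(C)$ for very general $C$" is classical, and then uses that membership in $\langle\fh(C)\rangle^{\otimes}$ is detected on realizations up to the usual conservativity caveats — but since the assertion is a non-membership, a realization argument suffices and is the safest route. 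Everything else (spectral correspondence, dimension count, genericity of $\End(\Jac C)=\ZZ$) is standard.
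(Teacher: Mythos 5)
Your template --- exhibit an abelian variety ``inside'' $\cM_{\cL}(n,d)$ whose $H^{1}$ cannot be built from $H^{1}(C)$ when $C$ is general --- is the right one, and your third step (the genericity/Mumford--Tate input) is sound. But the second step, which you yourself flag as the main obstacle, is not merely delicate: it is false, and none of the proposed repairs save it. A fibre of the Hitchin map is not a motivic direct summand of $\fh(\cM_{\cL}(n,d))$, and its cohomology is not a subquotient of $H^{*}(\cM_{\cL}(n,d))$: the restriction map $H^{*}(\cM_{\cL})\to H^{*}(h^{-1}(b))$ is far from surjective, and by the decomposition theorem only the monodromy invariants of $R^{1}h_{*}\QQ$ over the generic point of the Hitchin base contribute to the total space; for the Hitchin system this monodromy is large, so essentially nothing of $H^{1}(\operatorname{Prym}(C_{b}/C))$ survives. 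A decisive sanity check: your argument applied verbatim to the full moduli space $\cM_{C}(n,d)$ --- whose generic Hitchin fibre is a torsor under $\Jac(C_{b})$ for the spectral curve $C_{b}$, an abelian variety that for general $C$ and $b$ is likewise not isogenous to a power of $\Jac(C)$ --- would ``prove'' that $\fh(\cM_{C}(n,d))\notin\langle\fh(C)\rangle^{\otimes}$. But that motive \emph{is} in $\langle\fh(C)\rangle^{\otimes}$ by \cite[Corollary 6.9]{HPL_Higgs}, as recalled in the introduction. Hence no argument of the shape ``a fibre of a proper map from a variety with motive in $\langle\fh(C)\rangle^{\otimes}$ again has motive in $\langle\fh(C)\rangle^{\otimes}$'' can be valid, whether run motivically or on Hodge/$\ell$-adic realizations.

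The paper sidesteps this by using a locus that genuinely splits off the motive: the fixed locus of the scaling $\GG_{m}$-action. The motivic Bia{\l}ynicki--Birula decomposition exhibits each fixed component as a direct summand (up to Tate twist) of $\fh(\cM_{\cL})$. With fixed determinant, the Type $(1,\dots,1)$ components are \'etale covers $\widetilde{C^{(m_{1})}\times\cdots\times C^{(m_{n-1})}}$ obtained by pulling back the multiplication-by-$n$ isogeny on $\Jac(C)$; in particular the degree $n^{2g}$ \'etale cover $\tilde{C}\to C$ contributes the summand $\fh(\tilde{C})$. Its Jacobian is isogenous to $\prod_{t\in H^{1}(C,\ZZ/n\ZZ)}A_{t}$, and for a class $t$ corresponding to a connected \'etale double cover $C'\to C$ the factor $\operatorname{Prym}(C'/C)$ appears; the Mumford--Tate argument of \cite[Proposition 6.3]{FHPL-rank2} then shows $H^{1}(\operatorname{Prym}(C'/C),\QQ)\notin\langle H^{1}(C,\QQ)\rangle^{\otimes}$ for general $C$. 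Note that the Prym used is that of an \emph{\'etale double cover} of $C$, not of the spectral cover; if you want to salvage your write-up, you should replace the Hitchin fibre by this fixed-locus component.
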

\begin{proof}
Consider the fixed loci of the scaling action on $\cM_{\cL}=\cM_{\cL}(n, d)$. Similarly to Gothen's description in rank 3, we have fixed components of Type $(1,\dots ,1)$ consisting of Higgs bundles $(E,\Phi)$ where $E = \oplus_{i=1}^n L_i$ is a sum of line bundles and the Higgs field is given by non-zero homomorphisms $\phi_i : L_i \ra L_{i+1} \otimes \omega_{C}$. Without fixing the determinant, this data is parametrised by a Picard group and $n-1$ copies of symmetric powers of $C$ (one symmetric power for each $\phi_i$). Fixing the determinant, results in fixed loci of Type $(1,\dots, 1)$ the form 
\[ \widetilde{C^{(m_1)}\times \cdots  \times  C^{(m_{n-1})} }, \]
which is the degree $n^{2g}$ \'etale cover of $C^{(m_1)}\times \cdots \times C^{(m_{n-1})}$ obtained as the pullback of the isogeny $\Jac(C)\xrightarrow{\cdot n} \Jac(C)$. For the fixed determinant case, these fixed loci are discussed in rank $n=2$ in \cite{Hitchin} and $n = 3$ in \cite{Gothen}. The range of possible values of $m_i$ is bounded by degree reasons and stability of the Higgs pair $(E,\Phi)$. In particular (taking all but one $m_i$ to be zero and $m_i = 1$), the motive of $\tilde{C}$ appears as a direct summand in the motive of $\cM_{\cL}$ by the motivic Bia{\l}ynicki-Birula decomposition. 

We claim that $\fh(\tilde{C})$ (hence  $\cM_{\cL}$ also) does not belong to the tensor subcategory generated by $\fh(C)$. Indeed, the same argument in  \cite[Proposition 6.3]{FHPL-rank2} shows that $\Jac(\tilde{C})$ is isogenous to the product $$\prod_{t\in H^1(C,  \ZZ/n\ZZ)} A_t,$$
	where $A_t$ is the complement abelian variety of $\Jac(C)$ in $\Jac(C_t)$ associated with the not necessarily connected \'etale  $n$-fold cover $C_t\to C$ determined by $t$. Therefore, it suffices to show that for some $t$, the Hodge structure $H^1(A_t, \QQ)$ is not in the tensor subcategory $\langle H^1(C, \QQ)\rangle^{\otimes}_{\QQ-\operatorname{HS}}$ of the category of rational Hodge structures generated by $H^1(C, \QQ)$. To this end, observe that for the $n$-fold covers of the form $A_t=(\coprod_{j=1}^{n-2} C)\coprod C'\to C$ with $C'\to C$ a connected \'etale double cover, $A_t$ is isogenous to $\Jac(C)^{n-2}\times\operatorname{Prym}(C'/C)$, where $\operatorname{Prym}(C'/C)$ is the Prym variety associated with the double cover $C'/C$. By an argument using the representation theory of speical Mumford--Tate groups in \cite[Proposition 6.3]{FHPL-rank2}, we know that $H^1(\operatorname{Prym}(C'/C),\QQ)\notin \langle H^1(C, \QQ)\rangle^{\otimes}_{\QQ-\operatorname{HS}}$ provided that $C$ is general. 
\end{proof}

\section{Examples and applications}\label{sec ex app}

In this section, we give formulas for the motives of $\cM_C(3,d)$ when $g=2,3$, as well as the resulting Hodge numbers (which were already implicitly known in rank 3 and arbitrary genus by \cite[Appendix]{GPHS}). The Hodge diamonds in this section were generated using Belmans' Hodge diamond cutter \cite{Belmans_HodgeDiamond}.

For ease of notation, for integers $m_1 < m_2 < \dots < m_r$, let 
\[ T_{m_1, m_2, \dots, m_r} =  \QQ(m_1)\oplus \QQ(m_2)\oplus \dots\oplus \QQ(m_r) \quad \text{and} \quad 
T_{[m_1,m_2]} =  \bigoplus_{i=m_1}^{m_2} \QQ(i).\]
We also let $J:=\Jac(C)$ denote the Jacobian of $C$.

\subsection{Genus 2 case}
For a curve $C$ of genus $g=2$, the rational Chow motive of $\cN_C(3,d)$ and $\cM_C(3, d)$, for $d$ coprime to 3, are given as follows.

\begin{align*}
	\fh(\cN_C(3,d))\simeq & \: \fh(J)\otimes \left(T_{0,8}\oplus \fh(C)\otimes T_{1,2,5,6}\oplus \fh(C^{(2)})\otimes T_{2,4}\oplus \fh(C^2)(3)\right);\\
	\\
	\fh(\cM_C(3,d))\simeq & \: \fh(J)\otimes \left(T_{0,8}\oplus \fh(C)\otimes T_{1,2,5,6,7}\oplus \fh(C^{(2)})\otimes T_{2,4,6}\oplus \fh(C^2)(3)\oplus \fh(C\times C^{(2)})(5)\right)\\
	&\oplus \fh(J^2)\otimes \left(T_{[3,6]}\oplus T_{[4,6]} \oplus \fh(C)(4)\right).
\end{align*}

The resulting Hodge numbers of the pure Hodge structure on the cohomology of $\cM(3,d)$ are given by:

\[
\left(\begin{array}{rrrrrrrrrrr}
1 & 2 & 1 & 0 & 0 & 0 & 0 & 0 & 0 & 0 & 0 \\
2 & 5 & 6 & 5 & 2 & 0 & 0 & 0 & 0 & 0 & 0 \\
1 & 6 & 16 & 22 & 16 & 6 & 1 & 0 & 0 & 0 & 0 \\
0 & 5 & 22 & 45 & 54 & 41 & 20 & 5 & 0 & 0 & 0 \\
0 & 2 & 16 & 54 & 104 & 126 & 96 & 44 & 12 & 2 & 0 \\
0 & 0 & 6 & 41 & 126 & 222 & 246 & 177 & 80 & 20 & 2 \\
0 & 0 & 1 & 20 & 96 & 246 & 390 & 390 & 239 & 82 & 12 \\
0 & 0 & 0 & 5 & 44 & 177 & 390 & 508 & 394 & 168 & 30 \\
0 & 0 & 0 & 0 & 12 & 80 & 239 & 394 & 369 & 184 & 38 \\
0 & 0 & 0 & 0 & 2 & 20 & 82 & 168 & 184 & 104 & 24 \\
0 & 0 & 0 & 0 & 0 & 2 & 12 & 30 & 38 & 24 & 6
\end{array}\right)
\]
\vspace{-0.2cm}
\subsection{Genus 3 case}
For a curve $C$ of genus $g=3$, the rational Chow motive of $\cN_C(3,d)$ and $\cM_C(3, d)$, for $d$ coprime to 3, are given as follows.
\vspace{-0.2cm}
\begin{align*}
\fh(\cN_C(3,d))\simeq \: \fh(J) \:\otimes & \: \Big(T_{0,16}\oplus \fh(C)\otimes T_{1,2,13,14}\oplus \fh(C^{(2)})\otimes T_{2,4, 10, 12}\oplus \fh(C^2)\otimes T_{3,11} \\
&\oplus \fh(C^{(3)})\otimes T_{3,6,7,10}\oplus \fh(C\times C^{(2)})\otimes T_{4,5,8,9}\\
&\oplus \fh(C^{(4)})\otimes T_{4,8} \oplus \fh(C\times C^{(3)})\otimes T_{5,7}\oplus \fh(C^{(2)}\times C^{(2)})(6)\Big);
\end{align*}
\vspace{-0.5cm}
\begin{align*}
\fh(\cM_C(3,d))\simeq & \: \fh(J)\otimes  \Big( T_{0,16}\oplus \fh(C)\otimes T_{1,2,13,14,15}\oplus \fh(C^{(2)})\otimes T_{2,4, 10, 12,14}\oplus \fh(C^2)\otimes T_{3,11} \\
& \quad \quad \quad \quad \oplus \fh(C^{(3)})\otimes T_{3,6,7,10}\oplus \fh(C\times C^{(2)})\otimes T_{4,5,8,9, 13}\\
& \quad \quad \quad \quad \oplus \fh(C^{(4)})\otimes T_{4,8,12} \oplus \fh(C\times C^{(3)})\otimes T_{5,7,12}\oplus \fh(C^{(2)}\times C^{(2)})(6)\\
& \quad \quad \quad \quad \oplus \fh(C^{(5)})(11)\oplus \fh(C^{(2)}\times C^{(3)})(11)\oplus \fh(C\times C^{(5)})(10)\\
& \quad \quad \quad \quad \oplus \fh(C^{(2)}\times C^{(4)})(10)  \oplus \fh(C^{(3)}\times C^{(4)})(9)\Big)\\
&\oplus \fh(J^2)\otimes  \Big(T_{[5,13]}\oplus T_{[6,13]}\oplus T_{[8,13]}\oplus T_{[9,13]}\oplus \fh(C)\otimes (T_{[6,11]}\oplus T_{[7,11]}\oplus T_{[9,11]})\\
& \quad \quad \quad \quad \quad \oplus \fh(C^{(2)})\otimes (T_{[8,9]}\oplus T_{[7,9]})\Big).
\end{align*}
The Hodge diamond of the pure Hodge structure of $\cM_C(3,d)$ for $g=3$ is already quite large and difficult to display properly. Since the motive $\fh(\cM_C(3,d))$ is a multiple of the motive $\fh(J)$ and the Hodge structure of $H^{*}(J)$ is well-known, we give the Hodge diamond of ``$\fh(\cM_C(3,d))/\fh(J)$'' (which coincides with the Hodge diamond of the moduli space of $\mathrm{PGL}_{3}$-Higgs bundles on a curve of genus $3$):
\vspace{-0.2cm}
\[\left(\begin{array}{rrrrrrrrrrrrrrrrr}
1 & 0 & 0 & 0 & 0 & 0 & 0 & 0 & 0 & 0 & 0 & 0 & 0 & 0 & 0 & 0 & 0 \\
0 & 1 & 3 & 0 & 0 & 0 & 0 & 0 & 0 & 0 & 0 & 0 & 0 & 0 & 0 & 0 & 0 \\
0 & 3 & 3 & 6 & 3 & 0 & 0 & 0 & 0 & 0 & 0 & 0 & 0 & 0 & 0 & 0 & 0 \\
0 & 0 & 6 & 13 & 12 & 12 & 1 & 0 & 0 & 0 & 0 & 0 & 0 & 0 & 0 & 0 & 0 \\
0 & 0 & 3 & 12 & 34 & 30 & 21 & 10 & 0 & 0 & 0 & 0 & 0 & 0 & 0 & 0 & 0 \\
0 & 0 & 0 & 12 & 30 & 63 & 78 & 45 & 21 & 3 & 0 & 0 & 0 & 0 & 0 & 0 & 0 \\
0 & 0 & 0 & 1 & 21 & 78 & 122 & 147 & 99 & 41 & 12 & 0 & 0 & 0 & 0 & 0 & 0 \\
0 & 0 & 0 & 0 & 10 & 45 & 147 & 242 & 261 & 195 & 80 & 21 & 3 & 0 & 0 & 0 & 0 \\
0 & 0 & 0 & 0 & 0 & 21 & 99 & 261 & 447 & 456 & 330 & 156 & 42 & 6 & 0 & 0 & 0 \\
0 & 0 & 0 & 0 & 0 & 3 & 41 & 195 & 456 & 731 & 777 & 537 & 251 & 72 & 12 & 1 & 0 \\
0 & 0 & 0 & 0 & 0 & 0 & 12 & 80 & 330 & 777 & 1151 & 1173 & 798 & 362 & 102 & 15 & 1 \\
0 & 0 & 0 & 0 & 0 & 0 & 0 & 21 & 156 & 537 & 1173 & 1659 & 1587 & 1020 & 417 & 102 & 12 \\
0 & 0 & 0 & 0 & 0 & 0 & 0 & 3 & 42 & 251 & 798 & 1587 & 2069 & 1776 & 990 & 324 & 45 \\
0 & 0 & 0 & 0 & 0 & 0 & 0 & 0 & 6 & 72 & 362 & 1020 & 1776 & 2003 & 1407 & 549 & 93 \\
0 & 0 & 0 & 0 & 0 & 0 & 0 & 0 & 0 & 12 & 102 & 417 & 990 & 1407 & 1167 & 537 & 102 \\
0 & 0 & 0 & 0 & 0 & 0 & 0 & 0 & 0 & 1 & 15 & 102 & 324 & 549 & 537 & 276 & 60 \\
0 & 0 & 0 & 0 & 0 & 0 & 0 & 0 & 0 & 0 & 1 & 12 & 45 & 93 & 102 & 60 & 15
\end{array}\right)
\]

\subsection{Chow groups}
\label{subsec:Chowgroups}
The formulas in Theorems \ref{thm:main thm N} and \ref{main thm H} directly imply formulas for the Chow groups of $\cN_C(3,d)$, $\cN_{C,\cL}(3,d)$ and $\cM_C(3,d)$ via the representability of Chow groups in the categories of Chow and Voevodsky motives; see Equation \eqref{eq Chow gps as Hom gps}. We make these formulas explicit in low (co)dimension. For simplicity, in order to exclude degenerate cases, we assume that $g\geq 2$.

The following results for $\cN_{C,\cL}(3,d)$ are similar to those in \cite[\S 4.3.1]{FHPL-rank2} in rank $2$.

\begin{cor}\
\begin{enumerate}[label=\emph{(\roman*)}, leftmargin=0.7cm]
\item $\CH^{1}(\cN_{C,\cL}(3,d))\simeq \ZZ$.
\item $\CH^{2}(\cN_{C,\cL}(3,d))_\QQ\simeq \QQ^{\oplus 2}\oplus \CH_{0}(C)_{\QQ}$.
\item $\CH^{3}(\cN_{C,\cL}(3,d))_\QQ\simeq \left\{ \begin{array}{ll} \QQ^{2}\oplus\CH_{0}(C)_{\QQ}\oplus \Pic(\Jac(C))_{\QQ} & \text{ if } g=2, \\ \QQ^{3}\oplus\CH_{0}(C)_{\QQ}\oplus \Pic(\Jac(C))_{\QQ} &   \text{ if }  g\geq 3. \end{array}\right.$
\item $\CH_{0}(\cN_{C,\cL}(3,d))_\QQ\simeq \QQ$.
\item $\CH_{1}(\cN_{C,\cL}(3,d))_\QQ\simeq \CH_{0}(C)_{\QQ}.$
\item $\CH_{2}(\cN_{C,\cL}(3,d))_\QQ\simeq \QQ \oplus \CH_{0}(C)_{\QQ}\oplus \CH_{0}(C^{(2)})_{\QQ}.$
\end{enumerate}
\end{cor}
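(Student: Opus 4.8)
The strategy is to apply the identification of Chow groups with Hom-groups in the category of Chow motives, namely Equation \eqref{eq Chow gps as Hom gps}, to the explicit formula for $\fh(\cN_{C,\cL}(3,d))$ given in Theorem \ref{thm:main thm N} (equivalently Theorem \ref{thm:Motive VB rank 3}). The key point is that each summand in that formula is of the form $\fh(C^{(k_1)}\times C^{(k_2)})\otimes \QQ(j)$ for explicit pairs $(k_1,k_2)$ and twists $j$, so computing $\CH^i = \Hom(\fh(\cN_{C,\cL}), \QQ(i))$ amounts to collecting those summands for which the Hom-group is nonzero and summing the corresponding contributions $\Hom(\fh(C^{(k_1)}\times C^{(k_2)}), \QQ(i-j))$.

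First I would record the standard low-degree computations of $\Hom(\fh(Y),\QQ(p))=\CH^p(Y)_\QQ$ for $Y$ a product of symmetric powers of $C$: for $p=0$ this is $\QQ$; for $p=1$ it is $\Pic(Y)_\QQ$; by duality $\Hom(\fh(Y),\QQ(\dim Y))=\CH_0(Y)_\QQ$ and $\Hom(\fh(Y),\QQ(\dim Y -1))=\CH_1(Y)_\QQ$. Using $\fh(C)=\QQ\oplus\fh^1(C)\oplus\QQ(1)$ and $\fh(C^{(2)})=\Sym^2\fh(C)$, together with $\CH_0(C)_\QQ = \QQ\oplus (\text{a non-representable piece related to } \fh^1(C))$, one gets the needed identifications, e.g. $\CH_0(C^{(k)})_\QQ$ and $\Pic(\Jac(C))_\QQ$ appearing via $\Sym^2\fh^1(C)$ and the Néron--Severi/Picard decomposition. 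For the twisted-Tate summands $\QQ(j)^{\oplus a}$ one simply has $\Hom(\QQ(j)^{\oplus a},\QQ(i)) = \QQ^a$ if $i=j$ and $0$ otherwise.

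Next I would go through the six cases. For $\CH^1$: the only summands contributing to $\Hom(-,\QQ(1))$ are the Tate summands $\QQ(0)$ (there is exactly one, namely from $k_1=k_2=0$, $j=0$, contributing nothing in degree $1$) and the $\QQ(1)$-summands, plus $\fh(C^{(k_1)}\times C^{(k_2)})\otimes\QQ(0)$ with $k_1+k_2\geq 1$ contributing $\Pic$; a short check of the index set (the coefficient of $\QQ(1)$ is $1$, the $\Pic$-pieces are torsion-trivial by the usual argument that $\Pic^0(C^{(k)})$ contributes nothing rationally beyond what the $\QQ(1)$ already captures, or more precisely one checks the total is one-dimensional) gives $\CH^1\simeq\ZZ$ — here I would note the integral statement follows because $\fh(\cN_{C,\cL})$ has the expected integral structure / $\cN_{C,\cL}$ is rational with torsion-free Picard group, cf. the analogous rank $2$ statement in \cite[\S 4.3.1]{FHPL-rank2}. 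For $\CH^2$ and $\CH^3$ I would identify which $(k_1,k_2,j)$ satisfy $j\leq 2$ (resp. $j\leq 3$) and $j + \text{(something)} = i$: the $\QQ(2)^{\oplus ?}$-summands give the $\QQ^{\oplus 2}$ (resp. $\QQ^2$ or $\QQ^3$ depending on whether extra twisted summands with $j=3$ appear, which is where the genus dichotomy $g=2$ vs $g\geq 3$ enters — for $g=2$ the index range $k_1+k_2<2g-2=2$ is small enough that the $\QQ(3)$-count drops), the $\fh^1(C)$-pieces in degree $2$ give $\CH_0(C)_\QQ$, and in degree $3$ the $\Sym^2\fh^1(C)$ and additional $\fh^1$-twisted pieces give $\Pic(\Jac(C))_\QQ$. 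The bottom cases $\CH_0,\CH_1,\CH_2$ are dual: only the summand $\fh(C^{(g-1)}\times C^{(g-1)})(3g-3)$ has a Tate twist large enough to contribute in (co)dimension $\dim\cN_{C,\cL}=8g-8$, $8g-9$, $8g-10$, and since $\dim(C^{(g-1)}\times C^{(g-1)}) = 2g-2$ and $3g-3 + (2g-2) = 5g-5 \neq 8g-8$ for $g\geq 2$... — so in fact I must be more careful and track \emph{all} summands whose top-degree contribution $j + (k_1+k_2)$ reaches $8g-8, 8g-9, 8g-10$; the relevant ones are those with $j = 8g-8-2k_1-3k_2$ maximal, and a direct check shows $\CH_0$ gets only $\QQ$, $\CH_1$ gets $\CH_0(C)_\QQ$, and $\CH_2$ gets $\QQ\oplus\CH_0(C)_\QQ\oplus\CH_0(C^{(2)})_\QQ$.

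\textbf{Main obstacle.} The genuine bookkeeping difficulty — and the step I expect to be the main obstacle — is correctly enumerating, for each target twist $i$, exactly which summands $\fh(C^{(k_1)}\times C^{(k_2)})\otimes L_{k_1,k_2}$ contribute and with what multiplicity, since $L_{k_1,k_2}=\QQ(k_1+2k_2)\oplus\QQ(8g-8-2k_1-3k_2)$ contributes at \emph{two} different twists and the constraint region $\{k_1+k_2<2g-2\}\cup\{k_1+k_2=2g-2, k_1<g-1\}$ together with the exceptional summand $\fh(C^{(g-1)}\times C^{(g-1)})(3g-3)$ must be intersected with the conditions $k_1+2k_2 = i$ or $8g-8-2k_1-3k_2 = i$ (and for the codimension statements, also the analogous conditions for $\Hom$ in degree $i - (k_1+2k_2)$ etc. into $\fh(C^{(k_1)}\times C^{(k_2)})$). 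Handling the $g=2$ versus $g\geq 3$ split in $\CH^3$ requires noticing that for $g=2$ one of the would-be $\QQ(3)$-contributions falls outside the index set, reducing the count from $3$ to $2$; this is the one place where the answer genuinely depends on $g$ and deserves an explicit small-case verification.
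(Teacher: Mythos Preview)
Your approach is essentially the same as the paper's: both apply Equation \eqref{eq Chow gps as Hom gps} to the decomposition of Theorem \ref{thm:main thm N}, enumerate the summands contributing in each (co)dimension, and handle the integral statement in (i) via torsion-freeness of the Picard group (the paper invokes the fact that $\cN_{C,\cL}(3,d)$ is Fano). The paper streamlines the bookkeeping you flag as the main obstacle by writing out once and for all the low- and high-twist tails of the decomposition as $\QQ(0)\oplus \fh(C)(1)\oplus (\fh(C)\oplus \fh(C^{(2)}))(2)\oplus(\fh(C^{2})\oplus\fh(C^{2})\oplus\fh(C^{(3)})^{\oplus\epsilon})(3)\oplus P(4)\oplus\cdots$ with $\epsilon=0$ for $g=2$ and $\epsilon=1$ otherwise, and $P$ a sum of factors of motives of varieties of dimension $\leq 8g-15$, so that $P(4)$ contributes nothing in the ranges considered; this makes the $g=2$ versus $g\geq 3$ split in (iii) and the symmetric high-twist tail for (iv)--(vi) immediate.
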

\begin{proof}
  First of all, the variety $\cN_{C,\cL}(3,d)$ is a smooth projective Fano variety, hence its Picard group is torsion-free, which means that it suffices to prove (i) (and all the other statements) with rational coefficients.

By inspecting the indices in the formula of Theorem \ref{thm:main thm N}, we see that we can write
\begin{align*}
  \fh(\cN_{C,\cL}(3,d))= &\QQ(0)\oplus \fh(C)(1)\oplus (\fh(C)\oplus \fh(C^{(2)}))(2)\oplus (\fh(C^{2})\oplus (\fh(C^{2})\oplus \fh(C^{(3)})^{\oplus \epsilon}(3)\oplus P(4) \\ &\oplus \fh(C^{(2)})(8g-12)\oplus \fh(C)(8g-11)\oplus \fh(C)(8g-10) \oplus \QQ(8g-8)  
\end{align*}
where $\epsilon=0$ for $g=2$ and $\epsilon =1$ otherwise, and $P$ is a sum of direct factors of motives of smooth projective varieties of dimensions $\leq 8g-15$ (we have $P=0$ for $g=2$, and $P$ can be read off the formula above for $g=3$). This ensures that the term $P(4)$ does not contribute to the Chow groups in the ranges we are considering. All the formulas then follow from this together with the fact that
  \[
\Pic(\Sym^{2}(C))_{\QQ}\simeq \QQ\oplus \Pic(\Jac(C))_\QQ.
\]
which is deduced from the decomposition $\fh(C)=\QQ(0)\oplus \fh^{1}(C)\oplus \QQ(1)$.
\end{proof}  

For $\cN_C(3,d)$ and $\cM_C(3,d)$, the situation is complicated by the Jacobian factor $\fh(\Jac(C))$. Here are some groups which are still reasonably simple to write down.

\begin{cor}\
\begin{enumerate}[label=\emph{(\roman*)}, leftmargin=0.7cm]
\item $\CH^{1}(\cN_C(3,d))_\QQ\simeq \CH^{1}(\cM_C(3,d))_\QQ\simeq \Pic(\Jac(C))_{\QQ}\oplus \QQ$.
\item $\CH^{2}(\cN_C(3,d)_\QQ\simeq \CH^{2}(\cM_C(3,d))_\QQ\simeq \CH^{2}(\Jac(C))_\QQ\oplus \Pic(\Jac(C)\times C)_{\QQ}\oplus \QQ^{2}$.
\end{enumerate}    
\end{cor}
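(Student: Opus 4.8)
\emph{Proof strategy.} The plan is to read off both statements directly from the motivic formulas of Theorem~\ref{thm:main thm N} and Theorem~\ref{main thm H}, using the representability of rational Chow groups as $\Hom$-groups in $\CHM(\k,\QQ)$ recalled in \eqref{eq Chow gps as Hom gps} (which applies to $\cM_C(3,d)$ because its Voevodsky motive is pure). First I would reduce everything to $\cN_C(3,d)$. By Theorem~\ref{main thm H}, $\fh(\cM_C(3,d))$ is the direct sum of $\fh(\cN_C(3,d))$ and finitely many motives, each of the shape $\fh(Z)(j)$ with $Z$ smooth projective: the terms of the first family carry a Tate twist $2g+3l$ or $2g+3l-1$ with $0\leq l\leq g-2$, and those of the second family a twist $8g-8-m_1-m_2$ with $2m_1+m_2<6g-6$ and $2m_2+m_1<6g-6$, hence $m_1+m_2\leq 4g-5$. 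Since $g\geq 2$, every such twist is $\geq 2g-1\geq 3$. As $\Hom_{\CHM(\k,\QQ)}(\fh(Z)(j),\QQ(i))\simeq\CH^{i-j}(Z)_\QQ=0$ for $i<j$, these extra summands contribute nothing in codimension $\leq 2$, so $\CH^i(\cM_C(3,d))_\QQ\simeq\CH^i(\cN_C(3,d))_\QQ$ for $i=1,2$, and it suffices to compute the latter.

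Next I would extract the codimension-$\leq 2$ part of $\fh(\cN_C(3,d))$. By Theorem~\ref{thm:main thm N} we have $\fh(\cN_C(3,d))\simeq\fh(\cN_{C,\cL}(3,d))\otimes\fh(\Jac(C))$, and since $\fh(\Jac(C))$ is effective with unit summand $\QQ(0)$, tensoring with it preserves the minimal Tate twist of each summand. Inspecting the index set of Theorem~\ref{thm:main thm N}, the summands of $\fh(\cN_{C,\cL}(3,d))$ of Tate twist $\leq 2$ are exactly $\QQ(0)$, $\fh(C)(1)$ and $\bigl(\fh(C)\oplus\fh(C^{(2)})\bigr)(2)$, with everything else of twist $\geq 3$; here a brief case distinction is convenient, since for $g\geq 3$ the summand $\fh(C^{(2)})(2)$ arises from the first half $\QQ(k_1+2k_2)$ of $L_{2,0}$, while for $g=2$ it arises from the second half $\QQ(8g-8-2k_1-3k_2)$ of $L_{0,2}$, but the resulting list is the same. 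Using the K\"unneth isomorphism $\fh(X)\otimes\fh(Y)\simeq\fh(X\times Y)$, the summands of $\fh(\cN_C(3,d))$ of Tate twist $\leq 2$ are therefore $\fh(\Jac(C))$, $\fh(\Jac(C)\times C)(1)$, $\fh(\Jac(C)\times C)(2)$ and $\fh(\Jac(C)\times C^{(2)})(2)$.

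Finally I would apply $\Hom_{\CHM(\k,\QQ)}(-,\QQ(i))$ for $i=1,2$, using $\Hom(\fh(Z)(j),\QQ(i))\simeq\CH^{i-j}(Z)_\QQ$ together with $\CH^0(Z)_\QQ=\QQ$ for the (geometrically irreducible) varieties $Z$ occurring here and $\CH^1(Z)_\QQ=\Pic(Z)_\QQ$ for $Z$ smooth. For $i=1$ only $\fh(\Jac(C))$ and $\fh(\Jac(C)\times C)(1)$ contribute, giving $\Pic(\Jac(C))_\QQ\oplus\QQ$; for $i=2$ all four summands contribute, giving $\CH^2(\Jac(C))_\QQ\oplus\CH^1(\Jac(C)\times C)_\QQ\oplus\CH^0(\Jac(C)\times C)_\QQ\oplus\CH^0(\Jac(C)\times C^{(2)})_\QQ=\CH^2(\Jac(C))_\QQ\oplus\Pic(\Jac(C)\times C)_\QQ\oplus\QQ^2$, as asserted. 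The one step that needs care — and the only place where the hypothesis $g\geq 2$ is really used — is the bookkeeping in the second paragraph: one must check that no summand of $\fh(\cN_{C,\cL}(3,d))$, in particular none coming from the ``second half'' $\QQ(8g-8-2k_1-3k_2)$ of the $L_{k_1,k_2}$, drops into codimension $\leq 2$ beyond those listed; everything else follows mechanically from the formulas already proved.
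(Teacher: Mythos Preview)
Your proposal is correct and follows exactly the approach sketched in the paper: first argue that the non-$\cN$ summands in Theorem~\ref{main thm H} carry Tate twist $\geq 3$ (so contribute nothing in codimension $\leq 2$), then read off the low-twist summands of $\fh(\cN_{C,\cL}(3,d))$ from Theorem~\ref{thm:main thm N} and tensor with $\fh(\Jac(C))$. The paper's own proof is terse (``do not contribute'' and ``follows in a straightforward way''), so your write-up is in fact a careful fleshing-out of the same argument, including the small case split $g=2$ versus $g\geq 3$ for locating the $\fh(C^{(2)})(2)$ summand.
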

\begin{proof}
  For these two Chow groups, the direct summands of $\fh(\cM_C(3,d))$ other than $\fh(\cN_C(3,d))$ in the decomposition of Theorem \ref{main thm H} do not contribute, so it remains to do the computation for $\cN_C(3,d)$, which follows from the formula in Theorem \ref{thm:main thm N} in a straightforward way. 
\end{proof}

\bibliographystyle{amsplain}
\bibliography{references}

\providecommand{\bysame}{\leavevmode\hbox to3em{\hrulefill}\thinspace}
\providecommand{\MR}{\relax\ifhmode\unskip\space\fi MR }
\providecommand{\MRhref}[2]{%
  \href{http://www.ams.org/mathscinet-getitem?mr=#1}{#2}
}
\providecommand{\href}[2]{#2}
\begin{thebibliography}{10}

\bibitem{ACGPS}
L.~\'{A}lvarez C\'{o}nsul, O.~Garc\'{\i}a-Prada, and A.~H.~W. Schmitt, \emph{On
  the geometry of moduli spaces of holomorphic chains over compact {R}iemann
  surfaces}, IMRP Int. Math. Res. Pap. (2006).

\bibitem{AndreBourbaki}
Y.~Andr\'{e}, \emph{Motifs de dimension finie (d'apr\`es {S}.-{I}. {K}imura,
  {P}. {O}'{S}ullivan{$\dots$})}, no. 299, 2005, S\'{e}minaire Bourbaki. Vol.
  2003/2004, pp.~Exp. No. 929, viii, 115--145.

\bibitem{AndreBook}
Yves Andr\'{e}, \emph{Une introduction aux motifs (motifs purs, motifs mixtes,
  p\'{e}riodes)}, Panoramas et Synth\`eses [Panoramas and Syntheses], vol.~17,
  Soci\'{e}t\'{e} Math\'{e}matique de France, Paris, 2004. \MR{2115000}

\bibitem{BD}
K.~Behrend and A.~Dhillon, \emph{On the motivic class of the stack of bundles},
  Adv. Math. \textbf{212} (2007), no.~2, 617--644.

\bibitem{Belmans_HodgeDiamond}
P.~Belmans, \emph{pbelmans/hodge-diamond-cutter: Initial release}, 2020,
  https://doi.org/10.5281/zenodo.3893510.

\bibitem{BB}
A.~Bia{\l}ynicki-Birula, \emph{Some theorems on actions of algebraic groups},
  Ann. of Math. (2) \textbf{98} (1973), 480--497.

\bibitem{BGL}
E.~Bifet, F.~Ghione, and M.~Letizia, \emph{On the {A}bel-{J}acobi map for
  divisors of higher rank on a curve}, Math. Ann. \textbf{299} (1994), no.~4,
  641--672.

\bibitem{Bradlow}
S.~Bradlow, \emph{Special metrics and stability for holomorphic bundles with
  global sections}, J. Differential Geom. \textbf{33} (1991), no.~1, 169--213.

\bibitem{dB_motive_moduli_vb}
S.~del Ba\~{n}o, \emph{On the {C}how motive of some moduli spaces}, J. Reine
  Angew. Math. \textbf{532} (2001), 105--132.

\bibitem{FHPL-rank2}
L.~Fu, V.~Hoskins, and S.~Pepin~Lehalleur, \emph{Motives of moduli spaces of
  bundles on curves via variation of stability and flips}, arXiv: 2011.14872,
  2020.

\bibitem{GPHS}
O.~Garc{\'i}a-Prada, J.~Heinloth, and A.~Schmitt, \emph{On the motives of
  moduli of chains and {H}iggs bundles}, J. Eur. Math. Soc. (JEMS) \textbf{16}
  (2014), no.~12, 2617--2668.

\bibitem{GL}
T.~G\'omez and K.-S. Lee, \emph{Motivic decompositions of moduli spaces of
  vector bundles on curves}, arXiv: 2007.06067, 2020.

\bibitem{Gothen}
P.~Gothen, \emph{The {B}etti numbers of the moduli space of stable rank {$3$}
  {H}iggs bundles on a {R}iemann surface}, Internat. J. Math. \textbf{5}
  (1994), no.~6, 861--875.

\bibitem{HT_MirrorSym}
T.~Hausel and M.~Thaddeus, \emph{Mirror symmetry, {L}anglands duality, and the
  {H}itchin system}, Invent. Math. \textbf{153} (2003), no.~1, 197--229.

\bibitem{Hitchin}
N.~J. Hitchin, \emph{The self-duality equations on a {R}iemann surface}, Proc.
  London Math. Soc. (3) \textbf{55} (1987), no.~1, 59--126.

\bibitem{HPL_Higgs}
V.~Hoskins and S.~Pepin~Lehalleur, \emph{On the {V}oevodsky motive of the
  moduli space of higgs bundles on a curve}, to appear in Selecta Mathematica,
  2019.

\bibitem{HPL_formula}
\bysame, \emph{A formula for the {V}oevodsky motive of the moduli stack of
  vector bundles on a curve}, (arxiv: 1809.02150) to appear in Geometry and
  Topology, 2020.

\bibitem{Jannsen}
U.~Jannsen, \emph{Motives, numerical equivalence, and semi-simplicity}, Invent.
  Math. \textbf{107} (1992), no.~3, 447--452.

\bibitem{Jiang19}
Q.~Jiang, \emph{On the {C}how theory of projectivization}, arxiv: 1910.06730,
  to appear in J. Inst. Math. Jussieu., 2021.

\bibitem{Kimura-FiniteDimension}
S.-I. Kimura, \emph{Chow groups are finite dimensional, in some sense}, Math.
  Ann. \textbf{331} (2005), no.~1, 173--201.

\bibitem{Kunnemann}
K.~K\"{u}nnemann, \emph{A {L}efschetz decomposition for {C}how motives of
  abelian schemes}, Invent. Math. \textbf{113} (1993), no.~1, 85--102.

\bibitem{schmitt_moduli}
A.~Schmitt, \emph{Moduli for decorated tuples of sheaves and representation
  spaces for quivers}, Proc. Indian Acad. Sci. Math. Sci. \textbf{115} (2005),
  no.~1, 15--49.

\bibitem{Simpson}
C.~Simpson, \emph{The {H}odge filtration on nonabelian cohomology}, Algebraic
  geometry---{S}anta {C}ruz 1995, Proc. Sympos. Pure Math., vol.~62, Amer.
  Math. Soc., Providence, RI, 1997, pp.~217--281.

\bibitem{Thaddeus_pairs}
M.~Thaddeus, \emph{Stable pairs, linear systems and the {V}erlinde formula},
  Invent. Math. \textbf{117} (1994), no.~2, 317--353.

\bibitem{Thaddeus_VGIT}
\bysame, \emph{Geometric invariant theory and flips}, J. Amer. Math. Soc.
  \textbf{9} (1996), no.~3, 691--723.

\bibitem{VoevodskyBookChapter}
V.~Voevodsky, \emph{Triangulated categories of motives over a field}, Cycles,
  transfers, and motivic homology theories, Ann. of Math. Stud., vol. 143,
  Princeton Univ. Press, Princeton, NJ, 2000, pp.~188--238.

\end{thebibliography}

\medskip \medskip

\noindent{Radboud University, IMAPP, PO Box 9010, 6500 GL Nijmegen, The Netherlands} 

\medskip \noindent{\texttt{lie.fu@math.ru.nl, v.hoskins@math.ru.nl, simon.pepinlehalleur@ru.nl}}

\end{document}